\numberwithin{equation}{section}
\newtheorem {Theorem}{Theorem}
\numberwithin{Theorem}{section}
\newtheorem {Lemma}[Theorem] {Lemma}
\newtheorem {Claim}[Theorem]{Claim}
\newtheorem {Proposition}[Theorem]{Proposition}
\newtheorem {Corollary}[Theorem]{Corollary}
\newtheorem{Remark}[Theorem]{Remark}
\tikzset{
    labl/.style={anchor=south, rotate=30, inner sep=1mm}
}
\tikzset{
    labbl/.style={anchor=south, rotate=330, inner sep=1mm}
}
\title[Local Symplectic Homology of Reeb Orbits]{Local Symplectic Homology of Reeb Orbits}
\author[Elijah Fender]{Elijah Fender}
\address{EF: Department of Mathematics, UC Santa Cruz, Santa Cruz, CA
  95064, USA} \email{fender@ucsc.edu}
\keywords{Local symplectic homology, local Floer homology,  Reeb orbit}
\begin{document}
%\bigskip

\begin{abstract}

In this paper we prove two isomorphisms in the local symplectic homology of a non iterated, isolated Reeb orbit. The isomorphisms are in $S^1$-equivariant and nonequivariant symplectic homology relating the local Floer homology group of the orbit to that of the return map. The $S^1$-equivariant symplectic homology isomorphism  can be stated succinctly as the local $S^1$-equivariant symplectic homology of an uniterated isolated Reeb orbit is isomorphic to the local Hamiltonian Floer homology of the return map. An  application of this result gives the equivalence of two different definitions of a Reeb orbit being a symplectically degenerate maximum.\\*

\end{abstract}

\maketitle

% \bigskip

\tableofcontents

%\newpage

\section{Introduction and main results}
\label{sec:intro and results}

\subsection{Introduction}
\label{sec:intro}

The goal of this paper is to prove two isomorphisms in the local symplectic homology of a simple, which is to say non iterated, isolated Reeb orbit. The isomorphisms are in $S^1$-equivariant and nonequivariant symplectic homology relating the local Floer homology group of the orbit to that of the return map. The isomorphism we prove in $S^1$-equivariant symplectic homology can be stated succinctly as the local $S^1$-equivariant symplectic homology of a simple isolated Reeb orbit is isomorphic to the local Hamiltonian Floer homology of the return map. An immediate application of this result is the equivalence of two different definitions of a Reeb orbit being a symplectically degenerate maximum which we expand upon in Section \ref{sec:SDM}.\\

Results of this type are of interest for their role in proving the existence and multiplicity of Reeb orbits. The $S^1$-equivariant side of this paper can be viewed as a step in proving the following statement. Let $x=y^k$ be the k-th iteration of a simple Reeb orbit $y$ and let $\phi$ be the return map associated to the simple orbit $y$; then conjecturally \\*

\[
\begin{tikzcd}
SH^{S^1}(x) \arrow[dr,"\cong" labbl] \arrow[rr,"\cong"] && HF(\phi)^{\mathbb Z_k}\\
 & HC(x) \arrow[ur,"\cong" labl] 
\end{tikzcd}
\]

 \noindent where every arrow in this diagram is an isomorphism. Here $SH^{S^1}(x)$ denotes the $S^1$-equivariant local symplectic homology of $x$, $HC(x)$ denotes the local contact homology of $x$, and $HF(\phi)^{\mathbb Z_k}$ is the $\mathbb Z_k$-invariant part of the local Floer homology of $\phi$ with respect to the natural $\mathbb Z_k$ action. Note that there are serious conceptual issues with defining $HC(x)$ when $k>1$.\\
 
 In this paper Theorem \ref{thm:iso2} proves the simple case, that is when $k=1$, of the isomorphism $SH^{S^1}(x) \cong HF(\phi)$ without passing through contact homology. The corresponding simple case for contact homology, that is $HC(x)\cong HF(\phi)$, was first proved by Hryniewicz and Macarini in \cite{HM:contacthom}. Local contact homology was, in fact, defined in \cite{HM:contacthom}. In their paper, Hryniewicz and Macarini lay some of the ground work we build upon here as they explicitly give a local model for a neighborhood of a simple Reeb orbit which we use as the starting point of our computations. They also show that the dimension of Hamiltonian Floer homology of the return map gives an upper bound for the rank of the local contact homology of an iterated Reeb orbit, when it is defined. This work is continued in \cite{GHHM}, where Ginzburg, Hein, Hryniewicz, and Macarini conjectured that in fact for an iterated orbit we have $HC(x) \cong HF(\phi)^{\mathbb Z_k}$. They apply this to show that the existence of a Reeb orbit whose return map is a symplectically degenerate maximum guarantees the existence of infinitely many Reeb orbits. The results proved in \cite{GHHM} are further expanded upon In \cite{GGM}, where Ginzburg, G{\"u}rel and Macarini prove a Conley conjecture result for certain classes of contact manifolds using the conjectured isomorphism.\\

Several similar results in local homology have been proven to date.  Local symplectic homology was defined in \cite{GG:convex}, where Ginzburg and G{\"u}rel develop a Lusternik–Schnirelmann theory for equivariant symplectic homology. Finally in \cite{HHM:local Morse}, Hein, Hryniewicz, and Macarini show the transversality results which are necessary to define a local Morse complex with finite cyclic group symmetries. They apply this to the case of contact geometry in order to define a local contact homology with discrete action functionals. \\

On the nonequivariant side Theorem \ref{thm:iso1} can be seen as refining the simple case of a result proved by McLean in Lemma 3.4 from his paper \cite{Mc}. Here McLean proved an inequality involving rank of the symplectic homology of a Reeb orbit and the local Hamiltonian Floer homology of the return map direct summed with itself with a shift in degree. We show that the homology groups in question are in fact isomorphic when the orbit is simple.\\

\subsection{Results}
\label{sec:results}

In this section we introduce the setting of the problem and then present the main results proved in this paper. Let $(Y,\alpha)$ be a contact manifold. This is a manifold of dimension $2n+1$ with a one-form $\alpha$ such that $\alpha \wedge (d\alpha)^n\neq 0$. The Reeb vector field of $\alpha$ is a vector field $R$ such that $\iota_R\alpha =1$ and $\iota_Rd\alpha =0$. We say that $\psi^t$ is the Reeb flow of $\alpha$ on $Y$ if $\frac{\partial}{\partial t} \psi^t(p) = R_{\psi^t(p)}$ for $p \in Y$. \\

A Reeb trajectory $\gamma: \mathbb R \to Y$,$\gamma(t) =\psi^t(p)$ for $ p\in Y$ is a $T$-periodic closed Reeb orbit if $\gamma (t+T) = \gamma(t)$. We may refer to a closed Reeb orbit as simply a Reeb orbit. We say $\gamma$ is a {\it simple} $T$-periodic closed Reeb orbit if $\gamma$ is injective on the interval $[0,T)$. In particular, when we say a simple Reeb orbit, we mean an uniterated Reeb orbit. We say a $T$-periodic closed Reeb orbit is {\it isolated} if $\gamma$ is a period-$T$ closed Reeb orbit and there is some open neighborhood $\Sigma = S^1\times B^{2n}$ of $\gamma$ in $Y$, where $\gamma=S^1\times 0$, and $\Sigma$ contains no closed Reeb orbits of period-$t$ where $t\in (T-\eta, T+\eta)$, for some $\eta>0$. Note that we cannot exclude all possible periods here because traversing $\gamma$ twice, that is the orbit $\gamma^2$,  is another closed Reeb orbit of period $2T$ that lies in $\Sigma$. Furthermore, an arbitrary Reeb orbit or its iterates might not be isolated from other closed orbits broadly speaking. From this point forward we will consider $\gamma$ to be an isolated simple closed Reeb orbit unless otherwise specified.\\

The simple Reeb orbit $\gamma$ has several homology groups which one can associate with it. In this paper we explore the relationship between two specific types of associated homology groups, and what implication this relationship has for the dynamics of $\alpha$. The two types of homology groups associated to $\gamma $ in which we are interested are the symplectic homology of $\gamma$ and the local Hamiltonian Floer homology of the return map of the Reeb flow restricted to a slice in the trivialization $\Sigma =S^1 \times B^{2n}$.\\

Included in the choice for $\Sigma$ we also fixed a trivialization $S^1\times B^{2n}$. On this trivialization, the two-form $d\alpha$ restricts to a symplectic form on any slice $\theta \times B^{2n}$. Let $\phi$ be the return map obtained by following the Reeb flow from a small ball in the slice $0\times B^{2n}$ to a slightly larger ball in that same slice. Then $\phi$ is necessarily the germ of a symplectomorphism of $(B^{2n},d\alpha|_{0 \times B^{2n}})$ fixing the origin, which is discussed in \cite{HM:contacthom} as well as \cite{GG:gap} in Remark 4.1.\\

The fact that $\gamma$ is an isolated Reeb orbit is equivalent to $\phi$ having an isolated fixed point at the origin, this fact will be elaborated upon in Remark \ref{H fixes origin}. As a result, we know that $\phi$ has a  local Hamiltonian Floer homology defined in a neighborhood of the origin. Furthermore, if we were to look at the return map for any other slice $\theta_0 \times B^{2n}$, it would be a Hamiltonian diffeomorphism which would be conjugate to $\phi$, and therefore it would compute the same local Hamiltonian Floer homology on $B^{2n}$. So we can unambiguously (up to a choice of isotopy) associate the Hamiltonian Floer homology of $\phi$ with the orbit $\gamma$.\\

On the other hand, we may use a different Hamiltonian Floer Homology to study the dynamics near $\gamma$. We can produce a symplectization $W=(1-\delta,1+\delta) \times \Sigma$ of $\Sigma$ with coordinates $(r,\theta, z)$ and the symplectic form $\omega=d(r\alpha)$. Next, take a Hamiltonian $f(r)$ on $W$ which only depends on $r$ and has the following properties: $f(1)=f'(1)=1, f''>0$. With these conditions assumed for $f$ one can show, as we do in Lemma \ref{thm:one_periodic_are_reeb}, that the fixed points of the time-one flow of $f$ with respect to $\omega$ are exactly those in the set $1\times S^1\times 0$ which corresponds to $\gamma$. Thus one can also compute the local Hamiltonian Floer homology of this flow, which we call $\phi_{\omega, f}$, and produce a homology group associated with $\gamma$.\\

We say that the symplectic homology of $\gamma$ is the local Hamiltonian Floer homology of $\phi_{\omega, f}$ and the $S^1-$equivariant symplectic homology is the  $S^1-$equivariant local Hamiltonian Floer homology of $\phi_{\omega, f}$. The advantage of the latter is that it produces a homology which has exactly one generator per simple orbit, which is discussed in \cite{GG:gap}. This is helpful when one's goal is counting Reeb orbits via these homology groups, as is the case when proving results about the multiplicity of Reeb orbits, as is the case in \cite{AGKM,GM,GS,GG:convex}.\\

In this paper we prove two isomorphisms which show that the symplectic homology of $\gamma$ is entirely determined by the local Hamiltonian Floer homology of the return map of $\phi$. The first of the two isomorphisms is on the local symplectic homology of our isolated simple Reeb orbit, and the second is an isomorphism in $S^1$-equivariant symplectic homology of $\gamma$.

 \begin{Theorem}
\label{thm:iso1} Let $\gamma$ be a simple isolated closed Reeb orbit. Let $\phi$ be its return map. Then we have the following isomorphism in local homology for any coefficient ring $R$
\[
	SH_*(\gamma) \cong  HF^{loc}_*(\phi) \oplus  HF^{loc}_{*-1}(\phi).
\]
\end{Theorem}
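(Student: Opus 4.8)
The plan is to compute $SH_*(\gamma)=HF^{loc}_*(\phi_{\omega,f})$ directly from the symplectization model of Section~\ref{sec:results}, treating the circle of $1$-periodic orbits as a degenerate Morse--Bott family whose only transverse degeneracy is that of the return map. First I would record, using Lemma~\ref{thm:one_periodic_are_reeb}, Remark~\ref{H fixes origin}, and the Hryniewicz--Macarini normal form, that in coordinates $(r,\theta,z)$ on $W=(1-\delta,1+\delta)\times S^1\times B^{2n}$ the Hamiltonian vector field of $f(r)$ equals $f'(r)$ times the Reeb field, its set of $1$-periodic orbits is exactly the circle $1\times S^1\times 0$ corresponding to $\gamma$, and along this circle the linearized time-one flow splits as a shear in the symplectic $(r,\theta)$-plane (Morse--Bott nondegenerate transverse to $\gamma$, using $f''>0$) together with the linearized return map $D\phi$ on the symplectic complement $B^{2n}$. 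So $\gamma$ is an isolated set of $1$-periodic orbits whose sole transverse degeneracy is that of the fixed point of $\phi$ at the origin.

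Next I would perturb. Choose a $C^\infty$-small, time-dependent perturbation of $f(r)$ whose average over the circle of orbits is a perfect Morse function $g$ on $S^1$ with one minimum and one maximum, combined with the standard small perturbation used in the definition of $HF^{loc}(\phi)$ in the $B^{2n}$-directions. Since $f''>0$ confines trajectories near $r=1$ and the perturbation is $C^0$-small, the usual action and energy estimates keep the relevant Floer cylinders inside the fixed neighborhood, so the perturbed complex still computes $SH_*(\gamma)$. After perturbing, the $1$-periodic orbits organize into two clusters lying over $\theta_{\min}$ and $\theta_{\max}$; the Floer differential restricted to each cluster is the local Floer differential of a nondegenerate perturbation of $\phi$, so each cluster computes a copy of $HF^{loc}(\phi)$, and the two copies sit in degrees differing by exactly $1$, the Morse-index gap of $g$ on $S^1$ (the common $D\phi$-contribution to the Conley--Zehnder index is absorbed into the grading convention for $HF^{loc}(\phi)$).

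It remains to control the single differential mixing the two clusters. Ordering the clusters by action gives a two-step filtration of the local Floer complex whose associated graded is the two copies of $HF^{loc}(\phi)$, with a lone connecting map $\delta\colon HF^{loc}_*(\phi)\to HF^{loc}_{*-1}(\phi)$; equivalently this is the $d^1$ of the Morse--Bott spectral sequence with $E^1$-page $H_*(S^1;\mathcal L)\otimes HF^{loc}(\phi)$, for $\mathcal L$ the local system obtained by transporting the transverse Floer data once around the circle of orbits. The map $\delta$ counts Floer cylinders joining the clusters that project onto the two gradient trajectories of $g$ from $\theta_{\max}$ to $\theta_{\min}$; these two trajectories enter the Morse differential of $S^1$ with opposite signs, so $\delta=\pm(\mathrm{id}-\mu)$ with $\mu$ the monodromy of $\mathcal L$. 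This is where simplicity of $\gamma$ is used in an essential way: because $\gamma$ is uniterated, the family of return maps over the circle of orbits is conjugate, via the Reeb flow, to the single map $\phi$, so $\mathcal L$ is the constant local system and $\mu=\mathrm{id}$; hence $\delta=0$, the filtration splits, and $SH_*(\gamma)\cong HF^{loc}_*(\phi)\oplus HF^{loc}_{*-1}(\phi)$ over any coefficient ring $R$. (For an iterate $\gamma^k$ one instead gets $\mu$ of order $k$ and $\delta$ need not vanish, matching the $\mathbb Z_k$-invariant picture of \cite{GHHM}; this is exactly where McLean's inequality from \cite{Mc} becomes an equality.) Alternatively, for a simple orbit the Hryniewicz--Macarini model can be deformed, through Hamiltonians with the same isolated invariant set, into a split Hamiltonian $f(r)\oplus H_\phi$ on the $(r,\theta)$-annulus times $B^{2n}$, and one applies a local K\"unneth formula together with the elementary computation that the local Floer homology of a twist map of the annulus near its circle of fixed points is $R\oplus R[1]$.

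The step I expect to be the main obstacle is precisely the claim $\mu=\mathrm{id}$ (equivalently, the reduction to a split model): one must justify rigorously that for a \emph{simple} orbit the family of transverse return maps over the circle of $1$-periodic orbits carries no nontrivial monodromy, and that this is what forces the connecting differential to vanish and yields a genuine direct sum rather than merely McLean's rank inequality. The remaining analytic points---constructing a perturbation that simultaneously isolates the orbits in the $S^1$-direction and regularizes the $B^{2n}$-directions, and the $C^0$-confinement of Floer trajectories using $f''>0$---are of the type already treated in \cite{HM:contacthom}, \cite{HHM:local Morse}, and \cite{GG:convex}, but still need to be set up carefully in this local, doubly degenerate setting.
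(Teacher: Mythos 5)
Your main route (Morse--Bott perturbation of the circle of orbits, two clusters over $\theta_{\min},\theta_{\max}$, and a connecting map $\delta=\pm(\mathrm{id}-\mu)$) is genuinely different from the proof in the paper, but it has a real gap exactly at the point you flag, and your proposed justification for closing it does not work. The family of transverse return maps over the circle is $\phi_\theta=\phi^{\theta}_{-H}\circ\phi\circ(\phi^{\theta}_{-H})^{-1}$, so conjugating by the Reeb flow trivializes the family only over $[0,1)$: at $\theta=1$ the conjugating map is $\phi$ itself, and the monodromy $\mu$ of your local system $\mathcal L$ is the automorphism of $HF^{loc}_*(\phi)$ induced by $\phi$ acting on its own local Floer homology. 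Simplicity of $\gamma$ does not make $\mathcal L$ ``constant''; the triviality of this induced automorphism is a nontrivial statement (it is the $k=1$ instance of the assertion that the natural $\mathbb Z_k$-action on $HF^{loc}(\phi^k)$ is well defined, i.e.\ that the full monodromy is the identity), and it is not proved by the observation that the maps in the family are conjugate. In addition, the identification of the cluster-mixing differential with $\mathrm{id}-\mu$ presupposes a Morse--Bott-type gluing/correspondence theorem in a setting where the transverse fixed point of $\phi$ is degenerate; no off-the-shelf result of this kind is invoked, and constructing it (simultaneously isolating in the $S^1$-direction and regularizing the degenerate $B^{2n}$-directions, then showing low-energy cylinders fiber over gradient lines of $g$) is at least as hard as the theorem itself. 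So as written the argument establishes at best the two-step filtration and the rank inequality already in \cite{Mc}, not the splitting.

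The one-sentence ``alternative'' you mention is in fact the paper's proof, but the step you describe as ``deform, through Hamiltonians with the same isolated invariant set, into a split Hamiltonian $f(r)\oplus H_\phi$'' is precisely where all the work lies: the obstruction is that $H$ in the Hryniewicz--Macarini normal form is $\theta$-dependent, and one cannot simply deform it away. The paper first replaces $\omega=d(r\alpha)$ by $\omega_H=d(r-H)\wedge d\theta+\sigma$ (Proposition \ref{new model}), develops Lemma \ref{thm:lemma1} and Lemma \ref{thm:lemma2} to control which replacements of $H$ preserve both $HF^{loc}(\phi)$ and the symplectic homology (including the matching degree shifts), and then proves Proposition \ref{autonomous H} -- via generating functions, splitting off the nondegenerate part, and Ginzburg's nearly-autonomous construction -- to obtain an autonomous $\hat H$ with the origin isolated for all $t\in(0,2)$; only after the change of variables $\rho=r-\hat H$ does the Hamiltonian split as $f(\rho)+\hat H(z)$, and the K\"unneth formula plus $HF^{loc}_*(S^1,d\rho\wedge d\theta,f)\cong HM_{*+1}(S^1,f)$ give the direct sum, with the Tor terms vanishing because the $S^1$-factor is free of finite rank, which is what makes the statement valid over an arbitrary ring $R$. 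If you want to pursue your spectral-sequence route instead, the two missing inputs you must supply are (i) a local Morse--Bott framework valid with degenerate transverse directions, and (ii) a proof that $\phi$ acts as the identity on $HF^{loc}_*(\phi)$; neither is a formal consequence of what you wrote.
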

The left hand side of this expression is the local symplectic homology of $\gamma$, and the right hand side is the local Hamiltonian Floer homology of the return map $\phi$. Recall that, as stated in the introduction, this is related to the a result by McLean where he proved an upper bound in rank for these homology groups in \cite{Mc}.  \\
 
The second isomorphism that we prove is
\begin{Theorem}
\label{thm:iso2}
Let $\gamma$ be a simple isolated closed Reeb orbit. Let $\phi$ be its return map. Then we have the following isomorphism in local homology for any coefficient ring $R$
\[
	SH_*^{S^1}(\gamma) \cong   HF^{loc}_*(\phi).
\]
\end{Theorem}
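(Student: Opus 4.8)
The plan is to compute $SH^{S^1}_*(\gamma)=HF^{loc,S^1}_*(\phi_{\omega,f})$ directly from the $S^1$-equivariant local Floer theory of $\phi_{\omega,f}$ on the symplectization $W$, by localizing to the circle of $1$-periodic orbits of $f$ and exploiting that the loop-rotation $S^1$-action on that circle is \emph{free} precisely because $\gamma$ is simple. The nonequivariant analogue of this localization is what produces the two summands in Theorem~\ref{thm:iso1} (there the circle contributes $H_*(S^1)$), and the point here is that the equivariant homology of a \emph{free} circle is only $H_*(\mathrm{pt})=R$, which removes the extra summand.

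First I would set up the local model following Hryniewicz--Macarini: by Lemma~\ref{thm:one_periodic_are_reeb} the $1$-periodic orbits of $f$ in $W$ form a single embedded circle $S_\gamma=1\times S^1\times 0$, which is a genuine Morse--Bott critical manifold \emph{in the direction along $\gamma$} (the symmetry of the problem guarantees this) but possibly degenerate in the $B^{2n}$-directions. Accordingly one works with local Floer homology and chooses a small perturbation supported in a fixed neighborhood of $S_\gamma$, loop-rotation-invariant along $S_\gamma$ --- equivalently, a perturbation of the germ $\phi$ transported around $\gamma$ --- so that the relevant moduli spaces are cut out transversally while $S_\gamma$ survives as a Morse--Bott circle. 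The technical backbone is the usual package for local (equivariant) Floer homology: confinement of Floer trajectories to the chosen neighborhood for $\delta$ and the perturbation small, and independence of the homology of all auxiliary data, including the tower $S^{2N+1}\hookrightarrow S^{2N+3}$ of finite approximations to $ES^1$.

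The heart of the argument is the equivariant Morse--Bott reduction. The loop-rotation $S^1$ preserves $S_\gamma$ and acts on it by rotation of the parametrization; since $\gamma$ is simple this action is free. One then has a Morse--Bott-type spectral sequence computing $HF^{loc,S^1}_*(\phi_{\omega,f})$ whose input is the equivariant homology $H^{S^1}_*(S_\gamma;\mathcal{T})$ of the critical circle with coefficients in the transverse Floer data $\mathcal{T}$ (an $S^1$-equivariant local system whose monodromy around $S_\gamma$ is governed by the linearized return map). Because the $S^1$-action on $S_\gamma$ is free, $H^{S^1}_*(S_\gamma;\mathcal{T})$ is the homology of the quotient $S_\gamma/S^1=\mathrm{pt}$ with coefficients in the descended module, and by the same normal-form analysis used for Theorem~\ref{thm:iso1} this descended module is exactly the local Floer complex of the return map $\phi$: the $r$-direction, where $f''>0$ forces a definite shear contribution, together with the $B^{2n}$-directions, reproduce $CF^{loc}(\phi)$, while the circle factor is collapsed to a point. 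Since $H^{S^1}_*(S_\gamma)$ is concentrated in degree $0$, all higher differentials vanish for degree reasons, the spectral sequence degenerates, and one obtains $SH^{S^1}_*(\gamma)\cong HF^{loc}_*(\phi)$; the grading conventions of the paper are arranged so no shift appears. As a consistency check, feeding Theorem~\ref{thm:iso1} and this computation into the Gysin-type sequence $\cdots\to SH_k(\gamma)\to SH^{S^1}_k(\gamma)\xrightarrow{D}SH^{S^1}_{k-2}(\gamma)\to SH_{k-1}(\gamma)\to\cdots$ forces the connecting operator $D$ to vanish, exactly as freeness of the action predicts.

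The step I expect to be the main obstacle is this equivariant Morse--Bott reduction made rigorous in the local setting: carefully identifying the transverse structure along $S_\gamma$ with the local Floer complex of $\phi$ in a way compatible with the $S^1$-actions, and showing that the free $S^1$-action genuinely collapses the circle of orbits rather than merely up to a filtration with possibly nontrivial extensions, so that the conclusion holds over an arbitrary coefficient ring $R$ and not just a field. Once the transverse identification is in place and compatible with the equivariant structure, the remaining inputs --- confinement, invariance under auxiliary choices, and degeneration of the spectral sequence --- are routine or immediate.
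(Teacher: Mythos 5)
Your guiding intuition is the same one that drives the paper -- in the equivariant theory the circle of orbits contributes only $R$ in degree zero rather than $H_*(S^1)$ -- but the step you yourself flag as the main obstacle is a genuine gap, and it is not a technical afterthought: it is essentially the whole content of the theorem. Since $\gamma$ may be totally degenerate in the transverse directions, $S_\gamma=1\times S^1\times 0$ is not a Morse--Bott manifold of orbits, and there is no off-the-shelf ``Morse--Bott-type spectral sequence'' for local $S^1$-equivariant Floer homology whose $E^1$-page is $H^{S^1}_*(S_\gamma;\mathcal{T})$ with $\mathcal{T}$ a local system of transverse Floer data identified with $CF^{loc}(\phi)$. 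Packaging the transverse Floer theory (which involves holomorphic-curve counts, not just linear-algebraic data) as an equivariant local system over the orbit circle, proving it computes $HF^{S^1,loc}_*(\phi_{\omega,f})$, and controlling extensions over an arbitrary ring $R$ is precisely the hard part; its iterated analogue is the open conjecture $SH^{S^1}(\gamma^k)\cong HF(\phi^k)^{\mathbb{Z}_k}$ discussed in the introduction, which indicates that such a reduction is not routine. Moreover, your proposed ``loop-rotation-invariant perturbation transported around $\gamma$'' does not by itself give transversality for the equivariant moduli spaces -- that is exactly why the definition used here runs through the Borel construction with the spheres $S^{2k+1}$ -- so even the setup of your spectral sequence is not in place.

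For comparison, the paper never localizes along the circle abstractly. It makes an explicit chain of replacements, each controlled by uniform isolation of the fixed-point set ((LFH1)/(LEFH1)) and by Lemmas \ref{thm:lemma1} and \ref{thm:lemma2}: passing to the symplectic form $\omega_H$, splitting the return map into nondegenerate and totally degenerate parts via generating functions, replacing the degenerate part by a nearly-autonomous and then autonomous Hamiltonian (Proposition \ref{autonomous H}), and finally the change of variables $\rho=r-\hat H$, after which the Hamiltonian is literally $f(\rho)+\hat H(z)$ on a product. At that point the K\"unneth formula applies, and the circle factor is computed by $HF^{S^1}_*(f)\cong H^{S^1}_*(S^1)=R$ concentrated in degree zero (Proposition 2.22 of \cite{GG:convex}); since this factor is a free $R$-module in a single degree, the K\"unneth spectral sequence collapses and the statement holds over any $R$ -- the issue you worried about is resolved by construction, not by an argument about free group actions. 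Also note that the paper does track a degree shift $s$ along the way; it cancels because the same shift occurs on the $HF^{loc}(\phi)$ side (Corollary \ref{l2c}), rather than because ``no shift appears.'' If you want to pursue your route, you would need to first prove the transverse identification -- and for a simple orbit that is most naturally done by exactly the kind of normal-form and splitting analysis the paper carries out, at which point the K\"unneth argument is already available.
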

The left hand side of this expression is the $S^1$-equivariant symplectic homology of $\gamma$, and the right hand side is the local Hamiltonian Floer homology of $\phi$.\\*

It is worth noting that this isomorphism is independent of the choice of the cross product structure on $\Sigma=S^1\times B^{2n}$. For any bundle you take over $S^1$ to represent $\Sigma$ one can look at the return map of the fiber over some fixed $\theta_0$ in $S^1$; the local Hamiltonian Floer homology of this map will be isomorphic to that of $\phi$ up to a shift in degree. In addition one can define the symplectic homology of $\gamma$ independent of this choice of trivialization and still obtain something isomorphic, up to a shift in degree, to the symplectic homology we specify.\\

The role that the cross product plays here is that it allows us fix a frame for $B^{2n}$ along $S^1$, and thus allows us to determine an isotopy for the return map. This choice of frame will canonically extend to a frame in the setting of the symplectic homology of $\gamma$, which will determine the Conley Zehnder index and grading in this setting. Thus, when we state this isomorphism whatever comparison we make in degree is relative to the choice $\Sigma=S^1\times B^{2n}$ and how it subsequently determines the grading for our homology groups.\\

Finally, it is important that we mention that Theorem \ref{thm:iso2} is in some sense known, albeit not through a direct proof. Namely, in the case where the coefficient ring contains $\mathbb Q$, Theorem \ref{thm:iso2} can be deduced by applying two results regarding homology of contact manifolds. First, we know that the positive part of equivariant symplectic homology is isomorphic to the linearized or cylindrical contact homology, when it is defined. This is due to \cite{BO}, a paper by Bourgeois and Oancea where they prove this result in a global setting, when the coefficient ring contains $\mathbb Q$. This argument translates word for word to the local setting and when the orbit $\gamma$ is simple the contact homology is defined, see \cite{HM:contacthom}. As stated in the introduction, in \cite{HM:contacthom} they proved that the local contact homology of an isolated Reeb orbit $\gamma$ is isomorphic to the local Floer homology of the return map in Proposition 5.1. This together with  \cite{BO} proves the claim. Alternatively, one may use Lemma 5.2 of \cite{EKP} from Eliashberg, Kim, and Polterovich's paper which shows that contact homology is isomorphic to the Hamiltonian Floer homology of a certain Hamiltonian. When we look at these results in our local setting the Hamiltonian Floer homology we get from the \cite{EKP} result corresponds to a Hamiltonian which generates the germ of the return map $\phi$.\\

The advantage of the proof presented here is two-fold. First, it is a direct proof not involving contact homology whatsoever. As a result, it does not involve many of the additional complex tools which are employed in the above results. Secondly, our result remains true regardless of which coefficient ring one chooses to compute ones homology with. \\*

\subsection{What this result tells us about symplectically degenerate maxima}
\label{sec:SDM}

In this section we give two different definitions of a Reeb orbit being a symplectically degenerate maximum (SDM), and explain how our result shows their equivalence for simple Reeb orbits. This can be seen as an immediate corollary to Theorem \ref{thm:iso2}. The two definitions we give here are slightly different from the statements which involve iterations of an orbit having homology supported in certain degrees. That is, the definitions one usually sees in Conley conjecture type arguments. Nevertheless, the definitions we give here are equivalent to those involving the homology of iterates.\\

First, let's look at the symplectic homology definition of an SDM in which we are interested. Let $x$ be an isolated Reeb orbit in a contact manifold $(Y,\alpha)$. We say that $x$ is a \emph{symplectically degenerate maximum} if $SH^{S^1}_{\hat \mu(x) +n}(x)\neq 0$ where $\hat \mu$ is the mean index as defined in \cite{Lo,SZ,GG:convex} taken with respect to a trivialization of $(1-\delta,1+\delta) \times S^1\times B^{2n}$, and where $n$ here comes from the dimension of the contact manifold $Y$ which is $2n+1$. This definition is sometimes called being a contact SDM as given in \cite{GHHM}. This definition, as well as its iterated analogue, are discussed at some length in  \cite{GS}.\\

On the other hand, we are also interested in the Hamiltonian diffeomorphism which is determined by the germ of the return map $\phi$. If $\phi$ has an isolated fixed point $\hat x$, we say that $\hat x$ is an SDM if $HF^{loc}_{\Delta(\hat x)+n}(\phi) \neq 0$. This is the definition given in \cite{Gi:CC}. Here $\Delta(\hat x)$ is the mean index of $\hat x$ using the same definition as above, except in this case it is taken with respect to a trivialization on $B^{2n}$. This is discussed at greater length in \cite{GG:CCAB}. Now we may give the second definition for a Reeb orbit to be an SDM. An isolated Reeb orbit $x$ is a \emph{symplectically degenerate maximum} if its associated return map has an isolated fixed point $\hat x$ and this fixed point is an SDM in the Hamiltonian Floer homology sense which we just described.\\

Notice that $\hat \mu$ and $\Delta$ have the same definition from the perspective of linear algebra except that $\hat \mu$ is defined on an enlarged vector space. More specifically, if we take a trivialization of $B^{2n}$ it canonically extends to a trivialization of $(1-\delta,1+\delta) \times S^1\times B^{2n}$. If we take $\Phi^t \in Sp(2n)$ to be the path corresponding to the return map $\phi$ which determines $\Delta(\hat x)$. Then the mean index $\hat \mu(x)$ on $(1-\delta,1+\delta) \times S^1\times B^{2n}$ is determined by the path
\[
	\left( \begin{matrix}
	1 & a(t)\\
	0 &1
	\end{matrix} \right) \oplus \Phi^t \in Sp(2n+2).
\] From this we can see that in fact in the case of our Reeb orbit we have $\hat \mu( x)=\Delta(\hat x)$. \\

\begin{Corollary}
\label{coro:SDM}
Let $x$ be an isolated simple Reeb orbit. Then the two different definitions given above for $x$ to be a symplectically degenerate maximum are equivalent.
\end{Corollary}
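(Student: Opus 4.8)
The plan is to obtain the corollary directly from Theorem \ref{thm:iso2} together with the index identity recorded just above its statement; no new machinery is needed. First I would observe that the second definition is well-posed precisely because $x$ is isolated: by Remark \ref{H fixes origin} this is equivalent to the return map $\phi$ having an isolated fixed point at the origin, so $HF^{loc}_*(\phi)$ is indeed defined in a neighborhood of that fixed point and the condition $HF^{loc}_{\Delta(\hat x)+n}(\phi)\neq 0$ makes sense.

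Next I would pin down the gradings so that the comparison is honest. Fix the product trivialization $\Sigma = S^1\times B^{2n}$ and the canonically induced trivialization of $(1-\delta,1+\delta)\times S^1\times B^{2n}$; all Conley--Zehnder indices, mean indices, and hence all homological gradings are taken with respect to this compatible pair. With $\Phi^t\in Sp(2n)$ the path determined by $\phi$, the path governing $\hat\mu(x)$ is $\left(\begin{smallmatrix} 1 & a(t)\\ 0 & 1\end{smallmatrix}\right)\oplus\Phi^t$, and since the $2\times 2$ unipotent block has vanishing mean index this gives $\hat\mu(x)=\Delta(\hat x)$, exactly as noted in the discussion preceding the corollary.

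Finally I would apply Theorem \ref{thm:iso2} in the single degree $*=\hat\mu(x)+n$. Since the isomorphism $SH^{S^1}_*(\gamma)\cong HF^{loc}_*(\phi)$ holds for every $*$ with respect to the grading fixed above, and $\hat\mu(x)+n=\Delta(\hat x)+n$, we obtain
\[
	SH^{S^1}_{\hat\mu(x)+n}(x)\;\cong\;HF^{loc}_{\hat\mu(x)+n}(\phi)\;=\;HF^{loc}_{\Delta(\hat x)+n}(\phi).
\]
Therefore the left-hand group is nonzero if and only if the right-hand group is nonzero, which is precisely the statement that the two definitions of $x$ being a symplectically degenerate maximum coincide.

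The only point that needs genuine care — and the closest thing here to an obstacle — is the bookkeeping of conventions: one must verify that the trivialization used to define $\hat\mu$ and the grading on $SH^{S^1}(\gamma)$ is the same one (up to the canonical extension) used to define $\Delta$ and the grading on $HF^{loc}(\phi)$, and that the degree shift in Theorem \ref{thm:iso2} is exactly zero under this compatible choice rather than some nonzero constant. Once the normalizations are aligned, the corollary is immediate from the two displayed facts above.
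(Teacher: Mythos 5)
Your argument is correct and follows essentially the same route as the paper: Corollary \ref{coro:SDM} is deduced directly from Theorem \ref{thm:iso2} together with the identity $\hat\mu(x)=\Delta(\hat x)$ established in the discussion preceding it. The extra remarks you make on well-posedness via Remark \ref{H fixes origin} and on matching the trivializations are sensible bookkeeping but do not change the substance of the proof.
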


\begin{proof}
This follows immediately from Theorem \ref{thm:iso2} together with the above stated fact that $\hat \mu( x)=\Delta(\hat x)$. 
\end{proof}

\subsection{Organization of the paper and outline of the proof}
\label{sec:org}

Here we lay out the organization of the paper and some guiding principles. We  also give a sketch of the result. \\

In Section \ref{sec:prelim} we give more complete definitions and pertinent results for the different homology groups involved. We also spell out the relevant formal properties of said homology groups. \\

After this we begin the proof of the main theorems.  The proof of Theorems \ref{thm:iso1} and \ref{thm:iso2} share much of the same set up and only diverge in approach at the very end of the argument, which we handle specifically in Sections \ref{sec:proof1} and \ref{sec:proof2} respectively. The majority of the work which applies to both approaches can be explained in rough terms as follows.\\

In broad strokes, the strategy is to apply the K{\"u}nneth formula for Hamiltonian Floer homology to the symplectic homology of $\gamma$. Much of the work done throughout the paper is done in order to put our local model in a form where we are able to apply the K{\"u}nneth formula. We may apply the  K{\"u}nneth formula for Hamiltonian Floer homology if we have two Hamiltonians defined on a product of symplectic manifolds. The homology is computed by looking at the time-one flow of the associated Hamiltonian vector fields. So it is a valid perspective to have two Hamiltonian vector fields defined on the product of symplectic manifolds.\\ 

In Section \ref{local model:HM} we give a detailed description of a local model for the symplectization of a contact manifold in the neighborhood of an isolated Reeb orbit. In the local model we have a Hamiltonian $f$, a symplectic form $\omega$ and the space we are interested in is $W=(1-\delta,1+\delta)\times S^1\times B^{2n}$ with coordinates $(r,\theta, z)$. Here the Hamiltonian $f$ only depends on the $r$ coordinate. The symplectic form $\omega$ is defined by taking $\omega=d(r\alpha)$. By a result from \cite{HM:contacthom} we know that there is some neighborhood $\Sigma$ of $\gamma$ on which we have $\alpha = Hd\theta + \lambda_0$ where the one form $\lambda_0 $ is a primitive of $\sigma$, $\sigma$ is the standard symplectic structure on $B^{2n}$, and $H$ is a periodic Hamiltonian on $B^{2n}$. In this symplectization the Reeb orbit $\gamma$ corresponds to the set $1\times S^1 \times 0$. The Hamiltonian vector field of $f$ with respect to $\omega$ is 
\[
	X_{\omega ,f} = \frac{f'(r)}{F}\left(\frac{\partial}{\partial\theta} -X_H\right).
 \] The Hamiltonian vector field $X_H$ is defined by the equation $-dH = \iota_{X_H}\sigma$, and $F=H-\lambda_0(X_H)$.\\

Our goal is to replace the Hamiltonian vector field for $f$ with respect to $\omega$ with one which decomposes as two Hamiltonian vector fields, one that only depends on the $r$ and $\theta$ coordinates, and one that depends only on the $z$ coordinates. We want to make this replacement in such a way that it does not affect the relevant homology groups up to isomorphism. Once this is done we will make sure that these Hamiltonian vector fields come from genuine Hamiltonians with respect to some symplectic structure on the model space $W$, and that the homology this computes is isomorphic to the original symplectic homology.\\
 
More specifically, we show the Hamiltonian vector field $X_{\omega ,f}$ computes the same local Floer homology as that computed by a Hamiltonian vector field which roughly resembles $f'(r)\frac{\partial}{\partial\theta} -X_H.$ The first vector field in this summand depends only on $r$ and $\theta$ and the second vector field depends only on the $z$ coordinates. There are a number of modifications that must be made in order to produce a Hamiltonian vector field of the desired form, and they must be done in such a way that homology is preserved.\\

The first issue addressed is the factor $F$ from $X_{\omega ,f}$; we handle the removal of this term by introducing a new local model in Section \ref{new model}. Secondly a priori the Hamiltonian $H$ is not autonomous as it lives in the phase space $\Sigma = S^1\times B^{2n}$ and is potentially $\theta$ dependent. Therefore, we need to replace $H$ by an autonomous counterpart $\hat H$. This is the content of Proposition \ref{autonomous H}, the proof of which is the most technically involved step we take in proving our main results. As for the side of the isomorphism involving the Hamiltonian Floer homology of the return map $\phi$, we will leverage the formal properties detailed in Sections \ref{FH:prop} and \ref{EFH:prop} together with well known techniques for modifying Hamiltonians on Euclidean space. Additionally, we need this replacement to be done in such a way that we are not affecting the symplectic homology groups up to isomorphism. The primary tools for doing this are developed in Lemma \ref{thm:lemma1} and Lemma \ref{thm:lemma2}, which give sufficient conditions for the types of modifications we may do to the Hamiltonian $H$ while still producing an isomorphism in symplectic homology. We find the desired autonomous $\hat H$, that is we give the proof of Proposition \ref{autonomous H}, in Section \ref{sec:propproof} as well as verifying that the any replacements made in the process satisfy all the technical conditions required to produce isomorphisms in the relevant local homology groups.\\

After these modifications we are left with a Hamiltonian vector field of the form $f'(r)\left(\frac{\partial}{\partial\theta} -X_{\hat H}\right).$ At this point there are two remaining issues preventing us from applying the K\"unneth formula. The first problem is that $f'(r)$ is being multiplied with the $X_{\hat H}$ term, and the second is that the contribution that $\hat H$ is making to the Hamiltonian vector field is contained inside of a symplectic form. We handle both of these issues at once by making a change of variable. The statement of Proposition \ref{autonomous H} and the change variable both occur in Section \ref{sec:prep for Kunneth}. \\
 
Finally, we are able to apply the K{\"u}nneth formula in the separate cases of symplectic homology and $S^1-$equivalent symplectic homology to finish the proof of the main Theorems in Sections \ref{sec:proof1} and \ref{sec:proof2}. \\

As a final note, some of the notation in the proofs which follow will take a slightly different form from that which appears in this sketch. As we lay out conventions and give specific notation for functions mentioned above, the final notation will be different. At each step, however, the expressions will have roughly the same form as those described above up to a sign or a change in labeling.\\

\section*{Acknowledgements} The author is grateful to Alberto Abbondandolo, Erman Cineli, Cheyenne Dowd, Umberto Hryniewicz, Leonardo Macarini, Marco Mazzucchelli, Mark McLean, and Otto van Koert for useful discussions and remarks. The author is particularly grateful to Viktor Ginzburg for his invaluable input and guidance throughout the project. Finally, the author is grateful to his wife for her support throughout the process. This paper was partially supported through a fellowship awarded by the math department at UC Santa Cruz.

\section{Background}
\label{sec:prelim}

In this section the aim is to give the relevant homology definitions and results as well as to set notation and conventions used throughout the paper. Specifically we give a very brief review of local Hamiltonian Floer homology, local symplectic homology, and their equivariant counterparts. In addition, we lay out relevant formal properties of these homology groups that will be used throughout the paper. The reader may consider consulting this section only as needed.\\

\subsection{Local Floer homology}
\label{sec:FH}

In this section we give a short recollection of the notion of local Floer homology. All of the definitions and results in this section are standard and well known. This homology was considered in a more general setting by Floer in \cite{F14,F15}; for a more specific account of local Floer homology the reader is referred to \cite{GG:gap,Mc}.\\

\subsubsection{Definition of local Floer homology:}
\label{FH:def}

Let $(M,\omega)$ be a symplectic manifold. Let $x$ be an isolated one-periodic orbit of a periodic Hamiltonian $H:S^1\times M \to \mathbb R$. Select a sufficiently small tubular neighborhood of $U$ of $x$ and then take a $C^2$-small perturbation $\tilde H$ of $H$ supported in $U$. More precisely, let $U$ be a neighborhood of $x$ in the extended phase space $M\times S^1$ and let $\tilde H$ be $C^2$-close to $H$, equal to $H$ outside of $U$, and such that all the one-periodic orbits of $H$ that enter $U$ are nondegenerate. With some abuse of notation we consider $U$ as both a neighborhood in $M$ and in $S^1\times M$.\\

Consider the one-periodic orbits of $\tilde H$ contained in $U$. Then every Floer trajectory $u$ connecting two orbits is also contained in $U$ if both $||\tilde H -H||_{C^2}$ and $\sup(\tilde H-H)$ are small enough. Indeed the energy of $u$ is equal to the difference of the action values of the orbits and is thus bounded above by $O(||\tilde H-H||_{C^2})$; from this it follows that $u$ takes values in $U$, see \cite{Sa1,Sa2}. Furthermore, all the transversality conditions are satisfied for moduli spaces of Floer anti-gradient trajectories of one-periodic orbits of $\tilde H$ contained in $U$.\\

In this setting the compactness theorem applies and we can construct a Floer complex for the one-periodic orbits of $\tilde H$ which are contained in $U$ in the standard way. This complex is independent of the choice of perturbation $\tilde H$ and of the almost complex structure used. The resulting homology group is denoted by  $HF^{loc}_*(p,\omega,H)$, where $p$ is the fixed point associated with $x$ and $\omega$ is the symplectic form on $U$. If the fixed point $p$ and the symplectic form  $\omega$ is understood, we may opt to use the more compact notation of simply $HF^{loc}_*(\phi^1_H)$. Note that the isotopy $\phi^t_H$ is essential to have the grading of $HF^{loc}_*(\phi^1_H)$ fixed and that isotopy is suppressed in the more compact notation. The grading of this complex is determined by Conley Zehnder index, this is described at length in \cite{Sa2}.\\

In general, $x$ in this construction can be replaced by an isolated connected compact set $F$ of one-periodic orbits of $H$.\\

\subsubsection{Properties of local Floer homology}
\label{FH:prop}

In this section we lay out the well known properties of local Floer homology which we will use throughout the paper. We provide references for each property which the reader may consult for a more detailed account.

\begin{itemize}

\item[(LFH1)]  Take a family of Hamiltonians $H^s$ and a family of symplectic forms $\omega_s$, $s\in [0,1]$, such that the set of fixed points $F$ of the time-one Hamiltonian flow of $H^s$ with respect to $\omega_s$, which we call $\phi^1_{H^s}$, is a  {\it uniformly isolated} fixed point set, that is for $s \in [0.1]$, the set $F$ is independent of $s$ and is the set of fixed points of $\phi^1_{H^s}$ for all $s$ in some neighborhood $U$ which contains $F$ and is independent of $s$. Then $HF_*^{loc}(F, \omega_s,H^s)$ remains constant, and as a result $HF_*^{loc}(F, \omega_0,H^0) \cong HF_*^{loc}(F, \omega_1,H^1).$ For a more detailed description see \cite{Gi:CC,Vi:GAFA}.

\end{itemize}

It is worth noting that  in (LFH1) we may consider either $\omega_s$ or $H^s$ to be constant and the result remains true; that is to say we may vary just the Hamiltonian, just the symplectic form, or vary both simultaneously. In our setting, we will typically be considering a situation in which we are only varying the symplectic form or one in which we are only varying the Hamiltonian.\\

\begin{itemize}

\item[(LFH2)] Let $\phi^t_H$ be an isotopy for a time dependent Hamiltonian $H_t$ and let $\psi^t=\phi^t_G$ be a loop of Hamiltonian diffeomorphisms defined on neighborhood of $p$ and fixing $p$, that is $\psi^t(p)=p$ for every $t\in S^1$. Finally, let $J_t$ be a rime dependent family of almost complex structures. Then 
\[
	HF^{loc}_*(p,\omega, G\#H, \psi^t_*J_t)\cong HF^{loc}_{*-2\mu}(p,\omega, H, J_t)
\] Where $\mu$ is the Maslov index of the loop $t\mapsto d\psi^t \in SP(T_pM)$, and $H$ is any Hamiltonian with $p$ as a fixed point of its time-one map. Note that this is in contrast to the case where the Hamiltonian $G$ is globally defined and there is no shift in the degree of homology. This property shows us that local Floer homology is determined up to isomorphism, and possibly a shift on degree, by the time-one map of a Hamiltonian diffeomorphism. 

\item[(LFH3)] {\it K{\"u}nneth formula for Floer homology}. Let $(M_1,\omega_1)$ and $(M_2, \omega_2)$ be symplectic manifolds with Hamiltonians $H_1$ and $H_2$ respectively. Then 
\[
	HF_*(\phi^1_{H_1\oplus H_2}) = \bigoplus_{j+k=*}HF_j(\phi^1_{H_1})\otimes HF_k(\phi^1_{H_2}).
\] Where $H_1\oplus H_2$ is the natural Hamiltonian defined on the product $M_1\times M_2$.
\end{itemize} 

There is also a K{\'u}nneth formula for local Floer homology. Let $\gamma_1$ and $\gamma_2$ be one-periodic of Hamiltonians $H_1$ and $H_2$, with fixed points $p_1$and $p_2$, defined on local neighborhoods $(U_1,\omega_1)$ and $(U_2,\omega_2)$ respectively . Then $HF_*((p_1,p_2),\omega_1\oplus\omega_2,H_1\oplus H_2) = \bigoplus_{j+k=*}HF_j(p_1,\omega_1,H_1)\otimes HF_k(p_2,\omega_2,H_2)$. We will have occasion to apply both versions in the argument which follows and will simply refer to this as the K{\"u}nneth formula. \\

Additionally, this formula becomes a spectral sequence when the coefficient ring, denoted as $R$, which the homology is computed over is not a field. The pages of this exact sequence are comprised of the higher Tor$^R$ groups of $HF_*(p_1,\omega_1,H_1), HF_*(p_2,\omega_2,H_2)$. However, if either of these Floer homology groups are finitely generated free $R$-modules in every degree, as will be the case in this paper, all of the higher Tor groups will all vanish and the spectral sequence will collapse to the isomorphism written above. In our setting, we are using specifically the K{\"u}nneth formula for compact symplectic manifolds with contact type boundaries; one can consult \cite{Oa} for a more in depth description of this result.\\

\subsection{Local $S^1$-equivariant Floer homology}
\label{sec:EFH}

In this section we examine the a local version of $S^1$-equivariant Floer homology. In particular, we are interested in the equivariant local Floer homology of an isolated orbit, which will be the basis for the definition of the local $S^1$-equivariant symplectic homology of an isolated Reeb orbit.\\

Let $x$ be an isolated one-periodic orbit of an autonomous Hamiltonian $H$ defined on a neighborhood $U$ of $x$. Now take a natural extension of $H$ to the space $S^1\times U\times S^{2k+1}$ with coordinates $(\beta, z,\xi)$  where $H(\beta, z,\xi) = H(z)$. Then take a $C^2$-small perturbation of $H$, call it $\tilde H$, which is invariant under the following $S^1$-action $\tilde H(\beta+\alpha ,z, \alpha\cdot\xi)= \tilde H(\beta, z,\xi)$, where $\alpha \in S^1$ and the dot denote the Hopf action of $\alpha$ on $\xi$. Under this perturbation of $H$ to $\tilde H$ the orbit $x$ breaks down into a collection of transversely nondegenerate critical $S^1$-orbits $y_i$ lying close to $C= Cx\times_{S^1}S^{2k+1}$, where $Cx$ is the orbit of $x$ in the space of loops on the ambient manifold. Let $CF^{S^1}_*(x)$ be the relatively graded vector space or free module generated by the $y_i$. Just as in the case of local Floer homology as above and the Floer trajectories $\tilde u$ connecting some $y_j$ to some $y_i$ stay close to $C$. A proof of this can be seen in \cite{GG:convex}. As a result a differential can be defined on $CF^{S^1}_*(x)$ in the standard way for $S^1$-equivariant Floer homology groups, that is counting certain special Floer trajectories connecting $y_i$ to other $y_j$ modulo both the $S^1$ action and the natural $\mathbb R$ action on the Floer trajectories. We can take the $S^1$-equivariant homology of this complex for any sphere of dimension $2k+1$.  Finally, we take a directed limit on the dimension of the sphere (that is $2k+1$ as $k$ tends to infinity). We call the resulting homology groups the local $S^1$ equivariant Floer homology of $x$ and denote it by $HF^{S^1}_*(x)$. We may also denote this as $HF^{S^1}_*(x,\omega,H)$ when we want to keep track of both the symplectic form $\omega$ and the Hamiltonian $H$; the resulting homology is independent of perturbation $\tilde H$ taken for $H$. As in the case with the local Floer homology above, this construction can be extended from a single isolated orbit to a compact, connected, isolated set of one periodic orbits which we will also call $F$ in this case.\\

\subsubsection{Properties of local $S^1$-equivariant Floer homology}
\label{EFH:prop}

The local $S^1$-equivariant Floer homology enjoys several formal properties similar to those of local Floer homology. The one which is most relevant to the proof of our results is laid out here. The construction of continuation maps for the local equivariant Floer homology extends basically word for word to the equivalent version of statement (LFH1) for local Floer homology. It is only restated in its entirety here so the statement is appropriately adapted to the equivariant setting and for ease of reference.\\

\begin{itemize}

\item{(LEFH1)} Consider a family of autonomous Hamiltonians $H^s$ and a family of symplectic forms $\omega_s$, $s\in [0,1]$, such that the set of fixed points $F$ of $\phi^1_{H^s}$ is a uniformly isolated fixed point set for all $s \in [0,1[$, that is the set $F$ consists only of the fixed points of $\phi^1_{H^s}$ in some neighborhood $U$ independent of $s$. Then $HF_*^{S^1}(F, \omega_s,H^s)$ remains constant, and as a result $HF_*^{S^1}(F, \omega_0,H^0) \cong HF_*^{S^!}(F, \omega_1,H^1).$ 

\end{itemize}

Again, similar to the non equivariant setting, we may consider either $\omega_s$ or $H^s$ to be constant and the result remains true. That is to say we may vary just the Hamiltonian, just the symplectic form, or vary both simultaneously.\\

With the relevant types of local Floer homology laid out we go on to define local symplectic homology.\\

\subsection{Local symplectic homology}
\label{sec:SH}

This section gives a brief overview of the definitions of two types of symplectic homology of an isolated one-periodic simple Reeb orbit $\gamma$. These definitions originally appear in \cite{GG:convex}. The first, which we will call the symplectic homology of $\gamma$, is built using local Floer homology. The second is the $S^1$-equivariant symplectic homology of the orbit $\gamma$, and is defined in terms of $S^1$-equivalent local Floer homology. We will often refer to the latter as the equivariant symplectic homology of $\gamma$. 

\subsubsection{Definitions of symplectic homology groups of a Reeb orbit}
\label{SH:def}

Let $\gamma$ be an isolated simple period-one Reeb orbit, let $\Sigma=S^1\times B^{2n}$ be a neighborhood of $\gamma$ in which there are no other Reeb orbits of period $t$ for $t\in(1-\eta, 1+\eta)$, and the return map $\phi$ from $0\times B^{2n}$ to itself is Hamiltonian. Let $\alpha$ be the contact form restricted to $\Sigma$. Then we may take the symplectization $W=(1-\delta,1+\delta) \times \Sigma$ with coordinates $(r, \theta, z)$ and the symplectic form $\omega = d(r\alpha)$. Next we take a Hamiltonian $f(r)$ with $f(1)=f'(1)=1$ and $f''>0$, then the Hamiltonian flow $\phi^1_{\omega, f}$ has $1\times S^1 \times 0$ as an isolated fixed point set. This set corresponds to $\gamma$ in the symplectization. We then define the {\bf local symplectic homology} of $\gamma$ to be 
 \[
SH_* (\gamma)= HF^{loc}_*(\gamma=1\times S^1\times 0,\omega ,f).
\]
As an aside, this definition comes as a slight adaptation of the one presented in \cite{GG:convex}. In that paper, they are considering local symplectic homology in a context where there is a global symplectic homology which is defined and important to their proof. The conditions which define local symplectic homology in that paper are defined relative to a global context and are determined by taking an admissible Hamiltonian from a cofinal sequence of admissible Hamiltonians which are convex and linear outside a compact set with slopes trending to infinity. There is also a condition that the Hamiltonians in this context are nonpositive on a certain compact set. This is done in order to separate the fixed points that lie the symplectic filling of the contact manifold which are guaranteed by Arnold conjecture. In that setting, they compute local symplectic homology by taking a Hamiltonian whose linear slope outside of a compact set is larger than the period of the Reeb orbit whose symplectic homology they would like to compute. Then they restrict the Hamiltonian to an appropriate neighborhood of the Reeb orbit within the symplectization of a symplectic manifold with boundary of contact type.\\

It is due to these facts that the statements here appear different than in \cite{GG:convex}, as we have no global context in which we are interested. Thus we may assume our simple Reeb orbit has period-one and ignore the conditions that are required of the Hamiltonian outside the small neighborhood of the Reeb orbit. If one translates their conditions to a purely local setting, it is fairly straightforward to see the conditions outlined here are equivalent to those in \cite{GG:convex}. The symplectic homology of $\gamma$ is independent of any choice of $f$ provided it satisfies the conditions we lay out above, namely that $f(1)=f'(1)=1$ and $f''>0$.\\

From this definition it is clear what the generators of our complex are and the differential. However, missing from our discussion up to this point is how the grading is defined. The grading for our local symplectic homology is determined by the trivialization taken for our tubular neighborhood $\Sigma=S^1\times B^{2n}$ of $\gamma$. When we pick a cross product structure $\Sigma$ we determine a frame for $\Sigma$ which naturally extends to $W$. This allows us to compute the Conley Zehnder index of any fixed point and thus determine the grading for our complex.\\

When we define the $S^1$-equivariant symplectic homology of $\gamma$, we use a symplectization $(W,\omega)$ and a Hamiltonian $f(r)$ of the same type as above. We define the {\bf $S^1-$equivariant symplectic homology} of $\gamma$ to be 
\[
	SH_*^{S^1}(\gamma) = HF^{S^1}_*(\gamma=1\times S^1\times 0,\omega, f).
\] The grading here is also determined by the trivialization $\Sigma =S^1 \times B^{2n}$ in a manner analogous to that described above.\\

To see why we may assume $\gamma$ has period-one, as well as a specific form that $\alpha$ may take in order to make computations in local symplectic homology, see Section \ref{local model:HM}.\\
	
\subsubsection{Properties of symplectic homology}
\label{SH:prop}

The formal properties of symplectic homology we use are all inherited from the respective formal properties of the associated Floer homology. We will need a few additional tools for the purpose of our proof and we build those in Section \ref{sec:background}.\\

\section{Preliminary results}
\label{sec:background}

In this section the aim is to introduce a local model for a neighborhood of a Reeb orbit which will makes local homology computations easier as well as well as putting us one step closer to applying the K\"unneth formula. After this, we introduce two tools which enable computations on this local model. In order to produce the new local model we start with a local model of a simple Reeb orbit introduced by Hryniewicz and Macarini in \cite{HM:contacthom}. We do this in Section \ref{local model:HM}. In this section we also prove some needed facts about the associated flows and Hamiltonians functions involved. We will then apply these facts in Section \ref{new model} as they will allow us to show that the new local model computes the same homology as the original local model from \cite{HM:contacthom}. We follow with the proof of Lemma \ref{thm:lemma1} and Lemma \ref{thm:lemma2}, which will tell us what kind of replacements we can make in terms of the return map $\phi$ in our new local model without affecting the symplectic homology of $\gamma$ up to isomorphism.\\

\subsection{The setting and the local model}
\label{local model:HM}

Before we give a thorough description of a model neighborhood of a Reeb orbit $\gamma$ where all our homology computations will take place, let us first recall some relevant information. We have a contact manifold $(Y,\alpha)$ of dimension $2n+1$. We have a Reeb vector field $R$ with associated Reeb flow $\psi^t$. We have a period-$T$ closed simple Reeb orbit $\gamma$. The orbit $\gamma$ has a neighborhood $\Sigma \subset Y$ with trivialization $S^1\times B^{2n}$ where the return map $\phi$ defined by following Reeb flow from the slice $0\times B^{2n}$ to itself is Hamiltonian.\\

Having recalled the above facts, we would like to make local homology computation along $\gamma$. In order to do this we will need a local model for what a contact manifold looks like along a Reeb orbit. For this we look to the following result from \cite{HM:contacthom}.
\begin{Lemma} 
\label{HM:model}
From Lemma 5.2 in \cite{HM:contacthom}. Let $\gamma$ be a simple closed period$-T$ Reeb orbit of a contact manifold ($Y, \alpha)$ of dimension $2n+1$.Then there exists a tubular neighborhood $\Sigma = S^1\times B^{2n}$ of $\gamma(\mathbb R)$, where $B^{2n}\subset \mathbb R^{2n}$ is a small ball centered at the origin, with coordinates $(\theta, q_1,...,q_n, p_1,....,p_n)$, such that $\gamma(\mathbb R)= S^1\times 0$, $\alpha= Hd\theta +\lambda_0$ where $H:\Sigma \to \mathbb R$ satisfies $H_\theta(0) =T$, $dH_\theta(0)=0$, and $\lambda_0 = \frac{1}{2} \Sigma_1^n q_i dp_i -p_i dq_i$.
\end{Lemma}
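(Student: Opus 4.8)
The plan is to reach this normal form by a sequence of coordinate changes, each one fixing the orbit $\gamma$ pointwise, and to finish with a Moser-type argument that removes a last exact term. First I would use the tubular neighborhood theorem to identify a neighborhood of $\gamma(\mathbb R)$ with $S^1\times B^{2n}$, coordinates $(\theta,z)$, with $\gamma(\mathbb R)=S^1\times 0$, parametrizing so that $\partial_\theta=TR$ along $\gamma$, and choosing the neighborhood so that the tangent spaces of the slices $\{\theta\}\times B^{2n}$ along $\gamma$ are the contact hyperplanes $\ker\alpha$. Writing $\alpha=H\,d\theta+\beta$ with $H=\iota_{\partial_\theta}\alpha$ and $\iota_{\partial_\theta}\beta=0$, the last choice gives $\beta|_\gamma=0$ (hence all $\theta$-derivatives of the coefficients of $\beta$ also vanish along $\gamma$) and $H|_\gamma=T$. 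Contracting the Reeb identity $\iota_R d\alpha=0$ with slice directions along $\gamma$, and using $\partial_\theta\beta|_\gamma=0$, forces the fiber differential $d_zH$ to vanish along $\gamma$; that is, $0\in B^{2n}$ is a critical point of each $H_\theta:=H(\theta,\cdot)$, with $H_\theta(0)=T$. Since $\alpha$ is contact, $d\alpha$ restricts to a symplectic form $\omega_\theta$ on each slice near the origin.

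Next I would apply a parametric Darboux theorem with parameter $\theta\in S^1$ to arrange $\omega_\theta\equiv d\lambda_0$ identically near the origin. The only input beyond Moser's method is that the symplectic vector bundle $(\gamma^*\ker\alpha,d\alpha)$ over $S^1$ is trivial (because $\mathrm{Sp}(2n)$ is connected); trivializing it and applying the corresponding fiberwise linear change $(\theta,z)\mapsto(\theta,A_\theta z)$ makes $\omega_\theta(0)=d\lambda_0$ for all $\theta$, after which (shrinking $B^{2n}$) the usual Moser homotopy $(1-t)d\lambda_0+t\omega_\theta$ yields a smooth $S^1$-family of fiberwise diffeomorphisms fixing the origin with $\psi_\theta^*\omega_\theta=d\lambda_0$. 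Recollecting the $d\theta$-component after each step, these changes of coordinates preserve $\iota_{\partial_\theta}\beta=0$, $\beta|_\gamma=0$, $H|_\gamma=T$, $d_zH|_\gamma=0$, and now $d\beta$ restricts to $d\lambda_0$ on every slice. Consequently $\beta|_{\mathrm{slice}}-\lambda_0$ is closed, hence exact on the ball: $\beta|_{\mathrm{slice}}=\lambda_0+d_zg_\theta$; normalizing $g_\theta(0)=0$ (legitimate since $\beta|_\gamma=0$) gives a smooth $g$ on $S^1\times B^{2n}$ vanishing to second order along $\gamma$, and writing $d_zg=dg-(\partial_\theta g)\,d\theta$ produces
\[
\alpha=h\,d\theta+\lambda_0+dg,\qquad h:=H-\partial_\theta g,
\]
with $h|_\gamma=T$ and $d_zh|_\gamma=0$ because $\partial_\theta g$ vanishes to second order along $\gamma$.

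Finally I would kill $dg$ by a Moser argument on contact forms. Put $\alpha_t:=h\,d\theta+\lambda_0+t\,dg$. Since $dg$ is exact, $d\alpha_t=d\alpha_0$ is independent of $t$; one checks $\alpha_0$ is contact near $\gamma$ (the top form $\alpha_0\wedge(d\alpha_0)^n$ has leading term $T\,d\theta\wedge(d\lambda_0)^n$ there), so every $\alpha_t$ is contact on a small enough neighborhood of $\gamma$. Seeking an isotopy $\psi_t$ with $\psi_t^*\alpha_t=\alpha_0$, one differentiates and uses $\mathcal L_{X_t}\alpha_t=d(\iota_{X_t}\alpha_t)+\iota_{X_t}d\alpha_t$: the choice $X_t:=-g\,R_t$, with $R_t$ the Reeb field of $\alpha_t$, gives $\iota_{X_t}\alpha_t=-g$ and $\iota_{X_t}d\alpha_t=0$ (as $\iota_{R_t}d\alpha_t=0$), hence $\mathcal L_{X_t}\alpha_t=-dg=-\dot\alpha_t$ and $\tfrac{d}{dt}\psi_t^*\alpha_t=0$. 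Because $g$ vanishes to second order along $\gamma$, so does $X_t$, so $\psi_t$ is defined on a neighborhood of $\gamma$, fixes $\gamma$ pointwise, and is tangent to the identity along $\gamma$; in particular $\psi_1(S^1\times 0)=S^1\times 0$. Then $\psi_1$ is the last coordinate change, $\psi_1^*\alpha=\psi_1^*\alpha_1=\alpha_0=h\,d\theta+\lambda_0$, and setting $H:=h$ and restricting to a ball gives the asserted form, with $H_\theta(0)=T$, $dH_\theta(0)=0$, and $\lambda_0=\tfrac12\sum q_i\,dp_i-p_i\,dq_i$ the standard primitive.

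None of the individual steps is deep — this is standard normal-form technology — so the real work is bookkeeping: checking that every coordinate change fixes $\gamma$ and is tangent to the identity there, keeps $\iota_{\partial_\theta}\beta=0$, and does not disturb $H|_\gamma=T$ or the criticality $d_zH|_\gamma=0$; and ensuring, on the analytic side, that the parametric-Darboux and Moser vector fields vanish to high enough order along $\gamma$ that their flows fix the orbit and extend over a full neighborhood of it. The one structural ingredient worth isolating, and the place where the argument could most easily go wrong, is the triviality of the symplectic normal bundle over $S^1\cong\gamma$, which is exactly what allows the fiberwise symplectic forms to be straightened consistently around the circle.
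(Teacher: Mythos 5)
Your argument is correct: the tubular neighborhood adapted to $\ker\alpha$ along $\gamma$, the parametric Darboux step (using triviality of the symplectic normal bundle over $S^1$), and the two Moser arguments — fiberwise for $d\alpha$ and then the contact Moser with $X_t=-gR_t$ killing the exact term $dg$ — all go through, and the bookkeeping you flag ($\beta|_\gamma=0$, $H|_\gamma=T$, $d_zH|_\gamma=0$ preserved, $g$ vanishing to second order along $\gamma$ so that $dg|_\gamma=0$ keeps every $\alpha_t$ contact near $\gamma$) is exactly what is needed. Note, however, that this paper does not prove Lemma \ref{HM:model} at all — it is imported verbatim from Lemma 5.2 of \cite{HM:contacthom} — so your proposal is a self-contained proof of the cited result by the standard normal-form route, essentially the argument carried out in that reference, rather than an alternative to anything proved here.
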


Now that we have a specific local model we can begin doing computations on the symplectic homology of $\gamma$, as well obtaining an explicit form for the return map $\phi$ in terms of the Hamiltonian $H$. In order to do this we first look at the Reeb vector field. We then give an explicit description of the return map, and we finally go on to give the symplectization of this model. We prove that for an appropriate choice of symplectization and Hamiltonian we can satisfy the required conditions in the definition of symplectic homology. Namely, that the set corresponding to $\gamma$ is an isolated fixed point set for the time-one Hamiltonian flow on the symplectization. This will show that the local model is a suitable place to begin the proof of our isomorphisms. Finally we will show some technical conditions which we will need in the next step of our proof, Section \ref{new model}.\\

The first thing we do in our local model is make a time change. If we consider $\frac 1 T \alpha$ rather than $\alpha$, we have that the Reeb vector field of  $\frac 1 T \alpha$ is $TR$ where $R$ is the Reeb vector field of $\alpha$. The time-one flow of this new vector field is equal to the time-$T$ flow of the original vector field. This permits us to study the time-$T$ flow of the Reeb vector field of $\alpha$ by looking at the time-one flow of the Reeb vector field of $\frac 1 T\alpha$. Doing this allows us to avoid carrying along the constant $T$ in the many computations that follow as well as letting us adhere to the convention of studying time-one flow when we look at Hamiltonian Floer homology. Thus from this point forward we will assume without loss of generality that $\gamma$ has period-one with respect to $\alpha$ and due to Lemma \ref{HM:model} we have that $H_\theta(0) =1$.\\

Now we look at the Reeb vector field of our local model. Recall that $\alpha= Hd\theta +\lambda_0$ and that $d\alpha= dH\wedge d\theta +\sigma$ where $\sigma=d\lambda_0$ is the standard symplectic structure on $B^{2n}$. As one may easily verify, the Reeb vector field of $\alpha$ is
\begin{align}\label{RVF}
	R= \frac{1}{F}\left(\frac{\partial}{\partial\theta} -X_H\right),
\end{align}
where $\iota_{X_H}\sigma = -dH$ and $F= H-\lambda_0(X_H)$, noting that $F$ is nonzero at least on a small neighborhood of $\gamma$. In particular, we may consider this neighborhood to be $\Sigma$ if the radius of our $B^{2n}$ is taken to be small enough.\\
 
Now we look at the return map $\phi$. The periodic function $H$ defined as a part of the contact form $\alpha$ is the Hamiltonian which determines the return map in the following way. Note that $H$ is a one-periodic Hamiltonian on $B^{2n}$. The Hamiltonian flow of $-H$ in the extended phase space $S^1\times B^{2n}$ is given by computing the flow of $\frac{\partial}{\partial\theta} -X_H$. Then it is readily seen that this vector field and $R$ have the same integral curves, as we can obtain one from the other by multiplying by a nonzero function on $\Sigma$. Thus the time-one flow of $-X_H$, denoted $\phi^1_{-H}$, is the return map of following the Reeb flow from the slice $0 \times B^{2n}$ back to itself. Thus in this local model we have by definition that $\phi=\phi^1_{-H}$.\\

Now we take a symplectization of our model neighborhood which has the form $W =(1-\delta,1+\delta)\times \Sigma = (1-\delta,1+\delta)\times S^1 \times B^{2n}$ with coordinates $(r,\theta ,z)$. With some abuse of notation we will refer to both the sets $S^1\times 0\subset \Sigma$ and $1 \times S^1 \times 0 \subset W$ as $\gamma$ depending on the context. We take our symplectic form to be $\omega = d(r\alpha)$. Then for any Hamiltonian $f(r)$ that depends only on the $r$ coordinate the Hamiltonian vector field of $f$ with respect to $\omega$ is

\begin{align}\label{HVFomega}
	X_{\omega, f}=f'(r)R= \frac{f'}{F}\left(\frac{\partial}{\partial\theta} -X_H\right).
\end{align}

Note that if $f'(1) =1$, then this Hamiltonian vector field  $X_{\omega, f}$ agrees with the Reeb vector field on the level $r=1$, which corresponds to the contact manifold. Furthermore, if  $f'(1) =1$, the time-one fixed points of this vector field agree on that slice with the time-one fixed points of $R$.\\

It is at this point we recall that $\gamma$ is isolated in our setting, so we may choose $\Sigma$ such that it contains no closed Reeb orbits of period $t, t\in (1-\eta, 1+\eta)$ for some $\eta>0$. In addition, from this point forward we note that one can ensure that $f'(c) \in (1-\eta,1+\eta)$ for $c \in (1-\delta,1+\delta)$ which can be achieved for any appropriate choice of $f$ by simply shrinking $\delta$. We will need this condition as well as the convexity in order to ensure that passing to the symplectic setting doesn't introduce any time-one fixed points other than $\gamma$ as we will elaborate on in Lemma \ref{thm:one_periodic_are_reeb} below.\\

Drawing our attention to the time-one fixed points of $X_{\omega, f}$, it is the case that the time-one flow of $X_{\omega, f}$ has every point in $1\times S^1\times 0$ as a fixed point since it agrees with the Reeb vector field on the level $r=1$ and the time-one flow of $R$ has the points on $\gamma$ as fixed points. However, in order to satisfy the definition of symplectic homology we require that those are the only time-one fixed points of $X_{\omega, f}$.  From the definition of an isolated orbit we can prove the following Lemma connecting the dynamics of the model neighborhood $\Sigma$ to the dynamics of its symplectization.

\begin{Lemma}
\label{thm:one_periodic_are_reeb}
	Let $W$, $f$, and $\omega$ be as described above. A point $p \in W$ is a fixed point of the time-one flow of $X_{\omega, f}$, which we denote $\phi_{\omega, f}$, 	if 	and only if it lies on the Reeb orbit $\gamma = 1\times S^1\times 0$. That is $\gamma$ is an isolated fixed point set of $\phi^1_{\omega,f}$.
\end{Lemma}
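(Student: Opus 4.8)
The plan is to read everything off the explicit formula \eqref{HVFomega} for $X_{\omega,f}$ together with the normal form of Lemma \ref{HM:model}: on each $r$-slice the time-one flow of $X_{\omega,f}$ is a constant-speed reparametrization of the Reeb flow $\psi^t$, and then the isolatedness of $\gamma$ plus the strict monotonicity of $f'$ will force any fixed point onto $\gamma$.

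First I would note from \eqref{HVFomega} that $X_{\omega,f}=\frac{f'(r)}{F}\bigl(\frac{\partial}{\partial\theta}-X_H\bigr)$ has no $\frac{\partial}{\partial r}$-component, so each slice $\Sigma_c:=\{r=c\}$, canonically identified with $\Sigma$, is invariant under the time-$t$ flow $\phi^t_{\omega,f}$. On $\Sigma_c$ the positive constant $f'(c)$ factors out of $X_{\omega,f}$, giving $X_{\omega,f}|_{\Sigma_c}=f'(c)\,R$ with $R$ the Reeb field of $\alpha$; since rescaling a vector field by a positive constant reparametrizes its flow linearly, $\phi^t_{\omega,f}|_{\Sigma_c}=\psi^{f'(c)t}$ wherever defined. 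Hence a point $p=(c,\theta_0,z_0)$ is fixed by $\phi^1_{\omega,f}$ if and only if $\psi^{f'(c)}(p)=p$ in $\Sigma$, i.e. $p$ lies on a closed Reeb orbit of period $f'(c)$; moreover this orbit genuinely lies in $\Sigma$, since as a point set it is the loop $\{\phi^t_{\omega,f}(p):t\in[0,1]\}\subset\Sigma_c$.

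Next I would pin the point down. Since we have arranged $f'(c)\in(1-\eta,1+\eta)$ for all $c\in(1-\delta,1+\delta)$ and $\gamma$ is isolated — for periods in $(1-\eta,1+\eta)$ the only closed Reeb orbit in $\Sigma$ is $\gamma=S^1\times 0$ — any fixed point $p$ of $\phi^1_{\omega,f}$ must have $z_0=0$. Then Lemma \ref{HM:model} gives $X_H(\theta,0)=0$ (because $dH_\theta(0)=0$) and $F(\theta,0)=H_\theta(0)=1$, so $R=\frac{\partial}{\partial\theta}$ along $S^1\times 0$ and $X_{\omega,f}(c,\theta,0)=f'(c)\frac{\partial}{\partial\theta}$. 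Its time-one flow sends $(c,\theta_0,0)$ to $(c,\theta_0+f'(c),0)$, so $p$ being fixed forces $f'(c)\in\mathbb Z$; as $0<f'(c)<1+\eta\le 2$ once we take $\eta<1$, this means $f'(c)=1$, and since $f''>0$ makes $f'$ strictly increasing with $f'(1)=1$ we get $c=1$, i.e. $p\in 1\times S^1\times 0=\gamma$. For the converse, if $p\in\gamma$ then $p\in\Sigma_1$ and $\phi^1_{\omega,f}|_{\Sigma_1}=\psi^{f'(1)}=\psi^1$, which fixes $\gamma$ pointwise because $\gamma$ is a period-one closed Reeb orbit.

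The step I expect to need the most care is the use of isolatedness: I must be sure the closed Reeb orbit extracted from a fixed point really lies inside $\Sigma$, so that the hypothesis applies, and that ``isolated'' is being invoked in exactly the form fixed in the definition — that near period one the only closed Reeb orbit in $\Sigma$ is $\gamma$ itself, its iterates having periods bounded away from one once $\eta<1$. Everything else is a direct computation with \eqref{HVFomega} and the normal form of Lemma \ref{HM:model}.
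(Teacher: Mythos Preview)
Your proof is correct and follows essentially the same route as the paper's: both reduce to the identity $\phi^1_{\omega,f}|_{\Sigma_c}=\psi^{f'(c)}$, invoke isolatedness of $\gamma$ together with $f'(c)\in(1-\eta,1+\eta)$ to force the fixed point onto $S^1\times 0$, and then use $f''>0$ with $f'(1)=1$ to pin down $c=1$. The only cosmetic differences are that the paper spells out the tautology ``$\psi^\tau$ has a fixed point $\Leftrightarrow$ there is a period-$\tau$ closed Reeb orbit'' before using it, while you instead make the computation on $\{z=0\}$ explicit via $X_H(\theta,0)=0$ and $F(\theta,0)=1$; both arrive at $f'(c)\in\mathbb Z$ and hence $c=1$ by the same convexity argument.
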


\begin{proof}
	As we stated above, the points in $\gamma$ can be seen to be fixed points of $\phi_{\omega, f}$. So we need only show that if $p$ is a fixed point of $\phi_{\omega, f}$ then $p \in \gamma$. Let $\psi^t$ be the time-t flow of $R$. First we prove that $\gamma$ is an isolated Reeb orbit, that is there are no other Reeb orbits of period $\tau \in(1-\eta, 1+\eta)$ in the neighborhood $\Sigma$, if and only if $\psi^t$ has no fixed points other than $\gamma$ for $t\in(1-\eta, 1+\eta)$.\\
	
	We prove this by showing that $\psi$ has a fixed point of time-$\tau$ if and only if it has a Reeb orbit of period $\tau$. First, if $\psi$ had a Reeb orbit  $l(t)$ of period $\tau$, then for any point $p_0$ on that Reeb orbit, $p_0$ is a period-$\tau$ fixed point of the Reeb flow. This follows from the definition of being a period-$\tau$ closed Reeb orbit. Let $l(t_0)=p$;  then if $l$ is a closed period $\tau$ Reeb orbit it follows that $\psi^{t_0+\tau}(p) = l(t_0+\tau) =l(t_0) =\psi^{t_0}(p)$, where $p$ is an initial condition for our Reeb orbit. From this we see that $\psi^{\tau}(\psi^{t_0}(p))=\psi^{t_0}(p)$ showing us that $\psi^{t_0}(p)=p_0$, is a time-$\tau$ fixed point. On the other hand, if there is some $p$ such that $\psi^{\tau}(p) =p$, then for some $p_0= \psi^{t_0}(p)$, we have that $\psi^{\tau}(p_0)=\psi^{\tau}(\psi^{t_0}(p))=\psi^{t_0+\tau}(p)=\psi^{t_0}(\psi^{\tau}(p))=\psi^{t_0}(p)=p_o$, showing that $\phi^t(p) =l(t)$ is a closed period-$\tau$ Reeb orbit.\\
	
	Thus, we now know that the points on $\gamma$ are the only fixed points for the Reeb flow of time $t\in(1-\eta ,1+\eta)$. From this, we can see that $\phi_{\omega, f}$ only has time-one fixed points on $\gamma$, due to the fact that on the level $r=c$, we have that $X_{\omega, f}=f'(c)R$ and thus $ \phi_{\omega, f}(c,\theta, z)=(c,\psi^{f'(c)}(\theta, z))$. By our convexity assumptions for $f$ together with $f'(1)=1$ we know that $f'$ is only equal to 1 on the level $r=1$. In addition, we have $f'(c)\in (1-\eta, 1+\eta)$ as stated above, and as such $\psi^{f'(c)}(\theta, z)=(\theta, z)$ if and only if $c=1$ and $z=0$; that is if and only if $(c,\theta, z)\in \gamma$. Thus, the fixed points of $\phi_{\omega, f}$ are exactly those on $\gamma$.
\end{proof}

\begin{Remark}
\label{H fixes origin}
	As a consequence of the proof of the above Lemma, we can see that the time-t Hamiltonian flow of $-H$ on $B^{2n}$ has the origin as an isolated fixed point for $t\in (1-	\eta, 1+\eta)$, $\eta>0$, and has the origin as a fixed point for all time. Additionally, this is also true for the return map on any slice $\theta \times B^{2n} \subset \Sigma$.
\end{Remark}

The reason we are interested in $-H$ is because $\phi^1_{-H}$is the return map of the Reeb flow. Recall, the Hamiltonian vector field of $-H$ in the extended phase space is $\frac{\partial}{\partial\theta}-X_H$; as such both the Reeb vector field as well as the Hamiltonian vector field of $f$ with respect to $\omega$ on the level $r=c$ are both smooth functions times the vector field $\frac{\partial}{\partial\theta}-X_H$. Thus, these three flows have the same integral curves. When we say the Hamiltonian flow $f$ has the same integral curves we are referring to restricting the Hamiltonian flow of $f$ to the level $c\times S^1 \times B^{2n}$.\\

In particular, there is some time $T_0$ such that for any time $t\in(T_0-\eta, T_0+\eta)$ we have that the time-$t$ flow of $-X_H$, which we write $\phi^t_{-H}$, has the origin as an isolated fixed point. We have more information here, though, which is that the $\frac 1 F$ term which scales the Reeb vector field is equal to 1 along $\gamma \subset \Sigma$. From this it follows that along $\gamma$  the time-one flow of $-H$ in the extend phase space is equal to the Reeb flow along $\gamma$. Thus we know that $\phi_{-H}^t$ has the origin as an isolated fixed point for $t\in(1-\eta, 1+\eta)$. Furthermore, we know that the integral curves of $R$ agreeing with those of the Hamiltonian flow of $-H$ gives us that the origin in $B^{2n}$ is a fixed point of $\phi^{\theta}_{-H}$ for all $\theta$ though it may not be isolated. As a final comment, we can look at the return map for any slice $\theta_0\times B^{2n}$. Here, the return map is $\phi_{-H}^{1+\theta_0}\circ(\phi_{-H}^{\theta_0})^{-1}(z)$ and through the same logic we can deduce that $\phi_{-H}^{t+\theta_0}\circ(\phi_{-H}^{\theta_0})^{-1}(z)$ has the origin as an isolated fixed point for $t\in (1-\eta, 1+\eta)$.We will use these properties when we introduce the new local model in the following section.\\

\subsection{New local model }
\label{new local model }

Here we introduce a new local model  for the symplectization of our neighborhood of a Reeb orbit.

\begin{Proposition}
\label{new model}
Let $W$, $\omega, f$ be as they are defined in the assumptions of Lemma \ref{thm:one_periodic_are_reeb}. Let $H$ be the Hamiltonian discussed in Section \ref{local model:HM}. We introduce $\omega_H= d(r+H)\wedge d\theta+\sigma$ as a new symplectic form for the model space $W$. Then the Hamiltonian Floer homology of $f$ with respect to $\omega$ is isomorphic to the Hamiltonian Floer homology of $f$ with respect to $\omega_H$. From the definition of symplectic homology this implies the following two isomorphisms:
\[
	SH_*(\gamma) \cong HF^{loc}_*(1\times S^1\times 0,\omega_H,f) \qquad \text{and}\qquad 	SH_*^{S^1}(\gamma) \cong HF^{S^1}_*(1\times S^1\times 0,\omega_H,f). 
\] 
 
\end{Proposition}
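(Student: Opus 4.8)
The plan is to build an explicit homotopy of symplectic forms from $\omega = d(r\alpha)$ to $\omega_H = d(r+H)\wedge d\theta + \sigma$ on the model space $W = (1-\delta,1+\delta)\times S^1\times B^{2n}$, keeping the Hamiltonian $f(r)$ fixed throughout, and then invoke the invariance property (LFH1) (respectively (LEFH1)) to conclude that the local Floer homology does not change. The desired isomorphisms for $SH_*(\gamma)$ and $SH_*^{S^1}(\gamma)$ then follow immediately from the definitions in Section~\ref{SH:def}. So the real content is: (i) exhibit the family $\omega_s$; (ii) check each $\omega_s$ is a genuine symplectic form on $W$ (closed is automatic since all the forms in sight are exact or manifestly closed; nondegeneracy needs a short computation, possibly after shrinking $\delta$ and the radius of $B^{2n}$); and (iii) verify that the fixed-point set of $\phi^1_{\omega_s,f}$ is \emph{uniformly isolated} and equal to $\gamma = 1\times S^1\times 0$ for all $s$.

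First I would write $\omega = d(r\alpha) = d(rH)\wedge d\theta + d(r\lambda_0) = d(rH)\wedge d\theta + r\sigma + dr\wedge\lambda_0$, and compare this with $\omega_H = d(r+H)\wedge d\theta + \sigma = dr\wedge d\theta + dH\wedge d\theta + \sigma$. A natural interpolation is to linearly connect the two expressions, e.g. $\omega_s = d\big((1-s)rH + sH + s r\big)\wedge d\theta + \big((1-s)r + s\big)\sigma + (1-s)\,dr\wedge\lambda_0$, or some mild reparametrization thereof that keeps the $d\theta$-component of the form behaving well near $r=1$; one is free to choose the path that makes the bookkeeping cleanest. Since $r$ ranges over a small interval around $1$ and $z$ over a small ball, the coefficient of $\sigma$ stays positive and the $dr\wedge d\theta$ and $d\theta$-wedge-exact-form pieces are controlled, so $\omega_s\wedge(\omega_s)^n\neq 0$ on $W$ after shrinking; I expect this to be routine but it must be done carefully because the form mixes $r$ and the base Hamiltonian $H$.

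The main obstacle is step (iii): I must show that for \emph{every} $s\in[0,1]$ the time-one flow of $X_{\omega_s,f}$ has fixed-point set exactly $\gamma$, and that these sit inside a fixed neighborhood independent of $s$. The strategy mirrors Lemma~\ref{thm:one_periodic_are_reeb}: compute $X_{\omega_s,f}$ from $\iota_{X_{\omega_s,f}}\omega_s = -df = -f'(r)\,dr$, and show that on each slice $r=c$ the flow is still (a positive multiple of) the flow of $\frac{\partial}{\partial\theta} - X_H$, so its integral curves coincide with the Reeb integral curves; then the period-one fixed points are governed by whether $f'(c)$ lands in $(1-\eta,1+\eta)$ and by the isolation of $\gamma$ as a Reeb orbit, exactly as before. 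The point where care is needed is that the scaling function relating $X_{\omega_s,f}$ to $\frac{\partial}{\partial\theta}-X_H$ changes with $s$ (for $\omega_H$ it is simply $f'(r)$, with the annoying $1/F$ factor gone), so I need this scaling to remain nonvanishing and of the correct sign across the whole family, which again is arranged by shrinking $\delta$ and $B^{2n}$ using that $F = H - \lambda_0(X_H)$ equals $1$ along $\gamma$ and that $f'$ is close to $1$. Once uniform isolation is established, (LFH1) and (LEFH1) close the argument with no further work.
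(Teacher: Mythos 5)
Your proposal is correct and follows essentially the same route as the paper: the paper's proof also takes the linear interpolation $\omega_s$ between $\omega$ and $\omega_H$ with $f(r)$ fixed, checks nondegeneracy near $\gamma$ (via $\iota_{\partial/\partial r}\omega_s$ being nonzero along $\gamma$, where $H=1$ and $\lambda_0=0$), and establishes uniform isolation of the fixed point set by observing that $X_{\omega_s,f}=\bigl((1-s)f'(r)+s\,f'(r)/F\bigr)\bigl(\frac{\partial}{\partial\theta}-X_H\bigr)$ is an $s$-dependent time change of the same characteristics, with the scaling factor kept in $(1-\eta,1+\eta)$ on a neighborhood independent of $s$ by compactness, before applying (LFH1) and (LEFH1). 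The only places you leave as "routine" (nondegeneracy after shrinking, and the fixed-point bookkeeping in the $\theta$- and $z$-directions using convexity of $f$ and Remark \ref{H fixes origin}) are exactly the computations the paper carries out, so no genuinely different idea is involved.
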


\begin{proof}
First we show that the fixed point set of the time-one map of the Hamiltonian flow of $f$ with respect to $\omega_H$ is $\gamma=1\times S^1\times 0$. We denote this Hamiltonian flow as $\phi^1_{\omega_H, f}$. In this model the Hamiltonian vector field of $f$ is $X_{\omega_H, f}= f'(r)(\frac{\partial}{\partial \theta} -X_H)$. This is an advantage of the new model as we can compute the Hamiltonian flow of $f$ in terms of the Hamiltonian flow of $-H$ on $B^{2n}$. The time-$t$ flow of $X_{\omega_H ,f}$ is 
\[
	(r,\theta ,z) \mapsto (r, \theta+ f'(r)t, \phi_{-H}^{f'(r)t+\theta}\circ(\phi_{-H}^\theta)^{-1}(z)).
\] So, if we are looking for time-one fixed points, first we need to see when  $\theta+ f'(r)=\theta$. This only happens when $f'(r)\in\mathbb Z$. By definition, $f'(1)=1$, and $f'' >0$. As such this only happens when $r=1$. Thus any fixed point must be in $1\times S^1\times B^{2n}$. Now, drawing our attention to the last coordinate, we are looking for fixed points of $ \phi_{-H}^{1+\theta}\circ(\phi_{-H}^\theta)^{-1}$. By the discussion in Remark \ref{H fixes origin} we know that $0\in B^{2n}$ is the only fixed point of this map. Thus the fixed point set of $\phi_{\omega_H, f}^1$ is $\gamma= 1\times S^1\times 0$.\\

Next let us recall our original symplectic form $\omega =d(r\alpha)$ where $\alpha = Hd\theta + \lambda_0$ and $d\lambda_0 =\sigma$ is the standard symplectic form on $B^{2n}$. Then observe the following, $\omega|_\Sigma = \omega_H|_\Sigma$ and $\frac 1 c\omega|_{\Sigma_c} = \omega_H|_{\Sigma_c}$, where $\Sigma$ is the level $r=1$ and $\Sigma_C$ is the level $r=c$. Furthermore, note that in both models the return map for every level is $\phi^1_{-H}$.\\

Now we produce an isotopy between the symplectic forms which leave $\gamma$ uniformly isolated. Let $\omega_0=\omega_H$ and let $\omega_1=\omega$, then we take $\omega_s=(1-s)\omega_0+s\omega_1, s\in [0,1]$. We claim that $\omega_s$ is symplectic near $\gamma=1 \times S^1\times 0$. First, as $\omega_0|_\Sigma =\omega_1|_\Sigma$, we have $\omega_s|_\Sigma= dH \wedge d\theta+\sigma$ which is a maximally nondegenerate on $\Sigma$. Thus in order to show $\omega_s$ is symplectic near $\gamma$ we need only show that $\iota_{\frac{\partial}{\partial r}}\omega_s \neq 0$. Here we have $\iota_{\frac{\partial}{\partial r}}\omega_s = (1-s)d\theta +sHd\theta+s\lambda_0 =(1+s(H-1))d\theta +s\lambda_0$ as $H$ does not depend on $r$. Now at $0\in B^{2n}$ we have $H=1$ for all $\theta$ and $\lambda_0 =0$, which means that along $\gamma$ we have $\iota_{\frac{\partial}{\partial r}}\omega_s =d\theta \neq 0$. Thus $\omega_s$ is symplectic near $\gamma$ for all $s\in [0,1].$\\

Next we show that $\gamma$ is a uniformly isolated fixed point set for $\phi^1_{\omega_s,f}$. With $\Sigma_c$ as above, we recall that $c\omega_0|_{\Sigma_c}= \omega_1|_{\Sigma_c}$ and thus $\omega_s|_{\Sigma_c} = ((1-s)c+s)\omega_1|_{\Sigma_c}$. This implies that the Hamiltonian flow of $f$ with respect to $\omega_s$ has the same unparameterized characteristics on each $\Sigma_c$ for every $s\in [0,1]$. Thus the flow of $\phi^1_{\omega_s, f}$ for any initial condition is just a time change of $\phi^1_{\omega_1,f}$. The only thing we need to verify is that whatever time change we make is small enough that our fixed point set remains uniformly isolated. In particular we know that from the discussion in Remark \ref{H fixes origin} that for each $\theta_0\in S^1$ we have that the origin in $B^{2n}$ is a fixed point of $\phi_{-H}^{t+\theta}\circ(\phi_{-H}^\theta)^{-1}$ for all time $t\in (1-\eta, 1+\eta)$. Now the $\eta$ here may depend on $\theta_0$ but since $S^1$ is compact there is some $\eta$ such that this is true for all $\theta_0 \in S^1$. Now lets look at the Hamiltonian vector field for $f$ with respect to $\omega_s$,
\[
	X_{\omega_s, f} = (1-s)f'(r)+s\frac{f'(r)}{F}\left(\frac{\partial}{\partial\theta}-X_H\right).
\] If we are able to show that we can make $\left|1-\left((1-s)f'(r)+s\frac{f'(r)}{F}\right)\right|<\eta$ then we will be able to use the fact that $\phi_{-H}^{t+\theta}\circ(\phi_{-H}^\theta)^{-1}$ has an isolated fixed point at the origin for $t \in (1-\eta, 1+\eta)$ in order to show that $\phi_{\omega_s, f}$ has $\gamma$ as an isolated fixed point set. To see this, first note $f'(r)$ is equal to 1 when $r=1$. Secondly $F$ is a smooth function of $\theta$ and $z$. We know that when $z=0$ that $F=1$. Now looking at $(1-s)f'(r)+s\frac{f'(r)}{F}$, we can see that it is a smooth function of $s, r,\theta$ and $z$ and that along $\gamma$ it is equal to $1$ for all $s\in [0,1]$. Then using the fact that $[0,1]$ is compact, together with the fact this function is smooth, one can find a neighborhood of $\gamma$ which is independent of $s$ such that for all $s\in [0,1]$ we have $(1-s)f'(r)+s\frac{f'(r)}{F} \in (1-\eta, 1+\eta)$. We will continue to call this new neighborhood $W$, although it may require us to shrink $\delta$ or the radius of $B^{2n}$.\\

Now we finish the proof. For some fixed $r=c$, we know that each return map has $0\in B^{2n}$ as an isolated fixed point for time $t\in (1-\eta, 1+\eta)$. We know that the time change for any unparameterized curve lies in $(1-\eta, 1+\eta)$ for all $s\in [0,1]$. As such, for the $z$ coordinate the only possible fixed point is the origin. Thus on the level $r=c$ our fixed point set must lie in $c\times S^1\times 0$. On this set $F=1$ so the time change for our vector field along the $\theta$ coordinate becomes $f'(c)$. But by the convexity of $f$ this is only equal to one when $r=1$. So the only possible fixed points of $\phi^1_{\omega_s, f}$ lie in $\gamma=1\times S^1\times 0$. Finally, since $\omega_s|_\Sigma=\omega_1|_\Sigma$ we know that $\gamma$ is a fixed point set for all $s$. This shows that $\gamma$ is a uniformly isolated fixed point set for $\phi^1_{\omega_s, f}$ and hence by applying (LFH1), as well as (LEFH1) we know that
\[
	 HF^{loc}_*(\gamma,\omega, f) \cong HF^{loc}_*(\gamma,\omega_H,f)\qquad \text{and}\qquad  HF^{S^1}_*(\gamma,\omega, f)\cong HF^{S^1}_*(\gamma,\omega_H,f)\
\] completing the proof.

\end{proof}

We have shown that our new local model computes the same symplectic homology as the original. Next we fix notation which we will use for the remainder of the paper and go on to discuss the advantages of this new local model.\\

We first make the change of notation. The expression $\phi^1_{-H}$ is somewhat cumbersome. As such, from this point forward we will define $H$ as $-H$ instead. The effects of this change of notation are as follows. First $\alpha = -Hd\theta + \lambda_0$ and $\omega_H = d(r-H)\wedge d\theta + \sigma$, the return map of the flow is $\phi^1_{H}$, and finally the Hamiltonian vector field of $f$ with respect to $\omega_H$ is now $f'(r)\left(\frac{\partial}{\partial\theta} +X_H \right)$.\\

Introducing this local model was the first step in the proof of our theorems. The model not only makes computation simpler, but it is also removed the requirements imposed on the Hamiltonian $H$ in Lemma \ref{HM:model}. In the original model, it is required that $H$ is a nonzero constant along $\gamma$ in order for the differential form $d(r\alpha)$ to be a symplectic form in a neighborhood of $\gamma$. In our new local model, $\omega_H$ is always symplectic, independent of $H$, which will allow us to make more types of modification to the Hamiltonian $H$.\\

Our next goal is to replace the given Hamiltonian $H$ by something autonomous. Recall this is required in order to ultimately apply the K{\"u}nneth formula. Before we do this we develop two tools which enable us to replace the Hamiltonian $H$ with a new Hamiltonian, while preserving all relevant homology up to isomorphism. The rest of the section is devoted to the construction of these tools.\\

\subsection{Modifications which give isomorphisms in local homology}
\label{sec:modification}

In this section we prove two lemmas showing that we are allowed to apply certain modifications to a Hamiltonian $H$ on $B^{2n}$ while still preserving the symplectic homology groups up to isomorphism. More specifically, both of our local symplectic homology groups are isomorphic to some Floer homology of a Hamiltonian $f$ on $(1-\delta,1+\delta) \times S^1\times B^{2n}$ with the symplectic form $\omega_H= d(r-H)\wedge d\theta+\sigma$. What we will show is that under suitable replacements of $H$ by a Hamiltonian $K$ which have isomorphic Hamiltonian Floer homology on $B^{2n}$, that there is a corresponding isomorphism for the relevant symplectic homology groups as calculated on the local model  with the Hamiltonian $f$ and symplectic form $\omega_H$ to the associated Floer homology calculated on the local model  with the Hamiltonian $f$ and the symplectic form $\omega_K= d(r-K)\wedge d\theta +\sigma$.\\

The first of the two lemmas states that picking a suitable family $H^s$ which leaves $0\in B^{2n}$ uniformly isolated gives us a family of  symplectic forms $\omega_s=d(r-H^s)\wedge d\theta+\sigma , s\in [0,1]$ where the fixed point set $\gamma$ of $f$ with respect to $\omega_s$ remains uniformly isolated as we let $s$ vary from 0 to 1. This induces an isomorphism of the associated Floer homology groups to the original local symplectic homology groups even if the time-one map of $f$ with respect to $\omega_s$ changes throughout this path. The second Lemma shows that if we replace $H$ by some Hamiltonian $K$ such that $\phi^1_H=\phi^1_K$, then there is a symplectomorphism of the local model  $\psi$ fixing $r$ such that $\psi^*\omega_H=\omega_K$. This also induces an isomorphism of the associated Floer homology groups to the original symplectic homology groups. This second lemma can be thought of as showing that the symplectic homology of $\gamma$ is determined by the return map of the Reeb flow on a slice $\theta_0\times B^{2n}$.\\

\begin{Lemma}
\label{thm:lemma1}

Let $H^s_\theta$ be a family of one-periodic Hamiltonians where $s,\in [0,1], \theta \in S^1$ with the associated symplectic forms $\omega_s = d(r-H^s)\wedge d \theta+\sigma$ defined on $W =(1-\delta,1+\delta)\times S^1\times B^{2n}$ with coordinates $(r,\theta, z)$. Let $f(r)$ be Hamiltonian of the type used in Lemma \ref{thm:one_periodic_are_reeb}. If $0 \in B^{2n}$ remains a uniformly isolated fixed point of $\phi^1_{H^s}$ for all $s\in [0,1]$ and $\phi_{H^s}^\theta(0)=0$ for all $s,\theta\in [0,1]$ then the set $\gamma = 1\times S^1 \times 0$ is the fixed point set of the time-one flow of $f$ with respect to $\omega_s$and remains uniformly isolated for all $s$. 
\end{Lemma}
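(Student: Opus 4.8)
The plan is to build an explicit isotopy of symplectic forms on $W$ and show that $\gamma = 1\times S^1\times 0$ stays uniformly isolated along it, so that we may apply property (LFH1) (and (LEFH1)). The forms are already given as $\omega_s = d(r-H^s)\wedge d\theta + \sigma$, so the two things to verify are: (i) each $\omega_s$ is symplectic on a neighborhood of $\gamma$ that can be taken independent of $s$; and (ii) the fixed point set of $\phi^1_{\omega_s, f}$ equals $\gamma$ for every $s$, inside a neighborhood independent of $s$. Step (i) mirrors the computation in the proof of Proposition \ref{new model}: on each slice $\Sigma$ one has $\omega_s|_\Sigma = dH^s\wedge d\theta + \sigma$, which is nondegenerate, and $\iota_{\partial/\partial r}\omega_s = d\theta - dH^s$; since $H^s_\theta(0)=1$ is forced along $\gamma$ (equivalently $dH^s$ vanishes in the fibre directions along $\gamma$, because $\phi^\theta_{H^s}(0)=0$ for all $\theta$), along $\gamma$ this contraction equals $d\theta \neq 0$. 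By smoothness of the family in $(s,r,\theta,z)$ and compactness of $[0,1]$ this persists on a uniform neighborhood, possibly after shrinking $\delta$ and the radius of $B^{2n}$.

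For step (ii) I would compute the Hamiltonian vector field $X_{\omega_s, f}$. Because $\omega_s = d(r-H^s)\wedge d\theta + \sigma$ has the same structural form as $\omega_H$ in Proposition \ref{new model}, one gets $X_{\omega_s, f} = f'(r)\bigl(\tfrac{\partial}{\partial\theta} - X_{H^s}\bigr)$, whose time-$t$ flow sends $(r,\theta,z)$ to $\bigl(r,\ \theta + f'(r)t,\ \phi^{f'(r)t+\theta}_{-H^s}\circ(\phi^\theta_{-H^s})^{-1}(z)\bigr)$. For a time-one fixed point we first need $f'(r)\in\mathbb Z$, which by $f'(1)=1$ and $f''>0$ forces $r=1$; then in the $z$-coordinate we need $\phi^{1+\theta}_{-H^s}\circ(\phi^\theta_{-H^s})^{-1}$ to fix $z$. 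The hypothesis that $0$ is a \emph{uniformly} isolated fixed point of $\phi^1_{H^s}$ — together with $\phi^\theta_{H^s}(0)=0$ and a compactness argument over $\theta\in S^1$ and $s\in[0,1]$ exactly as in Remark \ref{H fixes origin} and the proof of Proposition \ref{new model} — gives a single neighborhood of $0$ in $B^{2n}$, uniform in both $\theta$ and $s$, in which $z=0$ is the only fixed point of that return map. Hence on the uniform neighborhood $W$ (after the shrinking already performed in step (i)) the fixed point set of $\phi^1_{\omega_s, f}$ is exactly $\gamma$, for every $s$; and since $\omega_s|_\Sigma$ is $s$-independent, $\gamma$ is genuinely a fixed point set for all $s$, so it is uniformly isolated.

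The main obstacle I anticipate is extracting the uniform-in-$s$ neighborhood in step (ii): the hypothesis only says $\phi^1_{H^s}$ has $0$ uniformly isolated (uniform in $s$ for the time-one map), but the flow formula above involves the conjugated return maps $\phi^{1+\theta}_{-H^s}\circ(\phi^\theta_{-H^s})^{-1}$ for all $\theta$, so one must promote uniformity in $s$ of the time-one map to simultaneous uniformity in $(s,\theta)$ of this whole family of maps. This is handled by the same device used elsewhere in the paper — conjugacy of the $\theta_0$-slice return map to the $0$-slice return map, plus compactness of $S^1\times[0,1]$ — but it should be spelled out carefully, since it is the one place where the strength of the word "uniformly" in the hypothesis is actually used. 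Once this is in hand, the conclusion is immediate from (LFH1) and (LEFH1).
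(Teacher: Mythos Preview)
Your approach is correct and is essentially the paper's own proof: write the time-one flow of $f$ with respect to $\omega_s$ explicitly, use $f'(1)=1$ and $f''>0$ to force $r=1$, and then use one-periodicity of $H^s$ to rewrite the $z$-coordinate return map as the conjugate $\phi^\theta_{H^s}\circ\phi^1_{H^s}\circ(\phi^\theta_{H^s})^{-1}$, which together with $\phi^\theta_{H^s}(0)=0$ forces $z=0$.

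Two small corrections. First, your step (i) is unnecessary: as remarked right after Proposition~\ref{new model}, the form $d(r-H)\wedge d\theta+\sigma$ is symplectic for \emph{any} $H$, and in particular the claim ``$H^s_\theta(0)=1$ is forced along $\gamma$'' is neither implied by the hypotheses nor needed. Second, with the sign convention $\omega_s=d(r-H^s)\wedge d\theta+\sigma$ of the lemma, the Hamiltonian vector field is $f'(r)\bigl(\partial_\theta+X_{H^s}\bigr)$, not $-X_{H^s}$; correspondingly the flow in the $z$-slot involves $\phi_{H^s}$, not $\phi_{-H^s}$.

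On your anticipated obstacle: the conjugacy already does almost all the work, and the paper treats it this way. Once you know the $\theta$-slice return map equals $\phi^\theta_{H^s}\circ\phi^1_{H^s}\circ(\phi^\theta_{H^s})^{-1}$, its fixed points are precisely the $\phi^\theta_{H^s}$-image of the fixed points of $\phi^1_{H^s}$, and since $\phi^\theta_{H^s}(0)=0$ that image is $\{0\}$. The only residual use of compactness of $S^1\times[0,1]$ is the trivial one of ensuring that the images $\phi^\theta_{H^s}(U)$ of the uniform isolating neighborhood $U$ for $\phi^1_{H^s}$ jointly contain a fixed ball about $0$; the paper leaves this implicit.
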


\begin{proof}

The time-t flow of $f(r)$ with respect to $\omega_s$ is 
\[
	\phi^t_{f,\omega_s} (r,\theta, z)= (r, f't+\theta, \phi_{H^s}^{f't+\theta}\circ(\phi_{H^s}^{\theta})^{-1}(z)).
\] In particular, we want to look at the time-one flow which is 
\[
	\phi^1_{\omega_s, f} (r,\theta, z)= (r, f'+\theta, \phi_{H^s}^{f'+\theta}\circ(\phi_{H^s}^{\theta})^{-1}(z)).
\] 
Here, we want to identify the fixed point set of the time-one map in order to show it remains uniformly isolated for all $s\in [0,1]$. First we look at the $\theta$ coordinate. In order to have a fixed point, we would have to have $f'+\theta=\theta$, which is true when $f'\in \mathbb Z$. Now $f'(r)=1$ only when $r=1$ by assumption. It is close to 1 otherwise. So in order to be a fixed point we must be in the set $1 \times S^1\times B^{2n}$ and on this set $f'=1$. So looking at the $z$ coordinate we find a fixed point if and only if $\phi_{H^s}^{1+\theta}\circ\phi_{H^s}^{-\theta}(z)=z$. An important fact here is that because each $H^s$ is one-periodic we have that $\phi^{1+\theta}_{H^s}=\phi^{\theta}_{H^s}\circ\phi^1_{H_s}$. So we can say on each $\theta$ slice that the time-one maps are conjugate to the time one map at $\theta=0$. That is $\phi_{H^s}^{1+\theta}\circ\phi_{H_s}^{-\theta}(z) = \phi_{H^s}^{\theta}\circ\phi_{H^s}^1\circ(\phi_{H^s}^{\theta})^{-1}(z).$ Now by assumption $\phi^1_{H^s}$ has $0 \in B^{2n}$ is a uniformly isolated fixed point for all $s\in [0,1]$. At each $\theta$ we are conjugating this map by a diffeomorphism so we have exactly one fixed point which is $\phi^\theta_{H^s}(0)$. This is taken to be 0 by assumption. Thus $ \phi_{H^s}^{\theta}\circ\phi_{H^s}^1\circ(\phi_{H^s}^{\theta})^{-1}(z) =z$ if and only if $z=0$. This shows that the fixed point set of the time-one flow of $f$ with respect to $\omega_s$ is in fact $\gamma$ for each $s$, so the fixed point set is uniformly isolated for all $s\in[0,1]$ as claimed.

\end{proof}

\begin{Corollary}
\label{l1c}
Under the conditions described in Lemma \ref{thm:lemma1} we have
\[
 HF^{loc}_*(\phi^1_{H^0})\cong HF^{loc}_*(\phi^1_{H^1}) \qquad \text{and} \qquad HF^{loc}_*(\gamma, \omega_{H^0},f) \cong HF^{loc}_*(\gamma, \omega_{H^1},f), 
\]
as well as the corresponding statements for $S^1$ equivariant Floer Homology.
\end{Corollary}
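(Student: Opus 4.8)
The plan is to deduce Corollary \ref{l1c} directly from Lemma \ref{thm:lemma1} together with the continuation property (LFH1) (and its equivariant analogue (LEFH1)), with essentially no new computation. There are two isomorphisms to produce. For the second one, $HF^{loc}_*(\gamma, \omega_{H^0},f) \cong HF^{loc}_*(\gamma, \omega_{H^1},f)$, Lemma \ref{thm:lemma1} has already done all the work: it exhibits the family of symplectic forms $\omega_s = d(r-H^s)\wedge d\theta + \sigma$ on $W$ and shows that $\gamma = 1\times S^1\times 0$ is a uniformly isolated fixed point set of $\phi^1_{\omega_s,f}$ for all $s\in[0,1]$, in a neighborhood independent of $s$. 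This is exactly the hypothesis of (LFH1) with the Hamiltonian held fixed (equal to $f$) and the symplectic form varying; applying (LFH1) gives the isomorphism at $s=0$ and $s=1$. The equivariant statement follows the same way from (LEFH1), using that $f$ is autonomous so that the equivariant local Floer homology is defined.

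For the first isomorphism, $HF^{loc}_*(\phi^1_{H^0})\cong HF^{loc}_*(\phi^1_{H^1})$, the point is that the hypotheses of Lemma \ref{thm:lemma1} — namely that $0\in B^{2n}$ is a uniformly isolated fixed point of $\phi^1_{H^s}$ for all $s$, in a neighborhood independent of $s$ — are precisely what is needed to apply (LFH1) on $(B^{2n},\sigma)$ to the family of Hamiltonians $H^s$ (here the symplectic form $\sigma$ is held fixed and the Hamiltonian varies). Note the hypothesis $\phi^\theta_{H^s}(0)=0$ is not even needed for this first isomorphism; it is only needed for the statement about $\gamma$. So the first isomorphism is immediate from uniform isolation plus (LFH1). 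Again the $S^1$-equivariant version uses (LEFH1), which requires $H^s$ autonomous; if the $H^s$ are not assumed autonomous one should either restrict the equivariant statement to that case or invoke the equivariant continuation invariance in the form appropriate to the construction in Section \ref{sec:EFH}. I would phrase the proof to make clear which continuation property is being invoked at each step.

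Concretely I would write: ``By hypothesis $0\in B^{2n}$ is a uniformly isolated fixed point of $\phi^1_{H^s}$ for $s\in[0,1]$, so (LFH1) applied to the family $(H^s,\sigma)$ on $B^{2n}$ gives $HF^{loc}_*(\phi^1_{H^0})\cong HF^{loc}_*(\phi^1_{H^1})$, and likewise (LEFH1) gives the equivariant statement. For the second pair of isomorphisms, Lemma \ref{thm:lemma1} shows that $\gamma=1\times S^1\times 0$ is a uniformly isolated fixed point set of $\phi^1_{\omega_s,f}$ for all $s\in[0,1]$ in a fixed neighborhood; applying (LFH1) with the Hamiltonian $f$ fixed and the symplectic form $\omega_s$ varying yields $HF^{loc}_*(\gamma,\omega_{H^0},f)\cong HF^{loc}_*(\gamma,\omega_{H^1},f)$, and (LEFH1) yields the equivariant version.''

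I do not expect any real obstacle here — the corollary is a formal consequence of the lemma and the stated continuation properties. The only point requiring a sentence of care is the equivariant case: (LEFH1) is stated for autonomous Hamiltonians, so one should note that in all applications in this paper the relevant Hamiltonians ($f$, and the autonomous replacement $\hat H$ produced later) are indeed autonomous, so the equivariant continuation applies. I would add a remark to that effect rather than restating the lemma with an autonomy hypothesis baked in.
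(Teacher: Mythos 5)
Your proposal is correct and is essentially the paper's own argument: the paper's proof simply cites (LFH1) (and (LEFH1) in the equivariant case), applied to the family $H^s$ on $(B^{2n},\sigma)$ for the first isomorphism and to the family $\omega_s$ with $f$ fixed on $W$ (using the uniform isolation of $\gamma$ established in Lemma \ref{thm:lemma1}) for the second. Your added remarks about which hypothesis is needed where, and about autonomy in the equivariant case, are consistent with the paper's usage.
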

\begin{proof}
Both isomorphisms are a consequence of (LFH1), or in the equivariant setting (LEFH1). 
\end{proof}

As a short remark, this means that if we wish to modify our Hamiltonian $H $ on $B^{2n}$] via a path of Hamiltonians $H^s$ that leave the origin uniformly isolated, then we are automatically leaving $\gamma$ uniformly isolated with respect to $\omega_s$. We next prove Lemma \ref{thm:lemma2} which shows that the return map determines symplectic homology up to a shift in degree.\\

\begin{Lemma}
\label{thm:lemma2}

Let $H$ and $K$ be one-periodic Hamiltonians on $B^{2n}$ such that $\phi^1_H= \phi^1_K$, and let $\omega_H = d(r-H)\wedge d\theta+\sigma$ and $\omega_K = d(r-K)\wedge d\theta+\sigma$ be the associated symplectic forms defined on the model space $W=(1-\delta, 1+\delta)\times S^1\times B^{2n}$ with coordinates $(r,\theta,z)$. Then there exists a symplectomorphism $\psi:W \to W$ which fixes the $r$ coordinate such that $(\psi^{-1})^*\omega_K=\omega_H$ and  as such $(\psi^{-1})^*f(r)=f(r)$.
\end{Lemma}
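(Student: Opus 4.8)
The plan is to build $\psi$ explicitly from an isotopy interpolating between the Hamiltonian flows of $H$ and $K$. Since $\phi^1_H = \phi^1_K$, the loop $g^t := \phi^t_K \circ (\phi^t_H)^{-1}$ of diffeomorphisms of $B^{2n}$ is based at the identity at $t=0$ and returns to the identity at $t=1$; it also fixes the origin for all $t$ because both flows do (Remark \ref{H fixes origin}). The natural candidate is to let $\psi$ act on the extended phase space by $\psi(r,\theta,z) = (r, \theta, g^\theta(z))$, i.e. fiberwise over the $(r,\theta)$-cylinder apply the diffeomorphism $g^\theta$. This clearly fixes $r$, hence $(\psi^{-1})^* f(r) = f(r)$ is immediate, so the real content is the symplectic intertwining $(\psi^{-1})^*\omega_K = \omega_H$.

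First I would set up notation: write $\omega_H = dr\wedge d\theta - dH\wedge d\theta + \sigma$ and similarly for $\omega_K$, where $\sigma$ is the standard form on $B^{2n}$, pulled back to $W$. The key observation is that both $\omega_H$ and $\omega_K$ are obtained from the ``trivial'' form $dr\wedge d\theta + \sigma$ by the fiberwise symplectomorphisms given by the flows of $H$ and $K$ respectively: concretely, the map $(r,\theta,z)\mapsto (r,\theta, (\phi^\theta_H)(z))$ (or its inverse, depending on sign conventions) pulls $dr\wedge d\theta + \sigma$ back to $\omega_H$, because $\phi^\theta_H$ is a symplectomorphism of $(B^{2n},\sigma)$ for each $\theta$ and the extra $-dH\wedge d\theta$ term is exactly the correction coming from differentiating the flow in the $\theta$-direction — this is the standard ``mapping torus / suspension'' computation, and I would carry it out in local coordinates by computing $d$ of the pullback of $\lambda_0$ and collecting the $d\theta$ terms. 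Granting this, define $\Psi_H(r,\theta,z) = (r,\theta,\phi^\theta_H(z))$ and $\Psi_K(r,\theta,z)=(r,\theta,\phi^\theta_K(z))$; both pull the trivial form back to $\omega_H$, $\omega_K$ respectively. Then $\psi := \Psi_K \circ \Psi_H^{-1}$ is a diffeomorphism of $W$ fixing $r$, with $\psi(r,\theta,z) = (r,\theta, \phi^\theta_K\circ(\phi^\theta_H)^{-1}(z)) = (r,\theta,g^\theta(z))$, and
\[
  \psi^*\omega_K = (\Psi_H^{-1})^*\Psi_K^*\omega_K = (\Psi_H^{-1})^*(dr\wedge d\theta+\sigma) \quad\text{would need}\quad (\Psi_H^{-1})^*(dr\wedge d\theta+\sigma)=\omega_H;
\]
since $\Psi_H^*\omega_H^{\mathrm{triv}}$-type identities run in the direction $\Psi_H^*(dr\wedge d\theta+\sigma)=\omega_H$, one gets $\psi^*\omega_K=\omega_H$, equivalently $(\psi^{-1})^*\omega_K=\omega_H$ after taking inverses, which is the claimed identity (up to matching the paper's orientation conventions, which I would fix at the start). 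I should also check $\psi$ genuinely maps $W$ to $W$ — here I would note that $g^\theta$ fixes the origin and is $C^1$-close to the identity uniformly in $\theta\in S^1$ (compactness of $S^1$), so after possibly shrinking the radius of $B^{2n}$ it preserves the ball, and that $\psi$ extends smoothly in $\theta$ because $g^0=g^1=\mathrm{id}$ makes the assignment $\theta\mapsto g^\theta$ genuinely $S^1$-periodic.

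The main obstacle I anticipate is purely bookkeeping rather than conceptual: getting the suspension identity $\Psi_H^*(dr\wedge d\theta+\sigma)=\omega_H$ right with the correct signs, given that the paper has just redefined $H$ as $-H$ and uses $\omega_H = d(r-H)\wedge d\theta+\sigma$. One has to be careful that the Hamiltonian vector field convention $\iota_{X_H}\sigma = -dH$ (stated in Section \ref{local model:HM}) threads through correctly so that the $d\theta$-component produced by pulling back along the flow is exactly $-dH\wedge d\theta$ and not $+dH\wedge d\theta$. A clean way to organize this is to verify the infinitesimal statement: if $\Psi^t(r,\theta,z)=(r,\theta,\phi^\theta_H(z))$ restricted to the slice at angle $\theta$, compute $\frac{\partial}{\partial\theta}$ of $(\phi^\theta_H)^*(\text{anything})$ via Cartan's formula using that $\frac{d}{d\theta}\phi^\theta_H = X_H\circ\phi^\theta_H$. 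Once the sign is pinned down on the trivial model, the composition argument above is formal. Finally, $(\psi^{-1})^*f(r)=f(r)$ requires no separate argument beyond ``$\psi$ fixes the $r$-coordinate and $f$ depends only on $r$,'' which I would state in one line.
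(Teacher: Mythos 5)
Your proposal is correct and, at bottom, rests on the same computation as the paper's proof, but it is organized differently in a way worth comparing. The paper works with the single loop $\psi^\theta=\phi^\theta_F$, $F=H\#(K^{-1})$, computes $(\psi^{-1})^*\sigma=\sigma-dF\wedge d\theta$ by the kernel argument, and then uses the composition formula $F=H-K\circ\psi^{-1}$ to convert $dF$ into $dH$ and $dK$; you instead trivialize each form separately, showing that the suspension maps $\Psi_H,\Psi_K$ carry $\omega_H,\omega_K$ to the product form $dr\wedge d\theta+\sigma$ and then composing, which buys you a cleaner, more conceptual argument that never needs the $\#$-formula, at the cost of having to prove the trivialization identity (which is the same Cartan-type computation the paper does for the loop). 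Your final map $\psi=\Psi_K\circ\Psi_H^{-1}$, i.e.\ $(r,\theta,z)\mapsto(r,\theta,\phi^\theta_K\circ(\phi^\theta_H)^{-1}(z))$, is exactly the inverse of the paper's $\psi$, and since the conclusion $\psi^*\omega_K=\omega_H$ is equivalent to the lemma's statement after relabeling $\psi\leftrightarrow\psi^{-1}$, this is harmless. Two small points to pin down when you write it out. First, the direction of the suspension identity: with the paper's conventions ($\iota_{X_H}\sigma=-dH$, $\omega_H=d(r-H)\wedge d\theta+\sigma$) one has $\Psi_H^*\omega_H=dr\wedge d\theta+\sigma$, equivalently it is the map $(r,\theta,z)\mapsto\bigl(r,\theta,(\phi^\theta_H)^{-1}(z)\bigr)$ that pulls the product form back to $\omega_H$; your prose states the opposite direction (hedged), but your displayed chain $\psi^*\omega_K=(\Psi_H^{-1})^*\Psi_K^*\omega_K=(\Psi_H^{-1})^*(dr\wedge d\theta+\sigma)=\omega_H$ in fact uses the correct direction consistently, so nothing breaks. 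Second, $\Psi_H$ and $\Psi_K$ individually do not descend to $W$, since $\phi^\theta_H$ is not $1$-periodic in $\theta$; they live on the cut-open model $(1-\delta,1+\delta)\times[0,1]\times B^{2n}$, and only the composition $\psi$ descends (as you note, because $g^0=g^1=\mathrm{id}$). Since the identity $\psi^*\omega_K=\omega_H$ is pointwise, verifying it upstairs and descending is fine, but you should say this explicitly rather than treating $\Psi_H,\Psi_K$ as maps of $W$.
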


\begin{proof}

First, a word on the strategy of this proof. We use the fact that $\phi_H^1$ and $\phi_K^1$ differ by a loop. We will take $\psi^\theta=\phi_F^\theta$ where $F$ is the time dependent Hamiltonian $H\# (K^{-1})_t$. That is to say $\psi^1= \phi_H^1\circ(\phi_K^1)^{-1}$. As we are interested in the time-one map we will call $\psi^1$ simply $\psi$ to clean up our notation. Now we would like to say that $(\psi^{-1})^*(\omega_K) = \omega_H$ where $\psi^{-1}$ is the inverse of $\psi$, but $\psi$ is a symplectomorphism of ($B^{2n},\sigma$), so we need to extend it to the local model,  which we do in the following way:
\[
	(r,\theta, z) \mapsto (r,\theta,\psi^\theta(z)).
\] 
With only minor abuse of notation we will refer to both $\psi:(1-\delta,1+\delta)\times S^1\times B^{2n}\to (1-\delta,1+\delta)\times S^1\times B^{2n}$ and $\psi^\theta:B^{2n}\to B^{2n}$ as $\psi$ interchangeably. Which map we are referring to should be clear from context. The symplectomorphism here preserves $r$ and $\theta$ coordinates and so it is clear that $(\psi^{-1})^*(f(r)) =f(r)$. It only remains to show that $(\psi^{-1})^*(\omega_K)=\omega_H$.\\*

Since $(\psi^{-1})^*$ is linear on differential forms we compute $(\psi^{-1})^*(\sigma)$ and $(\psi^{-1})^*(d(r-K)\wedge d\theta)$ separately.  We will first look at $(\psi^{-1})^*(\sigma)$. We know that this is entirely determined by restricting $\psi^{-1}$ to the $S^1\times B^{2n}$ coordinates as $\psi$ and $\psi^{-1}$ only depend on $\theta$ and $z$ (they leave $r$ fixed and none of the component functions depend on $r$). In this sense, we may look at $\psi :S^1\times B^{2n} \to S^1\times B^{2n}$ in order to show that $(\psi^{-1})^*(\sigma)= \sigma - dF\wedge d\theta$ (this reduces to an exercise in linear algebra). After which, we look at the expanded form of $dF$ by  using the composition formula for $F=H\# (K^{-1})_t$. From this we show $(\psi^{-1})^*(-dK \wedge d\theta) =(dF-dH)\wedge d\theta$. Finally, putting these two things together we can show $(\psi^{-1})^*\omega_K =\omega_H$. \\*

Now we show $(\psi^{-1})^*\sigma= \sigma - dF\wedge d\theta$. If we know a differential form along a submanifold as well as its behavior on the normal direction to that submanifold, the differential form is determined in a neighborhood of that submanifold. If we do this along a family of submanifolds that foliate $S^1\times B^{2n}$ we will determine the differential form on the whole space. The foliating family of submanifolds we looks at are the $\{\theta\}\times B^{2n}$ slices. We observe that if we take $v, w\in T_{(\theta, z)}B^{2n}$, where $T_{(\theta, z)}B^{2n}$ is the tangent space to the submanifold $\{\theta\}\times B^{2n}$ at the point $(\theta, z)$, then it is elementary to verify that $(\psi^{-1})^*\sigma_{(\theta, z)}(v, w)=((\psi^\theta)^{-1})^* \sigma_{z}(v, w) = \sigma_{z}(v, w)= \sigma_{(\theta, z)}(v, w)$. This is because $\sigma$ doesn't depend on the $\theta$ coordinate and for each $\theta$ we know that $\psi^{\theta}$ and its inverse are symplectomorphisms of $B^{2n}$ with the symplectic form $\sigma$. Next we describe what happens in the normal direction. Since $\sigma$ is a maximally nondegenerate two-form on an odd dimensional space we know it has a kernel. The kernel of $\sigma$ at any point is $\frac {\partial}{\partial \theta}$; we use this to find the kernel of $(\psi^{-1})^*\sigma$. We can compute that $\psi_*(\frac{d}{d\theta}) = \frac{d}{d\theta} +\frac{\partial}{\partial\theta}\psi^\theta= \frac{d}{d\theta}+ X_{F}$, from which it follows that $(\psi^{-1})_*(\frac{d}{d\theta}+ X_{F})=\frac{d}{d\theta}$. This is because the push forward of a diffeomorphism is the inverse of the push forward of the inverse of the diffeomorphism. Now from this we know that the span of $\frac{\partial}{\partial\theta} +X_F$ is the kernel of $(\psi^{-1})^*\sigma$. So with this we can deduce  

\begin{align}\label{l2eq1}
	(\psi^{-1})^*(\sigma) = \sigma - dF\wedge d\theta 
\end{align} and one can easily check that $\frac{\partial}{\partial \theta}+X_F$ is the kernel of this two form and that it has the same value on $v, w \in T_{(\theta, z)}B^{2n}$ as $\sigma$ does on those vectors. This is because $v \in T_{(\theta, z)}B^{2n}$ is in the kernel of $d\theta$. This proves the first half of our claim.\\*

Next we look at the formula for $H\# (K^{-1})$. Here we have that $(H\# (K^{-1}))_t=H_t+ (K^{-1})_t\circ(\phi_H^t)^{-1} =H_t- K_t\circ\phi_K^t\circ(\phi_H^t)^{-1}$. So at $t=1$ we have $F = H-K\circ(\psi^{-1})$. Thus $dF = dH-(\psi^{-1})^*(dK)$, and from this we can see $(\psi^{-1})^*(-dK) =dF-dH$. Hence it follows that 
 \begin{align}\label{l2eq2}
	(\psi^{-1})^*(-dK \wedge d\theta) =(dF-dH)\wedge d\theta
\end{align} which shows the second half of our claim.\\*

Finally we can compute $(\psi^{-1})^*\omega_K$. Here we have 
\[
	(\psi^{-1})^*{\omega_K} =(\psi^{-1})^*(dr\wedge d\theta -dK\wedge d\theta +\sigma) = dr\wedge d\theta +(\psi^{-1})^*(-dK\wedge d\theta) + (\psi^{-1})^*\sigma
\] 
Now applying equation \eqref{l2eq1} to  $(\psi^{-1})^*\sigma$, and equation \eqref{l2eq2}  to $(\psi^{-1})^*(-dK \wedge d\theta)$ we get
\[
	(\psi^{-1})^*{\omega_K} = dr\wedge d\theta +(dF-dH)\wedge d\theta +  \sigma - dF \wedge d\theta= d(r-H)\wedge d\theta +\sigma  = \omega_H.
\] 
This completes the proof.\\*

\end{proof}

\begin{Corollary}
\label{l2c}
Under the conditions described in Lemma \ref{thm:lemma2} we have
\[
 HF^{loc}_*(\phi^1_{H})\cong HF^{loc}_{*+s}(\phi^1_{K}) \qquad \text{and} \qquad HF^{loc}_*(\gamma, \omega_{H},f) \cong HF^{loc}_{*+s}(\gamma, \omega_{K},f),
\]
where $s$ is two times the Maslov index of the loop $\phi^t_F$ from Lemma \ref{thm:lemma2}, as well as the corresponding statements for $S^1$ equivariant Floer Homology.
\end{Corollary}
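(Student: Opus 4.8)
The plan is to derive the first isomorphism from property (LFH2) and the second from the symplectomorphism built in Lemma \ref{thm:lemma2}, and then to observe that the degree shift produced by the two constructions is one and the same integer $s$, namely $s=2\mu$ where $\mu$ is the Maslov index of the linearized loop $\theta\mapsto d_0\phi^\theta_F\in Sp(2n)$ (the sign being a matter of the Maslov convention).

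For $HF^{loc}_*(\phi^1_H)\cong HF^{loc}_{*+s}(\phi^1_K)$: since $\phi^1_H=\phi^1_K$, the isotopy $\psi^\theta=\phi^\theta_F$ with $F=H\#(K^{-1})_t$ is a loop of local Hamiltonian diffeomorphisms of $B^{2n}$ based at the identity and fixing the origin, since $\phi^\theta_H$ and $\phi^\theta_K$ both do (Remark \ref{H fixes origin}). By the composition formula recalled in the proof of Lemma \ref{thm:lemma2}, the Hamiltonian $F\#K$ generates precisely the isotopy $\phi^\theta_H$, so $HF^{loc}_*(0,\sigma,F\#K)=HF^{loc}_*(\phi^1_H)$ by definition. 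Applying (LFH2) to this loop, and using that local Floer homology is independent of the almost complex structure so that the twist $\psi^\theta_*J_\theta$ is harmless, yields $HF^{loc}_*(\phi^1_H)\cong HF^{loc}_{*+s}(\phi^1_K)$ with $s$ equal to twice the Maslov index of $\phi^\theta_F$, as claimed.

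For $HF^{loc}_*(\gamma,\omega_H,f)\cong HF^{loc}_{*+s}(\gamma,\omega_K,f)$: Lemma \ref{thm:lemma2} provides the symplectomorphism $\psi\colon W\to W$, $(r,\theta,z)\mapsto(r,\theta,\phi^\theta_F(z))$, with $(\psi^{-1})^*\omega_K=\omega_H$ and $(\psi^{-1})^*f=f$, and it preserves $\gamma=1\times S^1\times 0$ because the flow of $F$ fixes the origin. Hence $\psi$ intertwines $X_{\omega_H,f}$ and $X_{\omega_K,f}$, conjugates $\phi^t_{\omega_H,f}$ to $\phi^t_{\omega_K,f}$, and, after transporting an auxiliary almost complex structure, induces a chain isomorphism between the local Floer complexes of $(\gamma,\omega_H,f)$ and $(\gamma,\omega_K,f)$. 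The only delicate point is the grading, which is read off from the canonical frame coming from the product structure $S^1\times B^{2n}$: this frame is not preserved by $\psi$, and pulling the $\omega_K$-frame back along $\gamma$ by $\psi^{-1}$ twists it by the loop $\theta\mapsto d_0\phi^\theta_F$, while the complementary $(r,\theta)$-block of $d\psi$ along $\gamma$ is unipotent upper triangular, hence a contractible loop contributing nothing. Concretely, in both models the linearized time-$t$ flow along $\gamma$ splits as a fixed unipotent $(r,\theta)$-block, independent of the choice between $H$ and $K$, together with $d_0\phi^t_H$, respectively $d_0\phi^t_K$, in the $z$-directions; these two paths of symplectic matrices share an endpoint and differ by the loop $d_0\phi^t_F$, so their Conley--Zehnder indices differ by the same $2\mu$. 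The $S^1$-equivariant statements then follow verbatim, using the equivariant analogue of (LFH2), constructed exactly as in the nonequivariant case from the continuation machinery behind (LEFH1), together with the fact that $\psi$, being assembled from the flow of $F$, extends to $S^1\times U\times S^{2k+1}$ commuting with the auxiliary Hopf action.

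The step I expect to be the main obstacle is precisely the grading bookkeeping in the previous paragraph: checking, with correct signs, that the shift extracted from (LFH2) on $B^{2n}$ and the shift extracted from the frame twist along $\gamma\subset W$ are literally the same integer $s$. Everything else is either a direct invocation of the properties already established or a routine naturality check.
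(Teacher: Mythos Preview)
Your proposal is correct and follows essentially the same route as the paper: both invoke (LFH2) for the first isomorphism, use the symplectomorphism $\psi$ from Lemma~\ref{thm:lemma2} for the second, and identify the common degree shift by analyzing how $\psi$ twists the product trivialization along $\gamma$, noting that the $(r,\theta)$-block is unipotent and hence contributes nothing while the $z$-block contributes the Maslov index of $d_0\phi^\theta_F$. Your phrasing of the grading step via the linearized flows along $\gamma$ is a slight variant of the paper's frame-pullback computation, but the content is the same.
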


\begin{proof}
For the first isomorphism we can apply (LFH2) directly because $\phi^1_H=\phi^1_K$ implies that the Hamiltonian isotopies differ by a loop. The second isomorphism follow from the fact we have shown $W,\omega_H, f$ is symplectomorphic to $W, \omega_K, f$ and both Floer homology groups are obviously symplectomorphism invariants. Of course this is only up to a canonical isomorphism which may involve a shit in degree.\\

Now a few words about why the shift of degree is the same in both isomorphisms. For the first isomorphism we know that $\phi^t_H=\phi_F^t\circ\phi^t_K$, and from (LFH2) we have that $HF^{loc}_*(\phi_F^t\circ\phi^t_K)\cong HF^{loc}_{*-2\mu}(\phi^t_K)$ where $\mu$ is the Maslov index of the loop $\phi^t_F$. Now on the symplectic homology side we need to focus on the trivialization, which we will call $\xi$. We have shown that $(\psi^{-1})^*\omega_K=\omega_H$ and $(\psi^{-1})^*f=f$ but it is not necessarily the case that $(\psi^{-1})^*\xi =\xi$. Thus what immediately follows from the above Lemma is that
\[
	 HF^{loc}_{*}(\gamma, \omega_{K},f, \xi) \cong  HF^{loc}_*(\gamma, \omega_{H},f, (\psi^{-1})^*\xi) 
\] where the grading on the left is computed using $\xi$ and on the right using $(\psi^{-1})^*\xi$. So what remains is to compare $HF^{loc}_*(\gamma, \omega_{H},f, (\psi^{-1})^*\xi)$ and $HF^{loc}_*(\gamma, \omega_{H},f, \xi) $. This change in trivialization will produce a shift in degree equal to twice the Maslov index of the frame $(\psi^{-1})^*\xi$ relative to the frame $\xi$. We can pick the basis $\{\frac{\partial}{\partial r},\frac{\partial}{\partial \theta}, \frac{\partial}{\partial x_1},....,\frac{\partial}{\partial x_{2n}} \}$. In this basis we may express $(\psi^{-1})^*\xi$ as the matrix with columns $\{\frac{\partial}{\partial r},\frac{\partial}{\partial \theta} +X_F, d\phi^\theta_F \}$. From this together with homotopy invariance of the Maslov index we can see that the Maslov index here corresponds to that of the matrix $d\phi^\theta_F$, that is the Maslov index of $\phi_F^t$. Thus we have that  
\[
	HF^{loc}_*(\gamma, \omega_{H},f, (\psi^{-1})^*\xi) \cong HF^{loc}_{*+2\mu}(\gamma, \omega_{H},f,\xi). 
\] From this it follows that $ HF^{loc}_{*}(\gamma, \omega_{H},f, \xi) \cong  HF^{loc}_{*-2\mu}(\gamma, \omega_{K},f, \xi) $ which shows that we have the same shift in degree for both the local Hamiltonian Floer homology side of the isomorphism as well as the symplectic homology side.

\end{proof}

With these two tools in hand we now begin the proof in earnest.\\*

\section{A setting where the K{\"u}nneth formula may be applied}
\label{sec:prep for Kunneth}

In this section we go through the remaining steps required in order to arrive at a model that has homology isomorphic to the original while also satisfying the hypothesis needed to apply the K{\"u}nneth formula.The steps outlined here are necessary for the proof of both theorems and the steps apply to both the equivariant and non-equivariant settings. The first proposition in the section will allow us to replace our original $H$ by one which autonomous. The second proposition will then use a change of variable, as well as a final change to our Hamiltonian, finally putting us in the a situation where we will be able to apply the K{\"u}nneth formula for Hamiltonian Floer Homology.\\

\begin{Proposition}
\label{autonomous H}
Consider the local model  $W$ with the symplectic for $\omega_H$ and the Hamiltonian $f$ as described in Section \ref{new model}. Through a series of replacements of the types described in Lemma \ref{thm:lemma1} and Lemma \ref{thm:lemma2} we may replace our original Hamiltonian $H$ with a new Hamiltonian $\hat H$ such that
\[
HF_*^{loc}(\phi^1_H) \cong HF_{*+s}^{loc}(\phi^1_{\hat H}) \qquad \text{and} \qquad HF_*^{loc}(\gamma,\omega_H,f)\cong HF_{*+s}^{loc}(\gamma,\omega_{\hat H},f)
\] where $\omega_{\hat H} = d(r-\hat H)\wedge d\theta+\sigma$. Additionally the corresponding statement for equivariant Floer homology is true. 

Finally the new Hamiltonian $\hat H$ enjoys the following properties:

\begin{itemize}
\item{($\hat H$1)} $\hat H$ does not depend on $\theta$ it is an autonomous Hamiltonian. 
\item{($\hat H$2)} $\phi^t_{\hat H}$ has $0$ as an isolated fixed point of $B^{2n}$ for $t\in (0,2)$
\item{($\hat H$3)} $\hat H(0) =0$

\end{itemize}

\end{Proposition}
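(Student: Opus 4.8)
\emph{Strategy.} The plan is to reach $\hat H$ through a finite chain of replacements, each of the type allowed by Lemma~\ref{thm:lemma1} or Lemma~\ref{thm:lemma2}. By Corollaries~\ref{l1c} and~\ref{l2c} every link produces an isomorphism of $HF^{loc}_*(\phi^1_{(\cdot)})$ \emph{and} a matching isomorphism of $HF^{loc}_*(\gamma,\omega_{(\cdot)},f)$, with the same degree shift, and likewise in the equivariant theory; the accumulated shifts are the $s$ of the statement. Property~($\hat H$3) is free and imposed last, since adding a constant to a Hamiltonian changes neither its flow nor the Conley--Zehnder grading of a one-orbit complex. So the real task is to deform the germ at the origin of the return map $\phi=\phi^1_H$ — through germs fixing $0$ and keeping $0$ \emph{uniformly isolated}, so that Lemma~\ref{thm:lemma1} applies, with occasional Lemma~\ref{thm:lemma2} reparametrizations — into the time-one map of an autonomous Hamiltonian whose flow has $0$ isolated for every $t\in(0,2)$. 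As a harmless first step reparametrize $\phi^\theta_H$ to be locally constant near $\theta\in\mathbb Z$; this is a Lemma~\ref{thm:lemma2} move along a contractible loop (no shift) and makes ``the germ $\phi$'' well posed.

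\emph{Splitting off the nondegenerate part.} Let $A=d\phi_0$ and write $\mathbb R^{2n}=V_1\oplus V'$ for the symplectic splitting into the generalized $1$-eigenspace of $A$ and the sum of the remaining generalized eigenspaces. Using a symplectic straightening of the mixed terms together with the rescaling deformation $z\mapsto s^{-1}\phi(sz)$ in the $V'$-directions, deform $\phi$ — a Lemma~\ref{thm:lemma1} replacement — to a product germ $\phi'\oplus\phi''$ with $\phi''$ on $V'$ having no eigenvalue $1$ (so $0$ is a nondegenerate, hence isolated, fixed point) and $\phi'$ on $V_1$ having unipotent linearization. A fixed point of a product being a pair of fixed points, uniform isolatedness is preserved, and it suffices to autonomize the two factors separately: the direct sum of autonomous Hamiltonians is autonomous and $\phi^t_{\hat H_1\oplus\hat H_2}=\phi^t_{\hat H_1}\oplus\phi^t_{\hat H_2}$ has $0$ isolated for $t\in(0,2)$ as soon as both factors do.

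\emph{The two factors.} For $\phi''$ the rescaling deformation runs for all $s\in[0,1]$, so replace $\phi''$ by its linear part and then, staying inside the open set of linear symplectic maps of $V'$ with no eigenvalue $1$, deform to $e^{JS''}$ with $S''$ symmetric and every purely imaginary eigenvalue of $JS''$ of modulus $\le\pi$; the quadratic $\hat H''=\tfrac12(z'')^{\top}S''z''$ then has $0$ isolated for $t\in(0,2)$. (That such a normal form lies in every component of this open set is a finite computation with real symplectic logarithms; the one obstruction, a negative real eigenvalue with an odd number of equal Jordan blocks, is removed by a preliminary deformation.) For $\phi'$, first deform the unipotent linearization to the identity while correcting the nonlinear terms so $0$ stays uniformly isolated; then $\phi'$ is $C^1$-small near $0$, its graph is Lagrangian over the diagonal, and the generating function $\mathcal F$ satisfies $d\mathcal F_0=0$ with an isolated critical point at $0$. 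Take $\hat H':=\varepsilon\mathcal F$ with $\varepsilon>0$ small (and a cutoff making it defined on the ball), deform $\phi'$ to $\phi^1_{\hat H'}$ by a Lemma~\ref{thm:lemma1} move, and shrink $B^{2n}$ so that for all $t\in(0,2)$ the only fixed point of $\phi^t_{\hat H'}$ in it is $0$. Then $\hat H:=\hat H'\oplus\hat H''$, normalized so $\hat H(0)=0$, does the job.

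\emph{Main obstacle.} I expect the totally degenerate factor $\phi'$ to be the crux. Deforming a unipotent linearization to the identity without the fixed point ever ceasing to be isolated requires genuinely tracking the nonlinear part; and, once $\phi'$ is replaced by the generating-Hamiltonian flow $\phi^t_{\mathcal F}$ — with $\mathcal F$ possibly very degenerate at $0$ — one must choose $\varepsilon$ and shrink the ball so that $0$ is isolated for the \emph{whole} interval $t\in(0,2)$, not merely for very small $t$; this is the point at which one estimates the periods of the short periodic orbits of $X_{\mathcal F}$ near $0$, which blow up as the orbits shrink because $\nabla\mathcal F=O(|z|^2)$. Everything else is bookkeeping: realizing each germ-level deformation by an honest family of one-periodic Hamiltonians fixing the origin, and checking the hypotheses of Lemmas~\ref{thm:lemma1} and~\ref{thm:lemma2} at each step.
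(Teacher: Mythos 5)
Your overall architecture parallels the paper's (split off the factor where $d\phi_0$ has no eigenvalue $1$, reduce it to a quadratic Hamiltonian whose elliptic blocks are rotations by angles in $[-\pi,\pi]$, treat the totally degenerate factor through its generating function, and do the bookkeeping with Lemmas \ref{thm:lemma1} and \ref{thm:lemma2}), but the totally degenerate factor contains a genuine gap, and it sits exactly where the paper spends most of its effort. You write ``deform $\phi'$ to $\phi^1_{\hat H'}$ by a Lemma \ref{thm:lemma1} move,'' with $\hat H'=\varepsilon\mathcal F$, but Lemma \ref{thm:lemma1} requires an explicit family of one-periodic Hamiltonians whose time-one maps interpolate between $\phi'$ and $\phi^1_{\varepsilon\mathcal F}$ with $0$ uniformly isolated throughout, and no such family is produced. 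The natural interpolation through the generating functions $s\mathcal F$, $s\in[\varepsilon,1]$, does keep $0$ uniformly isolated, but it ends at the map \emph{generated by} $\varepsilon\mathcal F$, not at the Hamiltonian time-one flow of $\varepsilon\mathcal F$; bridging that discrepancy without creating fixed points is not soft. In the paper this is precisely the role of Lemma 6.1 of \cite{Gi:CC}, which produces a one-periodic $K$ with $\phi^1_K=\phi^1_{\tilde H}$, autonomous for short time, together with the relative estimates (K.5)--(K.7) comparing $X_K$ with the generating-function vector field, and of the Hofer--Zehnder-type ``nearly autonomous'' argument (after conjugating so that $\|d(\phi^1_{\tilde H})_0-\mathrm{id}\|$ is small, Lemma 5.1 of \cite{Gi:CC}), which shows $0$ is the only fixed point of $\phi^t_K$ for all $t\in(0,1]$ and thereby legitimizes the slow-down family $K^s$. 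The Yorke-type period blow-up you invoke only yields property ($\hat H$2) for the final autonomous Hamiltonian; it does not supply the homology-preserving deformation from $\phi'$ to that Hamiltonian's flow.

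A second unsupported step is your preliminary claim that the unipotent linearization can be deformed to the identity ``while correcting the nonlinear terms so $0$ stays uniformly isolated.'' The paper deliberately avoids this: it never makes $d\phi_0$ equal to the identity, only conjugates it arbitrarily close to the identity, which is all the estimates need; as stated your step is an assertion without a mechanism, and it is unnecessary if you follow the conjugation route. Two smaller points: the splitting deformation you sketch (rescaling only the $V'$-directions plus ``straightening of mixed terms'') is not a symplectic conjugation when mixed terms are present, so an honest splitting still needs something like the paper's generating-function argument (implicit function theorem plus parametric Morse lemma); and every germ-level deformation must be re-realized by one-periodic Hamiltonians fixing the origin (the $\lambda(\theta)$ reparametrization trick in the paper) before Lemma \ref{thm:lemma1} or \ref{thm:lemma2} can be quoted --- you call this bookkeeping, and it is, but in the degenerate factor it is intertwined with the quantitative estimates above rather than separable from them.
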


\begin{proof}
The proof of this proposition is somewhat long and technical. So as not to distract from the flow of the argument the proof is left for Section \ref{sec:propproof}.
\end{proof}

Now we make the final adjustments to our model.\\

As it currently stands the contribution of $\hat H$ to the flow comes from inside of the symplectic form $\omega_{\hat H}$. Our current Hamiltonian vector field at this point of the argument is that of $f(r)$ with respect to $\omega_{\hat H}$ which is $X_{\omega_{\hat H},f} = f'(r) (\frac{d}{d\theta} +X_{\hat H})$. Since our aim is to apply the K{\"u}nneth formula, we want to modify our system in such a way that the $X_{\hat H}$ term isn't being scaled by $f'(r)$. Additionally, we want to ensure we are not changing the relevant Floer homology groups up to isomorphism. In order to do this we will extract the Hamiltonian $\hat H$ from our symplectic form by using a change of coordinates. This is the content of our next proposition.\\

\begin{Proposition}
\label{prop:cov}
Consider the local model  $W$ with the symplectic form $\omega_{\hat H}$ from Proposition \ref{autonomous H} and the Hamiltonian $f(r)= \epsilon + (1-2\epsilon)r+\epsilon r^2$. After applying the change of variable $\rho = r-\hat H$ we have the following isomorphism in local Floer homology:
\[
	HF_*^{loc}(\gamma, \omega_{\hat H},f) \cong HF^{loc}_*(\gamma, d\rho\wedge d\theta + \sigma, f(\rho)+\hat H(z)),
\] where $f$ only depends on $\rho$ and $\hat H$ only depends on $z$. Additionally, the corresponding statement for equivariant Floer Homology is true. 
\end{Proposition}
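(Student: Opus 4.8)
The plan is to show that the change of variable $\rho = r - \hat H(z)$ is a diffeomorphism of a neighborhood of $\gamma$ in $W$ which pulls the symplectic form $\omega_{\hat H}$ back (or pushes it forward) to the standard split form $d\rho\wedge d\theta + \sigma$, and which carries the Hamiltonian $f(r)$ to $f(\rho) + (\text{correction})$; I will then check that the correction term can be traded for $\hat H(z)$ up to an isomorphism of local Floer homology via (LFH1)/(LEFH1). First I would set up the map $\Psi\colon (r,\theta,z)\mapsto (\rho,\theta,z) = (r-\hat H(z),\theta,z)$. Since $\hat H$ is autonomous by ($\hat H$1) and vanishes along $\gamma=1\times S^1\times 0$ together with its differential (the origin is a fixed point, so $d\hat H(0)=0$ — this is where $(\hat H 3)$ and the fact that $0$ is a fixed point enter), the map $\Psi$ is a well-defined diffeomorphism on a possibly smaller neighborhood $W$ of $\gamma$, fixes $\theta$ and $z$, and sends $\gamma$ to $\gamma$. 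Then a direct computation gives $\Psi^* (d\rho\wedge d\theta) = d(r-\hat H)\wedge d\theta$, so that $\Psi^*(d\rho\wedge d\theta + \sigma) = d(r-\hat H)\wedge d\theta + \sigma = \omega_{\hat H}$, since $\sigma$ does not involve $r$ or $\theta$ and is fixed by $\Psi$. Hence $\Psi$ is a symplectomorphism from $(W,\omega_{\hat H})$ to $(W', d\rho\wedge d\theta+\sigma)$, and local Floer homology is a symplectomorphism invariant, so $HF^{loc}_*(\gamma,\omega_{\hat H},f)\cong HF^{loc}_*(\gamma, d\rho\wedge d\theta+\sigma, f\circ\Psi^{-1})$, with the analogous statement in the equivariant case (the map $\Psi$ is $S^1$-equivariant in the auxiliary sphere coordinates since it only touches $r$).

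Next I would identify $f\circ\Psi^{-1}$. Since $\Psi^{-1}(\rho,\theta,z) = (\rho + \hat H(z),\theta,z)$ and $f$ depends only on its first argument, $f\circ\Psi^{-1} = f(\rho + \hat H(z))$. With the specific choice $f(r) = \epsilon + (1-2\epsilon)r + \epsilon r^2$ this expands to $f(\rho) + (1-2\epsilon)\hat H(z) + 2\epsilon\rho\hat H(z) + \epsilon \hat H(z)^2$, i.e. $f(\rho) + \hat H(z) + \epsilon\big(2\rho\hat H(z) - 2\hat H(z) + \hat H(z)^2\big)$. So up to the error term $E(\rho,z) := \epsilon(2\rho\hat H(z) - 2\hat H(z) + \hat H(z)^2)$, which vanishes to appropriate order along $\gamma$ and can be made uniformly small by shrinking $\epsilon$ and the neighborhood, the transported Hamiltonian is exactly $f(\rho) + \hat H(z)$. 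The final step is to run the straight-line homotopy $f(\rho) + \hat H(z) + t\,E(\rho,z)$, $t\in[1,0]$ wait — $t$ from $1$ down to $0$ — in the symplectic form $d\rho\wedge d\theta + \sigma$, and invoke (LFH1) (resp. (LEFH1)) provided $\gamma$ stays a uniformly isolated fixed point set throughout. This uniform isolation is the thing that needs care: I would verify it by computing the Hamiltonian vector field of $f(\rho)+\hat H(z)+tE$, checking that along the $\theta$-direction the dynamics forces $\rho=1$ (using convexity-type control on $\partial_\rho(f(\rho) + tE)$ near $\rho=1$, which holds for $\epsilon$ small since $E$ is $O(\epsilon)$), and that on $\rho=1$ the $z$-dynamics has the origin as its only fixed point (using ($\hat H$2), after noting that the $z$-part of the vector field at $\rho = 1$ is a small time-reparametrized perturbation of $X_{\hat H}$ whose time-one map still fixes only the origin).

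The main obstacle I anticipate is precisely this last uniform-isolation verification: one must control the mixed term $2\epsilon\rho\hat H(z)$, whose Hamiltonian vector field couples the $(\rho,\theta)$ and $z$ directions, and argue that for $\epsilon$ sufficiently small (and $\delta$, the radius of $B^{2n}$ correspondingly shrunk) the only time-one periodic orbits of the homotopy remain on $\gamma$. I would handle this by the same compactness argument used in Lemma \ref{thm:one_periodic_are_reeb} and Proposition \ref{new model}: the relevant scaling functions are smooth in $(t,\rho,\theta,z)$ and equal to their "unperturbed" values along $\gamma$ for all $t\in[0,1]$, so by compactness of $[0,1]$ one finds a single neighborhood on which they stay in the range where ($\hat H$2) applies. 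A secondary bookkeeping point is that there is no degree shift in this proposition — unlike Lemma \ref{thm:lemma2}, the map $\Psi$ is isotopic to the identity through symplectomorphisms fixing $\gamma$ with trivial linearized loop, so the trivialization $\xi$ is preserved up to homotopy and the grading is unchanged; I would remark on this explicitly so the reader sees why the index "$s$" of Proposition \ref{autonomous H} does not reappear here.
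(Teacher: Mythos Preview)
Your proposal is correct and follows essentially the same route as the paper: the same change of variable $\Psi$, the same expansion of $f(\rho+\hat H)$, the same straight-line homotopy in the error term, and the same appeal to (LFH1)/(LEFH1) via a uniform-isolation check using $(\hat H2)$ and $(\hat H3)$. The only substantive difference is that the paper actually integrates the homotoped flow explicitly---the autonomy of $\hat H$ makes the time-one map $(\rho,\theta,z)\mapsto(\rho,\theta+f'+2\epsilon s\hat H,\phi_{\hat H}^{1+2\epsilon s((\rho-1)+\hat H)}(z))$ computable in closed form---so the isolation argument becomes a direct reading-off rather than the compactness/perturbation estimate you sketch; your extra remark on the absence of a degree shift is a welcome addition the paper omits.
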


Before we begin the proof we discuss our choice to take $f(r)= \epsilon + (1-2\epsilon)r+\epsilon r^2$. Up until now we had not made a specific choice for $f$, but as the symplectic homology is independent of $f$, providing that $f(1)=f'(1)=1$ and that $f''>0$, and as such we may choose whatever $f$ is most useful for our purposes. In the proof that follows we will want to expand $f$ in our change of coordinates and this quadratic $f$ will allow us to do this explicitly. Our choice for $f$ here satisfies all the necessary conditions for the definition of symplectic homology. If we additionally require that $2\epsilon\delta<\eta$ we can ensure the condition that $f'(c) \in (1-\eta,1+\eta)$ for $c\in (1-\delta,1+\delta)$, which was required for Lemma \ref{thm:one_periodic_are_reeb}, remains true.\\

\begin{proof}
We begin with the change of variable. First recall that $W= (1-\delta,1+\delta)\times S^1 \times B^{2n}$ with coordinates $(r,\theta, z)$.  As stated above we will take $\rho = r-\hat H$, giving us the change of variable $\Phi: W \to \Phi(W)$ where
\[
	(r,\theta, z)\mapsto (r-\hat H(z),\theta,z).
\] Under this change of coordinates, $d(r-\hat H)$ is $d\rho$, and $r=\rho+\hat H$, from which it follows $f(r) = f(\rho +\hat H)$ and our symplectic form becomes $d\rho\wedge d\theta +\sigma$. We see $\Phi$ is invertible and it clearly has full rank so it is a diffeomorphism onto its image.\\

Next we discuss what the image of the local model  $\Phi(W)$ is under this transformation. First the fixed point set $\gamma=1\times S^1\times0$ maps to itself under $\Phi$ because as mentioned in Proposition \ref{autonomous H} we have $\hat H(0)=0$, and thus on $\gamma$ we have $\rho=1$. As such, $\Phi(\gamma)=\gamma$, which we will need in later in the proof.  Furthermore, by potentially shrinking $\delta$ and the radius of $B^{2n}$ we can think of the original local model  as well as the space after the change of coordinates both containing a shrunken $W$ although the map $\Phi$ may not be bijective on this set. In particular if we let $a = \sup_{z\in B^{2n}}\hat H(z)$ then if $r=1-\delta$ we have $\rho \in ((1-\delta-a,1-\delta+a)$ and if $r=1+\delta$ we have $\rho \in ((1+\delta-a,1+\delta+a)$. This means if $a$ is small enough, which can be achieved by shrinking the radius of $B^{2n}$, we can say for instance $(1-\delta/2, 1+\delta/2)\times S^1\times B^{2n} \subset \Phi(W)$. Which is to say $\Phi$ surjects onto this set. Although this isn't terribly important to the proof that follows, the above set is an open neighborhood of our fixed point set. This open neighborhood lives in both $W$ and $\Phi(W)$, and as such we will choose to call this new set $W$. This allows us to unambiguously refer to $W$ independent of the chosen coordinates as it lies in both spaces. Additionally we will still say $W= (1-\delta,1+\delta)\times S^1\times B^{2n}$ even though we may have had to shrink this set.\\

Now we can rewrite $f(r)$ in our new coordinates
\begin{align*}
	f(\rho+\hat H) = \epsilon + (1-2\epsilon)(\rho +\hat H)+\epsilon(\rho +\hat H)^2\\
	= f(\rho)+(1-2\epsilon)\hat H+2\epsilon\rho \hat H+\epsilon \hat H^2
\end{align*} by substitution. Since we are pulling back our symplectic form and our function by $\Phi$ we of course have
\[
	HF_*^{loc}(\gamma, d(r-\hat H)\wedge d\theta +\sigma, f(r))  \cong HF_*^{loc}(\gamma, d\rho\wedge d\theta + \sigma,  f(\rho)+(1-2\epsilon)\hat H+2\epsilon\rho \hat H+\epsilon \hat H^2)
\]as well as the corresponding statement for equivariant homology. We are now set up to perform the final modification to our Hamiltonian.\\

To do this we look at the family of Hamiltonians
\[
	H^s =  f(\rho)+\hat H+\epsilon s ((2\rho-2) \hat H+\hat H^2). 
\] Here $H^0= f+\hat H$ and $H^1= f(\rho +\hat H)$. Our goal is to show our fixed point set $\gamma=1\times S^1\times 0$ remains uniformly isolated as we vary $s \in [0,1]$ in order to show our homology remains constant.\\

We actually have the ability to integrate our flow for each $s\in [0,1]$ which we may use to show that $\gamma$ remain the only fixed points of our Hamiltonian diffeomorphism for all $s\in [0,1]$. So we look at
\begin{align*}
	dH^s =  f'd\rho+d\hat H+2\epsilon  s(\hat H d\rho +(\rho-1) d\hat H+\hat Hd\hat H)\\
	= (f'+2\epsilon s\hat H)d\rho + (1+2\epsilon s((\rho-1) +\hat H))d\hat H.
\end{align*} Thus we see that the Hamiltonian vector field for $H^s$ with the symplectic form $d\rho\wedge d\theta +\sigma$ is given by 
\[
	X_{H^s}=  (f'+2\epsilon s\hat H)\frac{\partial}{\partial \theta} + (1+2\epsilon s((\rho-1) +\hat H))X_{\hat H}.
\] So the corresponding time-one flow of $\phi_{H^s}$ is the following map
\begin{align}
\label{cov flow H^s}
	(\rho, \theta, z) \mapsto (\rho,\theta + (f'+2\epsilon s\hat H),  \phi_{\hat H}^{(1+2\epsilon s((\rho-1) +\hat H))}(z))
\end{align} One might think that the last coordinate should be $\phi_{\hat H}^{(1+2\epsilon s((\rho-1) +\hat H)+\theta)} \circ (\phi^\theta_{\hat H})^{-1}(z)$ as it has appeared since the introduction of the new model. However, as stated in Proposition \ref{autonomous H}, $\hat H$ is autonomous  we have that 
\[
	 \phi_{\hat H}^{(1+2\epsilon s((\rho-1) +\hat H)+\theta)} \circ (\phi^\theta_{\hat H})^{-1}=  \phi_{\hat H}^{(1+2\epsilon s((\rho-1) +\hat H))} \circ \phi^\theta_{\hat H} \circ (\phi^\theta_{\hat H})^{-1}= \phi_{\hat H}^{(1+2\epsilon s((\rho-1) +\hat H))}. 
\] We need more than just this fact, however, before we show what the fixed points are.\\*

As such we  return our attention to a few properties of $\hat H$ and its associated Hamiltonian flow which were proven in Proposition \ref{autonomous H}. Namely $\hat H(0) = 0$,  and that $\phi^t_{\hat H}$ has $0$ as an isolated fixed point of $B^{2n}$ for $t\in (0,2)$. Now looking back at the equation in line (\ref{cov flow H^s}) with the above facts in hand we can find our fixed point set for any given $s\in [0,1]$. Since $\hat H$ is zero at the origin and $\rho$ is in $(1-\delta,1+\delta)$ we can ensure that $|2\epsilon ((\rho-1) +\hat H))|<1$ simply by shrinking $\delta$ and the radius of the $B^{2n}$ we are looking at. This has no effect whatsoever on our local homology. Doing so ensures that $(1+2\epsilon s((\rho-1) +\hat H)) \in (0, 2)$ for all $s\in [0,1]$. Which means that $z\in B^{2n}$ is only fixed by $\phi_{\hat H}^{ 1+2\epsilon s((\rho-1) +\hat H)}$ if $z$ is the origin. So if we want to look at the fixed points of $\phi_{H^s}$ it suffices to look at the set $(1-\delta,1+\delta)\times S^1\times 0$.\\

At this point we turn our attention to the other term of our flow $\theta +(f'+2\epsilon s\hat H)$ this is only a fixed point if $(f'+2\epsilon s\hat H)$ is an integer. However on the set $(1-\delta,1+\delta)\times S^1\times 0$ we are looking only at $f'$ because $\hat H(0)=0$. But $f'(1)=1$ and $f''(1)>0$, by definition, and thus $f'(\rho)=1$ only at $\rho = 1$ and is close to but not equal to 1 otherwise. Thus the only possible fixed points are those in the set $1 \times S^1 \times 0$. It is easy to verify that in fact every point in $\gamma$ is a fixed point by the logic laid out above. This is true for all $s\in [0,1]$ showing our fixed point set remains uniformly isolated and as such
\[
	 HF^{loc}(\gamma, d\rho\wedge d\theta+\sigma, f(\rho+\hat H)) \cong HF^{loc}(\gamma, d\rho\wedge d\theta+\sigma, f+\hat H)
\] as follows from (LFH1). The corresponding equivariant statement is also true and follows from the above argument together with (LEFH1).

\end{proof}

\section{Proof of the main theorems}
\label{sec:proofmain}

Before we move on to finally proving the our theorems we recall what has been shown up to this point. We have proven that 
\begin{align}
\label{LFHiso}
	SH_*(\gamma) \cong HF^{loc}_{*+s}(\gamma, d\rho\wedge d\theta +\sigma, f+ \hat H).
\end{align}We have also proved the corresponding statement for equivariant symplectic homology
\begin{align}
\label{EFHiso}
	SH_*^{S^1}(\gamma) \cong HF^{S^1}_{*+s}(\gamma, d\rho\wedge d\theta +\sigma, f+ \hat H). 
\end{align}The Hamiltonian $\hat H$ is an autonomous Hamiltonian such that 
\begin{align}
\label{B2niso}
	HF_*^{loc}(\phi^1_{H}) \cong HF^{loc}_{*+s}(\phi_{\hat H}^1)
\end{align} where $H$ is minus the original $H$ given from the model in \cite{HM:contacthom}. These facts together are everything we need to proceed.\\

\subsection{Proof of Theorem \ref{thm:iso1}}
\label{sec:proof1}
\begin{proof}
With our Hamiltonian finally in a suitable form we may at last apply the K{\"u}nneth formula for Hamiltonian Floer homology. After all the modifications, $f$ depends only on $\rho$  and $\hat H$ depends only on $z$, which enables us to apply K{\" u}nneth formula to obtain 
\[
	HF^{loc}_*(\gamma, d\rho \wedge d\theta+\sigma,\hat H+f) \cong \bigoplus_{i+j=*} HF^{loc}_i(0, \sigma,\hat H) \otimes HF^{loc}_j(S^1, d\rho\wedge d\theta,f).
\] 
We don't have any extra terms coming from the K{\"u}nneth spectral sequence which involve the higher Tor groups over the ring $R$ in which we compute the homology. This is because, as will become clear below, the local Floer homology groups of $HF^{loc}_*(S^1, d\rho\wedge d\theta,f)$ are free $R$-modules supported in exactly two degrees and thus all the higher Tor groups will vanish, making the spectral sequence collapse into the isomorphism above.\\

Looking at the left hand side of this tensor we note that $HF^{loc}_{*+s}(0, \sigma,\hat H) = HF^{loc}_{*+s}(\phi^1_{\hat H}) \cong HF^{loc}_{*}(\phi^1_H)$ where $H$ here is the negative of our original Hamiltonian. This was shown through several steps in the proof of Proposition \ref{autonomous H}, and it was done in such a way that we preserved the symplectic homology of $\gamma$ up to isomorphism, as stated above in equation (\ref{B2niso}). On the right hand side of the tensor we know that 
\[
	HF^{loc}_*(S^1, d\rho\wedge d\theta,f)  \cong  HM_{*+1}(S^1,f).
\] This follows from the fact that if $\alpha$ is a symplectic form, $F$ is a Morse-Bott nondegenerate fixed point set of $\phi_f$ a Hamiltonian diffeomorphism for an autonomous Hamiltonian $f$ with respect to $\alpha$, then $HF^{loc}_*(F,\alpha, f) = HM_{*+n}(F,f)$ where the right hand side is the Morse homology of $F$, and $n$ is half the dimension of the symplectic space in which homology is being computed. Additionally the grading on the right comes from the morse index of the isolated critical points one obtains when taking a subtable Morse perturbation of $f$. This is a fact that goes back to \cite{Po} and is well known in the literature.\\*

Combining the above statements with equation (\ref{LFHiso}) we get the following string of isomorphisms:
\begin{align*}
	SH_{*}(\gamma) \cong HF^{loc}_{*+s}(\gamma, d\rho \wedge d\theta+\sigma,\hat H+f)  \cong \bigoplus_{i+j=*+s} HF^{loc}_{i}(\phi^1_{\hat H}) \otimes HM_{j+1}(S^1,f) \\ \cong \bigoplus_{i+j=*} HF^{loc}_{i}(\phi^1_{H}) \otimes HM_{j+1}(S^1,f).
\end{align*}

 Finally we note that $HM_*(S^1,f)$ is only supported in degrees 0 and 1 as these are the Morse index of the critical points of a perturbation of $f$. So we can conclude
\[
	SH_*(\gamma) \cong HF^{loc}_*(\phi^1_H) \oplus  HF^{loc}_{*-1}(\phi^1_H),
\] which proves the claimed statement if we recall that $\phi_H$ is the return map $\phi$. \\*
\end{proof}

\subsection{Proof of Theorem \ref{thm:iso2}}
\label{sec:proof2}

\begin{proof}
Recall as stated in equation (\ref{EFHiso}) we have shown that $SH_*^{S^1}(\gamma) \cong \\ HF^{S^1}_{*+s}(\gamma, d\rho\wedge d\theta +\sigma, f+ \hat H)$. Now when we compute the equivariant symplectic homology we need to take the Hamiltonian $f(\rho)+\hat H$ defined on $(1-\delta,1+\delta)\times S^1\times B^{2n}$ and extend naturally to the space $S^1\times (1-\delta,1+\delta)\times S^1\times B^{2n} \times S^{2k+1}$ with coordinates $(\beta,\rho,\theta,z, \xi)$. That is $f+\hat H(\beta,\rho,\theta,z, \xi) = f(\rho)+\hat H(z)$. Then we need to make a suitable regular perturbation of this sum. In particular, we want to make a regular perturbation of our function of the form $ \tilde f(\beta,\rho,\theta,\xi)+ \tilde G(z)$, where $\tilde G(z)$ is a small nondegenerate perturbation of $\hat H$. Since $\hat H$ here may be degenerate when we take this perturbation our single fixed point may split into several. This is the definition of local Floer homology of $\hat H$ and we have $HF_*^{loc}(\phi^1_{\hat H}) \cong HF_*(\phi^1_{\tilde G})$. On the other hand $\tilde f$ depends on $\beta, \rho, \theta,$ and $\xi$ we know that $HF_*(\tilde f) \cong HF_*^{S^1}(f)$ by the definition of $S^1$-equivariant homology providing providing $2k+1$, the dimension of the sphere, is taken to be large enough that the $S^1$-equivariant Floer homology stabilizes in the specified degree.\\*

The point of making a perturbation of this type is that we can use the K{\"u}nneth formula once again. This is because $\tilde G$ only depends on the $z$ coordinates and $f$ depends on $\beta,\rho,\theta$, and $\xi$, but not on $z$. Applying K{\"u}nneth here we can see that 
\begin{align*}
	SH_{*}^{S^1}(\gamma) \cong HF^{S^1}_{*+s}(\gamma,d\rho\wedge d\theta+\sigma ,f(r)+\hat H(z) )\\
	=  HF_{*+s}(\tilde f(\beta,r,\theta,\xi)+ \tilde G(z) )\\
	\cong \bigoplus_{i+j=*+s} HF_i(\tilde f) \otimes HF_j(\phi^1_{\tilde G}).
\end{align*}
Again as is the case in the previous theorem the higher Tor groups which are used in computing the K{\"u}nneth spectral sequence will all vanish due to the fact that the homology groups of $HF_*(\tilde f)$ are free $R-$modules of finite rank concentrated in only one degree and the spectral sequence will collapse to the isomorphism above.\\

Now from equation (\ref{B2niso}) we know that $HF_{*-s}^{loc}(\phi^1_{ H})\cong HF_*^{loc}(\phi^1_{\hat H}) \cong HF_*(\phi^1_{\tilde G})$, and furthermore as stated above $HF_*(\tilde f) = HF_*^{S^1}(f)$. We can compute $HF_*^{S^1}(f)$ by  applying  Proposition 2.22 in \cite{GG:convex}. This proposition lays out conditions under which we may say that the equivariant Floer homology is isomorphic to the equivariant singular homology. We may apply this proposition because $1 \times S^1$ is a Morse-Bott nondegenerate fixed point set in the $\rho, \theta$ coordinates. From applying the proposition, we know that $HF^{S^1}_*(f)\cong H^{S^1}_*(S^1)$, and we know the $S^1$-equivariant homology of $S^1$ is supported only in degree zero as it can be computed using the Gysin sequence for singular homology. Thus we have $HF^{S^1}_0(f) = R$ where $R$ is the coefficient ring we are computing homology over and it is supported only in degree zero. From these two facts it follows that 
\[
	SH_*^{S^1}(\gamma) \cong \bigoplus_{i+j=*+s} HF^{S^1}_i(f) \otimes HF_j(\phi^1_{\tilde G}) \cong HF_*^{loc}(\phi^1_{H}).
\] Which, recalling that $\phi_H$ is the return map $\phi$, we see we have proved the desired isomorphism.\\

We have one more thing to resolve here, which is that we did not take a direct limit. However, since this is local homology, we know that in any given degree the homology will become stable for some large $2k+1$, the dimension of our sphere. This fact is discussed at various points throughout \cite{GG:convex} and follows from a related property for $S^1$-equivariant Hamiltonian Floer homology. So, we can prove this isomorphism in any degree via the above method and it is therefore true in all degrees. 
\end{proof}

\begin{Remark}
Much of this argument should go through for admissible iterated orbits as well.
\end{Remark}

\section{Proof of Proposition \ref{autonomous H}}
\label{sec:propproof}

In this section we prove that we can replace our original Hamiltonian $H$ with a new Hamiltonian $\hat H$ which has the properties 
\begin{itemize}
\item{($\hat H$1)} $\hat H$ does not depend on $\theta$; it is an autonomous Hamiltonian. 
\item{($\hat H$2)} $\phi^t_{\hat H}$ has $0$ as an isolated fixed point of $B^{2n}$ for $t\in (0,2)$
\item{($\hat H$3)} $\hat H(0) =0$
\end{itemize}

In order to achieve this goal we need to go through several steps. The first thing we must do is show that we can handle the degenerate and nondegenerate part of our Hamiltonian diffeomorphism $\phi^1_H$ separately, by splitting it into a totally degenerate Hamiltonian diffeomorphism generated by a Hamiltonian $\tilde H$ and a nondegenerate Hamiltonian diffeomorphism generated by a Hamiltonian $Q$. Recall that a Hamiltonian diffeomorphism is nondegenerate if its linearized return map does't have 1 as an eigenvalue, and that it is totally degenerate if all of its eigenvalues are 1. As it turns out it is fairly easy to show condition ($\hat H1$) can be attained in the nondegenerate case, and we can do this at the same time that we prove we can split our Hamiltonian diffeomorphism in Section \ref{sec:split diffeo}.\\

It will take a fair bit of work to show we have an autonomous replacement in the totally degenerate case. We will first show that we can replace $\tilde H$ by a Hamiltonian $K$, which is autonomous for short time and has the origin as an isolated fixed point for $t\in(0.1]$, as well as enjoying other useful technical properties. We  then leverage this $K$ to get a new Hamiltonian $\hat K$ which is actually autonomous. We will repeatedly encounter the issue that the known replacement techniques will give us a Hamiltonian which is not periodic. This is an issue as the Hamiltonian needs to be periodic for its corresponding symplectic form to be defined on $W$, so after the initial replacements we make necessary tweaks to get a periodic Hamiltonian. This is all done in Section \ref{degenerate H}. \\

In replacing our Hamiltonian $H$ we will need to make sure that we are not changing the local Hamiltonian Floer homology of $H$ up to isomorphism as well as not affecting the symplectic homology as derived from $\omega_H$. For the former we will use formal properties of Hamiltonian Floer homology. For the latter we will make repeated use of Lemma \ref{thm:lemma1} and Lemma \ref{thm:lemma2} in order to show that the symplectic form associated with the new Hamiltonian and the symplectic form associated with the old Hamiltonian compute the same homology with respect to $f$ on $W$. As such, we will additionally have to show our Hamiltonians satisfy the hypothesis of one of these lemmas, which may involve proving additional technical conditions at any given step.\\

As for the condition $(\hat H2)$, this will be proved naturally on the totally degenerate side as a consequence of how we go about making the totally degenerate part autonomous. However, we will still need to prove this on the nondegenerate side and we do so in Section \ref{nondegenerate H} after we have finished replacing the totally degenerate part. Finally $(\hat H3)$we essentially get for free and we resolve it at the very end.

\subsection{Splitting a Hamiltonian diffeomorphism}
\label{sec:split diffeo}

The goal of this section is to show a Hamiltonian diffeomorphism that fixes the origin can always be deformed into a split Hamiltonian diffeomorphism, one part of which is totally degenerate and one Hamiltonian diffeomorphism that is nondegenerate and, more importantly, it can be done in such a way that we get isomorphisms for all the relevant homology groups. We also show that the nondegenerate part can be generated by an autonomous quadratic Hamiltonian in the process.\\

\begin{Claim} 
\label{H splits}
We may replace our original Hamiltonian $H$ defined on $B^{2n}$ by a Hamiltonian $Q+\tilde H$ where $\phi^1_{Q+\tilde H}= (\phi^1_Q, \phi^1_{\tilde H})$, with $\phi^1_Q$ nondegenerate, $\phi^1_{\tilde H}$ totally degenerate and the following facts hold true:
\begin{itemize}
	\item{(Split 1)} $HF^{loc}_*(\phi^1_H) \cong HF^{loc}_{*+s}(\phi^1_{Q+\tilde H})$
	\item{(Split 2)} $HF^{loc}_*(\gamma, \omega_H , f) \cong HF_{*+s}^{loc}(\gamma,d(r-(Q+\tilde H))\wedge d \theta +\sigma, f)$
	\item{(Q1)} The Hamiltonian $Q$ is quadratic and thus autonomous. 
\end{itemize} 
\end{Claim}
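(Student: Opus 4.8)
\textbf{Proof proposal for Claim \ref{H splits}.}

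The plan is to linearize the return map at the origin and use a symplectic linear normal form to split off the degenerate directions, then realize the linear splitting as a path of genuine Hamiltonian diffeomorphisms. First I would write $A = d_0\phi^1_H \in Sp(2n)$ for the linearized return map. Since $\gamma$ is isolated the fixed point at the origin is isolated but $A$ need not be nondegenerate; decompose $\mathbb R^{2n} = V_1 \oplus V_0$ as a symplectic direct sum where $V_1$ is the generalized eigenspace of $A$ for the eigenvalue $1$ and $V_0$ is the sum of the remaining generalized eigenspaces (this is a standard fact: eigenvalues $\lambda$ and $1/\lambda$ pair up, so each is a symplectic subspace, and $A$ preserves the splitting). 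On $V_0$ the restriction $A|_{V_0}$ has no eigenvalue $1$, i.e.\ is nondegenerate; on $V_1$ it is unipotent, i.e.\ totally degenerate. Choose linear symplectic coordinates adapted to $V_1 \oplus V_0 = B^{2n} \cap V_1 \oplus B^{2n} \cap V_0$. The nondegenerate block $A|_{V_0}$ is isotopic within $Sp(V_0)$, through a path with no eigenvalue $1$ at any intermediate time other than possibly the endpoints (and one can arrange even that away), to the time-one map of a quadratic Hamiltonian $Q$ on $V_0$: indeed any element of $Sp(2m)$ lies in the image of the exponential of $\mathfrak{sp}(2m)$ after at most composing with one standard reflection block, and one can choose the generator $Q$ quadratic; crucially this isotopy can be taken to keep $1$ out of the spectrum throughout, so the linearized fixed point at $0$ stays uniformly isolated in the $V_0$-directions.

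Next I would upgrade this linear statement to the nonlinear germ. Write $\phi^1_H = A \cdot g$ where $g$ is a symplectic germ tangent to the identity at $0$. Using the linear path above I deform $A$ to $Q_{\mathrm{lin}} \oplus (\text{id on } V_1)$, where $Q_{\mathrm{lin}}$ is the time-one flow of the quadratic $Q$ on $V_0$; this deformation is realized by a path of quadratic Hamiltonians $Q^s$ on $V_0$ composed trivially on $V_1$, hence by a path of Hamiltonians $Q^s \# (\text{germ generating } g)$ on $B^{2n}$. Along this path the origin is fixed for all $s$ (every diffeomorphism in sight fixes $0$), and it is uniformly isolated: in the $V_0$-directions the linearization never has eigenvalue $1$, so $0$ is the only nearby fixed point there, while in the $V_1$-directions nothing changes and isolatedness is inherited from $\phi^1_H$ by Remark \ref{H fixes origin}. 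Thus the path $H^s$ satisfies the hypotheses of Lemma \ref{thm:lemma1} (origin uniformly isolated, origin fixed for all $\theta$ since these are germs of symplectomorphisms fixing $0$, extended $\theta$-independently or, if $H$ was $\theta$-dependent, carrying the $\theta$-dependence along harmlessly), and Corollary \ref{l1c} gives $HF^{loc}_*(\gamma,\omega_{H^0},f) \cong HF^{loc}_*(\gamma,\omega_{H^1},f)$ together with the equivariant statement and $HF^{loc}_*(\phi^1_{H^0}) \cong HF^{loc}_*(\phi^1_{H^1})$. The endpoint Hamiltonian has time-one map $Q_{\mathrm{lin}}\cdot g$ with $Q_{\mathrm{lin}}$ acting only on $V_0$ and $g$ tangent to the identity; one more application of Lemma \ref{thm:lemma2} (replacing this by the genuinely split map $\phi^1_Q \times \phi^1_{\tilde H}$, where $\tilde H$ generates the $V_1$-germ and the two time-one maps agree because $Q_{\mathrm{lin}}\cdot g$ and $\phi^1_Q\times\phi^1_{\tilde H}$ differ by a loop — they have the same linearization and the same germ up to a Hamiltonian loop fixing $0$) produces the split form $Q + \tilde H$, at the cost of a degree shift $s$ given by twice a Maslov index, exactly as in Corollary \ref{l2c}. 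This yields (Split 1), (Split 2), and (Q1).

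The main obstacle, and where I would spend most of the care, is the middle step: producing the linear isotopy from $A|_{V_0}$ to the time-one flow of a quadratic Hamiltonian \emph{while keeping the eigenvalue $1$ out of the spectrum at every intermediate time}. The surjectivity of $\exp:\mathfrak{sp}(2m)\to Sp(2m)$ fails in general (e.g.\ for certain negative-eigenvalue blocks), so one cannot simply write $A|_{V_0} = \exp(\text{quadratic})$; instead one argues componentwise on the canonical symplectic normal form of $A|_{V_0}$ (blocks for real eigenvalue pairs $\{\lambda,1/\lambda\}$, complex quadruples, and eigenvalue $-1$ Jordan blocks), checking in each case that the standard normal-form path stays nondegenerate, and absorbing any topologically unavoidable obstruction into the Maslov-index shift $s$ rather than into a change of homotopy class. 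A clean way to package this is: the nondegenerate germ $\phi^1_H|_{V_0\text{-directions}}$ has a well-defined local Floer homology by Remark \ref{H fixes origin}, and (LFH1)/(LFH2) tell us it only depends on the time-one map up to the loop-induced shift, so it suffices to connect $A|_{V_0}$ to \emph{some} quadratic time-one map through nondegenerate maps; existence of such a connection is a statement purely about the identity component structure of the set of $Sp(2m)$-elements without eigenvalue $1$, which is path-connected, and any element of it can be steered into the image of a quadratic generator. I would write this out as a short lemma in linear symplectic algebra preceding the main argument. Everything else — the $V_1 \oplus V_0$ decomposition, the bookkeeping of degree shifts, the verification of the hypotheses of Lemmas \ref{thm:lemma1} and \ref{thm:lemma2} — is routine given the tools already assembled in Sections \ref{local model:HM} and \ref{sec:modification}.
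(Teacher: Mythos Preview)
Your proposal has a genuine gap at the point where you try to decouple the nondegenerate and totally degenerate directions. Writing $\phi^1_H = A\cdot g$ with $g$ tangent to the identity and then deforming only the linear factor $A$ does \emph{not} give you control over the fixed points along the way, nor does it produce a split map at the end. Concretely: your isolation argument splits the discussion into ``$V_0$-directions, where the linearization has no eigenvalue $1$'' and ``$V_1$-directions, where nothing changes,'' but the map $(A_s\oplus U)\circ g$ does not respect this splitting --- the germ $g$ mixes $V_0$ and $V_1$ at higher order, so the fixed-point equation is coupled and you cannot conclude uniform isolation from the linearization alone (the linearization still has eigenvalue $1$ on $V_1$). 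Likewise at the endpoint: the map $(Q_{\mathrm{lin}}\oplus\mathrm{id})\cdot g$ is \emph{not} a product $\phi^1_Q\times\phi^1_{\tilde H}$, because $g$ still couples the factors. Your appeal to Lemma \ref{thm:lemma2} to pass to a genuinely split map fails: that lemma requires the two time-one maps to \emph{coincide}, and ``differing by a loop'' means exactly that --- having the same linearization is far weaker and does not suffice.

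The paper handles precisely this difficulty by moving to generating functions. One chooses a Lagrangian complement $N=N_V\times N_W$ to the diagonal adapted to the splitting, so that the generating function $F$ of $\phi^1_H$ has $d^2F_0$ already split. Fixed points of $\phi$ correspond to critical points of $F$, and now the problem is to deform $F$ to a split function $q+h$ while keeping $0$ a uniformly isolated critical point. This is done by the implicit function theorem (locating the fibrewise critical locus $\Sigma=\operatorname{graph}\Phi$ of $F|_{V\times w}$, which is tangent to $W$ at $0$), an isotopy moving $\Sigma$ to $W$, and then the parametric Morse lemma on each slice $V\times\{w\}$. The resulting $F_1=q+h$ has $q$ a nondegenerate quadratic form on $V$, so the corresponding Hamiltonian $Q$ is quadratic. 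This construction is exactly what is needed to untangle the nonlinear coupling between the two factors while preserving isolation --- something your purely linear deformation cannot do. After that, the adaptation to the $\omega_H$-side via Lemmas \ref{thm:lemma1} and \ref{thm:lemma2} (together with a reparametrization trick to restore one-periodicity) proceeds roughly as you sketch.
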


We start by saying a few words about when we have a Hamiltonian diffeomorphism which splits as a totally degenerate part and a non-degenerate part and how this will allow us to make modifications to those respective parts individually without affecting the local Floer homology groups involved.\\*
 
 {\bf Split Diffeomorphism.} Assume that $\mathbb B^{2n}$ splits as the product of two symplectic vector spaces $V$ and $W$ where our Hamiltonian also splits as $H=H_V+H_W$. Here $H_V$ and  $H_W$ are functions only depending on the coordinates of $V$ and $W$ respectively and whose respective Hamiltonian flows fix the origin. Furthermore, assume that the time-one flow $\phi_{H_V}$ of $H_V$ is nondegenerate and that the time-one flow $\phi_{H_W}$ of $H_W$ is totally degenerate. Then by applying the K{\"u}nneth formula for Hamiltonian Floer homology we get  
 \[
 	 HF^{loc}_*(\phi^1_{H_V +H_W}) \cong \bigoplus_{n+k=*} HF^{loc}_n(\phi^1_{H_V}) \otimes HF^{loc}_k (\phi^1_{H_W})
 \] Now, if we find a way to replace $H_V$ with some new Hamiltonian $Q$ and $H_W$ with some $K$. We do this in such a way that $HF^{loc}_*(\phi^1_{H_V}) \cong HF^{loc}_*(\phi^1_{Q})$ and $HF^{loc}_*(\phi^1_{H_W}) \cong HF^{loc}_*(\phi^1_{K})$. This would allow us to say  
 \[
 	HF^{loc}_*(\phi^1_{H_V +H_W}) \cong HF^{loc}_*(\phi^1_{Q +K}) 
 \] by applying K{\"u}nneth again. Thus if we can show our Hamiltonian diffeomorphism is a split diffeomorphism we are able to replace a given Hamiltonian in one set of coordinates with a suitable new one without affecting our Hamiltonian Floer homology.\\*
 
 Now more generally if $H$ is not necessarily split, but the diffeomorphism $\phi_H$ splits as $(\phi_V,\phi_W)$ where $\phi_V$ is non-degenerate and $\phi_W$ is totally degenerate. Then $\phi_V$ and $\phi_W$ are germs of symplectomorphisms fixing the origin in Euclidean space. Because of this both $\phi_V$ and $\phi_W$ are Hamiltonian diffeomorphisms. Then as above denote by $H_V$ the Hamiltonian for $\phi_V$ and $H_W$ the Hamiltonian for $\phi_W$. Now it is not necessarily the case that $H=H_V+H_W$. However, the local Floer homology is defined by $\phi_H$ up to a shift in degree as described in (LFH2) and we have that 
 \[
 	HF^{loc}_*(\phi_H) = HF^{loc}_{*+l}(\phi_{H_V+H_W}).
 \] As such we can now treat $H$ as if it were split.\\*
 
 {\bf The General Case.} Let $\phi$ be the germ of a Hamiltonian diffeomorphism fixing $0\in B^{2n}$ and generated by $H$. For some decomposition $B^{2n}=V\times W$ the linearization $d\phi_0$ splits as the direct sum of a symplectic linear map on $V$ whose eigenvalues are all different from one and a symplectic linear map on $W$ with all eigenvalues equal to one. We will show that $\phi$ is homotopic to a split map via Hamiltonian diffeomorphisms with uniformly isolated fixed point at $0\in B^{2n}$ and linearization $d\phi_0$. We will denote said homotopy by $\phi_s$ where $s\in [0,1]$.\\*
	
	More specifically let $G_s$ be the Hamiltonian generating $\phi_s$ as its time-one map which is obtained, up to a  reparameterization, by concatenating the flow $\phi_{H}^t, t\in [0,1],$ with the homotopy $\phi_\xi, \xi\in[0,s]$. Then $0\in B^{2n}$ would be a uniformly isolated fixed point of $\phi_{G_s}$ for all $s \in [0.1]$. Thus by (LFH1) we would have
 \[
 	HF^{loc}_*(\phi_H) \cong HF^{loc}_{*}(\phi_{G_1}).
 \] Now here $\phi_{G_1}$ is split so we may perturb the time-1 map of $H$ and arrive at something which splits as desired.\\
 
 Now let us construct the symplectic homotopy $\phi_s$. Let $N_V$ and $N_W$ be Lagrangian compliments to the diagonals $\Delta_V\subset V\times\bar V$ and $\Delta_W\subset W\times \bar W$ respectively. We ask that $N_V$ is transverse to the graph of $d\phi_0|_V$ and $N_W$ is transverse to $d\phi_0|_W$. Then $N= N_V\times N_W$ is a Lagrangian compliment to the diagonal $\Delta \subset B^{2n}\times\bar B^{2n}$ transverse to $d\phi_0$ and hence to $\phi$ on a small neighborhood of $0 \in B^{2n}$. Denote by $F$ the generating function of $\phi$ with respect to $N$ on this neighborhood. Note that $0 \in B^{2n}$ is an isolated critical point of $F$ and that $d^2F_0$ is split.\\
 
 Now we will construct a family of functions $F_s, s\in[0,1],$ on a neighborhood of $0\in B^{2n}$ with $F_0=F$ and such that:
  \begin{itemize}
	\item$0 \in B^{2n}$ is a uniformly isolated critical point of $F_s, \forall s \in [0,1]$ 
	\item $d^2(F_s)_0=d^2F_0$ 
	 \item $F_1$ is split, That is to say that $F_1$ is the sum of some function $q$ on $V$ and another function $h$ on $W$ on a neighborhood of $0\in B^{2n}$.
 \end{itemize}

Once the family $F_s$ is constructed we define $\phi_s$ in the obvious way by identifying the graphs of $\phi_s$ and $dF_s$ in $B^{2n}\times\bar B^{2n}=N \times \Delta=T^*\Delta$. Note that the graph of $dF_s$ is transverse to the fibers of the projection $\pi: B^{2n}\times \bar B^{2n} \to B^{2n}$ near $0\in B^{2n}$ since $d^2(F_s)_0=d^2F_0$. Additionally the graph of $dF$, which is identified with the graph of $\phi$, is transverse to the fibers. Now note that in the decomposition we find below  we will have  $F_1=q+h,$ where the function $q$ is a non-degenerate quadratic form on $V$ (in fact $d^2q=d^2F_0|_V$) and $h$ is a function on $W$ with an isolated critical point at the origin.\\

So we need only to find $F_s$, and we argue as follows. First observe that, by the implicit function theorem, there exists near $0 \in B^{2n}$ a unique smooth map $\Phi:W \to V$ such that $\Phi(0)=0$ and $F|_{V \times w}$ has a critical point at $\Phi(w)$. Let $\Sigma$ be the graph of $\Phi$. Again by the implicit function theorem we can see that $d\Phi$ vanishes at the origin because $d^2F_0$ is split. As such $\Sigma$ is tangent to $W$ at $0 \in B^{2n}$. Now $F_s$ is constructed in two steps. First, we take an isotopy on a neighborhood of $0 \in B^{2n}$, fixing 0 and having the identity linearization at 0, to move $\Sigma$ to $W$.  This isotopy turns $F$ into a function, say $F_{0.5}$, such that $F_{0.5}|_{V \times w}$ has a non-degenerate critical point at $(0, w)$ for all $w$ near the origin. As the second step, we apply the parametric Morse lemma to $F_{0.5}|_{V \times w}$ to obtain a homotopy from $F_{0.5}$ to a function $F_1$ of the desired form $q + h$.\\*

{\bf Adapting to the Context of Symplectic Homology.} Now we explain how we apply this result in our specific case. First let's take $Q$ to be the Hamiltonian that corresponds to the generating function $q$ on the coordinates where our symplectomorphism are non-degenerate. Because the identification made between the neighborhood of the diagonal in $B^{2n}\times\bar B^{2n}$ and the cotangent bundle of the diagonal is a linear map, we can easily see a quadratic generating function corresponds to a quadratic Hamiltonian.  Thus $Q$ is quadratic and, in particular, autonomous. This is the function mentioned in (Q1). Next let's take $\tilde h$ to be the Hamiltonian corresponding to the totally degenerate coordinates where our generating function is $h$. Then we have just shown that  
\[
	HF^{loc}_*(\phi^1_{H}) \cong HF^{loc}_{*}(\phi^1_{G_1}). 
\] This is because as described above the family of generating function s all have the origin as an isolated fixed point and thus the associated Hamiltonian diffeomorphisms all have the origin as a uniformly isolated fixed point for all $s\in[0,1]$. Hence, we may apply (LFH1). \\

Next on the symplectic homology side we would like to apply Lemma \ref{thm:lemma1} to the family of Hamiltonian diffeomorphisms described above as $\phi_s$. Checking that we satisfy the hypothesis of Lemma \ref{thm:lemma1} we easily see that $\phi_s$ has the origin as a uniformly  isolated fixed point by construction. Secondly for a fixed $s_0\in [0,1]$ the Hamiltonian isotopy corresponding to $\phi_{s_0}$ is defined by concatenating the flow of $\phi^t_H$ for $t\in [01]$ with $\phi _{s}$ where $s\in [0,s_0]$. Now $\phi_H^t(0)=0$ by definition and every $\phi_s$ has zero as an isolated fixed point for $s\in [0,1]$ thus clearly $\phi_s(0)=0$. So we have that the isotopy for any $\phi_{s_0}$ fixes the origin for all time. The only remaining hypothesis is that the Hamiltonians $G_s$ corresponding to $\phi_s$ are 1-periodic for every $s$. This is not the case a priori so we have some work to do here.\\

Here we will introduce a trick that we will use several times thoughtout this final section to get a family $\hat G_s$ of one-periodic Hamiltonians such that $\phi^1_{\hat G_s} =\phi^1_{G_s}$ (in particular, for this case $\phi^1_{\hat G_1}$ splits)  and $\hat G_s$ satisfies all the assumptions of Lemma \ref{thm:lemma1}. Then we have $\phi^1_{\hat G_s} =\phi^1_{G_s}$, and we can apply Lemma \ref{thm:lemma2} at $s=0$ to get the following string of isomorphisms:
\[
	HF_*^{loc}(\gamma,\omega_H,f) \underset{Lemma \ref{thm:lemma2}} \cong HF_{*+l}^{loc}(\gamma,\omega_{\hat G_0},f)  \underset{Lemma \ref{thm:lemma1}}\cong HF_{*+l}^{loc}(\gamma,\omega_{\hat G_1},f),
\] where $\omega_{\hat G_s} =d(r-\hat G_s)\wedge d \theta + \sigma, s\in [0,1]$. \\

We first note that this shift in degree in homology comes from Lemma \ref{thm:lemma2}. Secondly, we have that $\phi_{G_1}^1$ splits and we will say that it splits with Hamiltonians $\tilde Q+\tilde H$, where $\tilde Q$ and  $\tilde H$ may differ from $Q$  and $\tilde h$ above because the Hamiltonian corresponding to $\hat G_1$ will be one-periodic whereas the originals from the $G_1$ may not be. The Hamiltonian $Q$ is important. It is quadratic and autonomous, both properties which $\tilde Q$ may not enjoy, and hence we will want to remember $Q$ as we want to bring it back into the picture soon. As for $\tilde h$ and $\tilde H$ at this stage we care much more that the totally degenerate part splits off as opposed to what it actually is.\\

Looking  at the Hamiltonian Floer homology on $B^{2n}$ side of things we have the following corresponding isomorphisms
\[
	HF_*^{loc}(\phi^1_H) \underset{(LFH2)} \cong HF_{*+l}^{loc}(\phi^1_{\hat G_0})  \underset{(LFH1)}\cong HF_{*+l}^{loc}(\phi^1_{\hat G_1}),
\]as long as we can find a suitable $\hat G_s$.\\

Now we find the desired $\hat G_s$. We let $\hat G_s(\theta,z) = \lambda'(\theta)  G_s(\lambda(\theta),z)$, where $\lambda$ is a smooth real valued function of one variable with the following properties: $\lambda'(0)=\lambda'(1)=0$ with $\lambda'>0$ otherwise, and $\int_0^1\lambda'(\theta) d\theta = 1$ namely $\lambda(1)=1$. This first condition on $\lambda'$ guarantees $\hat G_s$ is one-periodic as defined. The integration condition together with the definition for $\hat G_s$ give us that $\phi^1_{\hat G_s} =\phi^1_{G_s}$, which we can see by looking at 
\[
	\frac{\partial}{\partial \theta} \phi^{\lambda(\theta)}_{G_s}= \lambda'(\theta)X_{G_s(\lambda (\theta))}= X_{\hat G_s} =\frac{\partial}{\partial \theta} \phi^\theta_{\hat G_s}. 
\] So this is some map that follows the flow $\phi_{G_s}$ but reparameterizing time, the condition that $\lambda' >0$ except at two points ensures we are never standing still in time, yeilding the property that $0\in B^{2n}$ is a uniformly isolated fixed point set of $\phi^1_{\hat G_s}$ for all $s\in[0,1]$. These conditions also imply $\phi^\theta_{\hat G_s}(0)=0$.\\  

As a result, we know that $\hat G_s$ satisfies all the required assumptions of Lemma \ref{thm:lemma1} and we may apply it now. Next, a word on applying Lemma \ref{thm:lemma2}; we may apply this at $s=0$ since both $\hat G_0$ and $H$ are one-periodic and $\phi^1_H=\phi^1_{\hat G_0}$.\\

Let us finally return our attention to $\hat G_1 = \tilde Q+\tilde H$. Since $\phi_{\hat G_1}^1= \phi_{ G_1}^1$ and they are both split, we know that in particular $\phi^1_{Q} =\phi_{\tilde Q}^1$ and as such $\phi^1_{\hat G_1}= \phi^1_{Q+\tilde H}$. Additionally, we know that $Q+\tilde H$ and $\hat G_1$ are both one-periodic so may apply Lemma \ref{thm:lemma2} one last time to see that 
\[
	HF_{*}^{loc}(\gamma,\omega_{G_1},f)  \cong HF_{*+j}^{loc}(\gamma,d(r-(Q+\tilde H))\wedge d \theta +\sigma, f)
\] which proves (Split 2).We can then apply (LFH2) to get the corresponding isomorphism
\[
	HF_{*}^{loc}(\phi^1_{G_1})  \cong HF_{*+j}^{loc}(\phi^1_{Q+\tilde H})
\] which proves (Split1).The final item in Claim \ref{H splits} follows.\\

\subsection{The degenerate Hamiltonian}
\label{degenerate H}

Now that we have shown we can split our Hamiltonian diffeomorphism in such a way that allows us to work with the degenerate part and the nondegenerate part independently, we are ready to tackle the degenerate part. Our end goal here is to replace the Hamiltonian $\tilde H$ by something autonomous without changing any relevant homology up to isomorphism. Our first step is to once again look at generating functions in Section \ref{sec:GF} and this time we will find one for $\tilde H$. We use this generating function to find a Hamiltonian $\tilde F$ that has the same time-one map as $\tilde H$ but also has the origin as an isolated fixed point for its time-$t$ flow when $t \in (0,1]$. In the next step $\tilde F$ satisfies the conditions required to apply a theorem from [GCC] which produces a periodic Hamiltonian $K$  which is autonomous for short time. We will show that we can replace $\tilde H$ by $K$ and not affect any relevant homology. This is done in Section \ref{find K}. \\

Finally in Section \ref{autonomous K} we use the Hamiltonian $K$ which posses certain properties allowing us to deform the Hamiltonian $Q+K$ into a new Hamiltonian which is autonomous proving $(\hat H1)$. The new autonomous Hamiltonian also lets us show that $(\hat H2)$ can be made true on the totally degenerate side.\\

\begin{Claim}
\label{H to K}
We may replace $\tilde H(\theta, z)$ by some Hamiltonian $K$ that has a number of technical properties which we will apply to obtain isomorphisms on homology. These properties are:
\begin{itemize}
	\item{(K1)} $\phi^1_{\tilde H}=\phi^1_K$
 	\item{(K2)}  $K$ is one-periodic.
 	\item{(K3)} $\phi^t_K$ is autonomous for small $t$, say $t\in [0,\epsilon)$. 
	\item{(K4)}  For $\phi^t_K$ we have $0 \in. B^{2n}$ is an isolated fixed point for all $t \in (0,1]$
\end{itemize}
\end{Claim}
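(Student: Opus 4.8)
The plan is to route the replacement of $\tilde H$ through generating functions, much as in the proof of Claim~\ref{H splits}, and then to invoke the normalization procedure of \cite{Gi:CC} for Hamiltonians whose flow has an isolated fixed point throughout. Write $\phi:=\phi^1_{\tilde H}$; by Claim~\ref{H splits} this is the germ of a totally degenerate symplectomorphism fixing the origin, and, since $\phi^1_H$ has the origin as an isolated fixed point (Remark~\ref{H fixes origin}) and the splitting is a product, so does $\phi$. Choosing a Lagrangian complement $N$ to the diagonal transverse to $d\phi_0$ as in the proof of Claim~\ref{H splits}, let $F$ be the generating function of $\phi$ with respect to $N$ on a small neighborhood of the origin; then the origin is an isolated critical point of $F$.

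First I would produce an auxiliary Hamiltonian $\tilde F$ from the linear homotopy of generating functions $sF$, $s\in[0,1]$. After possibly shrinking the ball and re-choosing $N$, each $sF$ is again the generating function of a symplectomorphism $\phi_s$ with respect to $N$, which gives a smooth path from $\phi_0=\mathrm{id}$ to $\phi_1=\phi$; let $\tilde F$ be the time-dependent Hamiltonian generating this path, normalized by $\tilde F_t(0)=0$. Then $\phi^1_{\tilde F}=\phi$, and because $d(sF)=s\,dF$ vanishes, for $s\neq 0$, exactly where $dF$ does, the fixed points of $\phi_s$ near the origin coincide with the critical points of $F$ near the origin, that is, with $\{0\}$ alone; hence $\phi^t_{\tilde F}$ has the origin as an isolated fixed point for every $t\in(0,1]$, uniformly, in a neighborhood depending only on the critical-point structure of $F$.

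Next I would feed $\tilde F$ into the reparametrization--normalization result of \cite{Gi:CC}: it is precisely the hypothesis that the fixed point be isolated along the whole flow $\phi^t_{\tilde F}$, $t\in(0,1]$, that makes that machinery applicable, which is why the passage through $\tilde F$ rather than $\tilde H$ is needed. Its output is a Hamiltonian $K$ with $K_t$ independent of $t$ for $t\in[0,\epsilon)$, giving (K3), and with $\phi^1_K=\phi^1_{\tilde F}=\phi^1_{\tilde H}$, giving (K1); to obtain (K2) one closes up the resulting path to a one-periodic Hamiltonian by the reparametrization trick $K(\theta,z)\mapsto\lambda'(\theta)K(\lambda(\theta),z)$ already used in the proof of Claim~\ref{H splits}, chosen so as not to disturb the behaviour near $\theta=0$; and (K4) is inherited from the corresponding property of $\tilde F$ because the normalization only reparametrizes the path and alters it near $t=0$ while keeping the intermediate diffeomorphisms inside the fixed isolating neighbourhood. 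For the homology bookkeeping one then observes that $\phi^1_{Q+\tilde H}=(\phi^1_Q,\phi^1_{\tilde H})=(\phi^1_Q,\phi^1_K)=\phi^1_{Q+K}$ with both Hamiltonians one-periodic, so (LFH2) gives $HF^{loc}_*(\phi^1_{Q+\tilde H})\cong HF^{loc}_{*+s}(\phi^1_{Q+K})$ and Lemma~\ref{thm:lemma2} with Corollary~\ref{l2c} gives the matching isomorphism $HF^{loc}_*(\gamma,\omega_{Q+\tilde H},f)\cong HF^{loc}_{*+s}(\gamma,\omega_{Q+K},f)$ with the same shift, and likewise in the equivariant setting via (LEFH1).

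\textbf{The main obstacle} I expect is the second step: checking that $\tilde F$ meets the exact hypotheses of the normalization result of \cite{Gi:CC}, and that the $K$ it produces can be made genuinely one-periodic while \emph{simultaneously} retaining autonomy on $[0,\epsilon)$ and the uniform isolation of the origin along its entire flow — the standard normalization techniques do not return periodic Hamiltonians, so the reparametrization has to be threaded through without destroying (K3) or (K4). A secondary technical point, in the first step, is verifying that the linear homotopy $sF$ stays inside the class of functions from which one can recover honest symplectomorphisms relative to $N$, which may force one to shrink the ball or adjust $N$.
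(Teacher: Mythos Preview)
Your overall architecture matches the paper's: pass from $\tilde H$ to a generating function $F$, build $\tilde F$ from the linear homotopy $sF$ so that the origin is isolated for all $t\in(0,1]$, and then apply Lemma~6.1 of \cite{Gi:CC} to obtain $K$. The homology bookkeeping at the end via (LFH2) and Lemma~\ref{thm:lemma2} is also exactly what the paper does.

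The genuine gap is your treatment of (K4). You write that (K4) ``is inherited from the corresponding property of $\tilde F$ because the normalization only reparametrizes the path and alters it near $t=0$.'' This is not how the output of Lemma~6.1 in \cite{Gi:CC} behaves: the Hamiltonian $K$ it returns is not a reparametrization of $\tilde F$ away from $t=0$; rather, one only has the quantitative estimates $\|X_F-X_K\|\le (O(\|d^2F_0\|)+O_\phi(r))\|X_F\|$ and $\|\partial_\theta X_K\|\le(O(\|d^2F_0\|)+O_\phi(r))\|X_F\|$, together with $\|d^2K_0\|=O(\|d^2F_0\|)$. The paper uses these to run a Hofer--Zehnder style argument (their separate lemma in \S6.2.2): a ``nearly autonomous'' vector field $Y_t$ satisfying $\|\partial_t Y_t\|\le\epsilon\|Y_0\|$ has no nontrivial periodic orbit of period less than $2\pi/(\tfrac{\epsilon}{1-\epsilon}+a)$, where $a$ bounds $\|DY_t\|$. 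To push this threshold above $1$ one must make both $\epsilon$ and $a$ small, and here is where the \emph{totally degenerate} hypothesis is finally used: by (GF2), $\|d^2F_0\|=\|d(\phi^1_{\tilde H})_0-\mathrm{id}\|$, and Lemma~5.1 of \cite{Gi:CC} lets one conjugate $\phi^1_{\tilde H}$ so that this quantity is as small as desired. Without this chain of estimates, nothing prevents $\phi^t_K$ from acquiring extra fixed points near the origin for some $t\in(0,1]$, so (K4) is not automatic.

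A smaller point: you misidentify the main obstacle. Periodicity (K2) is already part of the output of Lemma~6.1 in \cite{Gi:CC} (their (K.2)), so no extra reparametrization trick is needed to close up $K$; the real work, and the place where the paper spends an entire lemma and the total-degeneracy assumption, is (K4).
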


In order to do this is we first find the Hamiltonian $\tilde F$ mentioned above. The proof that $\tilde F$ exists uses the technique of generating functions. After producing $\tilde F$  we will end up with something that is not periodic, and therefore is not a candidate to replace $\tilde H$. We will therefore need to make another replacement which is where we introduce the aforementioned Hamiltonian $K$ satisfying the desired technical conditions.\\

\subsubsection{The Generating Function}
\label{sec:GF}
  
In order to find $\tilde F$ in a way that ensures that $0 \in B^{2n}$ is the only fixed point of $\phi_{\tilde F}^t$  for all $t \in (0,1]$ and $\phi_{\tilde H}^1 =\phi^1_{\tilde F}$, we use generating functions completely analogous to the approach taken in section 6 of \cite{Gi:CC}. Since $0 \in B^{2n}$ is an isolated fixed point of $\phi^1_{\tilde H}$, and we are interested in the local Floer homology, we only need a generating function which is defined in a small neighborhood of the origin. This is a standard technique for producing a generating function which we have used once already in this paper. First, let us look at the twisted product $B^{2m} \times \bar B^{2m}$ with the symplectic form $\sigma \oplus -\sigma$. Here the diagonal $\Delta$ is identified with $B^{2m}$ and we call $\Gamma = (z,\phi_{\tilde H}^1(z)) \subset B^{2m} \times \bar B^{2m}$ the graph of $\phi_{\tilde H}^1$. In this symplectic space both $\Gamma$ and $\Delta$ are Lagrangian submanifolds. We want to pick some Lagrangian complement  $N$ such that $N$ is a subspace transverse to both $d\phi_{\tilde H}^1$ at the origin and $\Delta$ which is simply an exercise in symplectic linear algebra. With $N$ being transverse to $d(\phi_{\tilde H}^1)_0$ we ensure that $N$ is transverse to $\Gamma$ in some neighborhood of the origin and we have satisfied the needed conditions to have a generating function defined. With this normal Lagrangian fixed, we may now identify $N\times \Delta$ with $T^*B^{2m}$. Under this identification, we have a generating function $F$ corresponding to our Hamiltonian in the following sense: $dF=\Gamma$. Then we take our replacement for $\tilde F$ to be the Hamiltonian whose time-$t$ flows have graphs corresponding to $tdF$. The resulting generating function has the following properties:\\

\begin{itemize}
	\item{(GF1)} A point $p$ is an isolated critical point of $F$ if and only if it's an isolated fixed point of $\phi^1_{\tilde H}$
	\item{(GF2)} $||d^2F_0|| =||d(\phi^1_{\tilde H})_0-id||$ 
 \end{itemize}
 
\subsubsection{Finding the Hamiltonian K}
\label{find K}

Now we wish to replace $\tilde H$ with some Hamiltonian $K(\theta,z)$, which is autonomous for small $\theta$, has $0 \in B^{2n}$ as an isolated fixed point for $t\in (0,1]$, and where $K$ is periodic ensuring that $K$ lives on our local model . In a setting where we have a generating function $F$ which satisfies the conditions (GF1) and (GF2) we may apply lemma 6.1 in \cite{Gi:CC}. This gives us a Hamiltonian $K$ that has the following properties: 
\begin{itemize}
	\item{(K.1)} $\phi^1_K= \phi^1_{\tilde F}$ (which is in turn equal to $\phi^1_{\tilde H}$)
	\item{(K.2)} $K$ is one-periodic in time
	\item{(K.3)} $K$ is autonomous for short time
	\item{(K.4)} $0 \in B^{2n}$ is an isolated critical point of $K$ and $K(0)=0$ for all $\theta \in S^1$
	\item{(K.5)} $||d^2K_0||= O(||d^2F_0||)$
	\item{(K.6)}$ ||X_F-X_K||  \le (O(||d^2F_0||)+ O_\phi(r)) \cdot ||X_F||$ 
	\item{(K.7)} $|| \frac{\partial}{\partial \theta} X_K||\le (O(||d^2F_0||)+ O_\phi(r)) \cdot ||X_F||.$
\end{itemize}

Here by $ ||X_F-X_K||  \le (O(||d^2F_0||)+ O_\phi(r)) \cdot ||X_F||$, we mean there exists some $C_1$ independent of $\phi^1_{\tilde H}$ and some $C_2(\phi^1_{\tilde H})$ depending on $\phi^1_{\tilde H}$ such that 
\[
	 ||X_F-X_K||  \le (C_1||d^2F_0||+ C_2(\phi^1_{\tilde H})r) \cdot ||X_F||.
\]

The conditions (K.1), (K.2),(K.3) correspond exactly to (K1), (K2), and (K3). The condition (K.4) implies the Hamiltonian vector field $X_K$ has an isolated critical point at the origin for all time. The task is then to leverage this and the remaining conditions to prove (K4)

The technical conditions (K.4), (K.6) and (K.7) will allow us to adapt a proof by Hofer and Zehnder to show that the only fixed points of $\phi^\theta_K$ for short time are zeroes of the Hamiltonian vector field $X_K$. We then show we can make this change from $\tilde F$ to $K$ in such a way that $0\in B^{2n}$ remains an isolated fixed point for all time in $(0,1]$. \\

 This is a lemma about time dependent flows that gives a condition that ensures we have no fixed points other than the origin for short time. This proof is adapted from one presented in \cite{HZ}. 
 
 \begin{Lemma}
 Let $Y_t$ be a time dependent vector field on $B^{2n}$ with:
 \begin{itemize}
 	\item (i) $Y_t(0)$ is an isolated 0 of $Y_t \forall t $
	\item (ii)$ ||\frac{\partial}{\partial t} Y_t|| \le \epsilon||Y_0||$
\end{itemize}
Then the flow of $Y_t$ has no fixed points other than the origin for short time. The specific bound on time is given in the proof.
\end{Lemma}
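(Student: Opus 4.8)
The plan is to adapt to a time-dependent vector field the Hofer--Zehnder argument that a Hamiltonian with small enough Hessian admits no fast periodic orbits (see \cite{HZ}); here hypothesis (ii) plays the role of the smallness of the second derivative. Fix a ball $U$ about the origin small enough that, by (i), the only zero of $Y_0$ in $U$ is $0$, and set $L=\sup\{\|DY_t(z)\|:z\in U,\ t\in[0,1]\}$. Let $\phi^t$ denote the flow of $Y_t$. Suppose $p\in U$ satisfies $\phi^T(p)=p$, where $T$ is small enough that the orbit $x(t)=\phi^t(p)$ stays in $U$ for $t\in[0,T]$ (this holds for all $p$ in a slightly smaller ball once $T$ is below a constant depending only on $\sup_U\|Y_t\|$). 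We must show $p=0$.

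First I would record the ODE identities for $y(t):=\dot x(t)=Y_t(x(t))$. Since $\phi^T(p)=p$ gives $x(T)=x(0)$, we have
\[
\int_0^T y(t)\,dt \;=\; x(T)-x(0)\;=\;0,
\]
so $y$ has zero mean on $[0,T]$; and differentiating, $\dot y(t)=(\partial_tY_t)(x(t))+DY_t(x(t))\,y(t)$. Next I would feed in hypothesis (ii), read pointwise along the orbit: from $\|(\partial_sY_s)(z)\|\le\epsilon\|Y_0(z)\|$ we get $|Y_t(x(t))-Y_0(x(t))|\le\int_0^t\epsilon\,|Y_0(x(t))|\,ds = t\epsilon\,|Y_0(x(t))|$, whence, once $T\epsilon\le\frac{1}{2}$, the triangle inequality gives $|Y_0(x(t))|\le 2|y(t)|$ and therefore $|(\partial_tY_t)(x(t))|\le 2\epsilon|y(t)|$. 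Combining, $|\dot y(t)|\le (L+2\epsilon)\,|y(t)|$ for every $t\in[0,T]$.

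Now I would apply the Poincar\'e inequality for zero-mean functions on an interval: $\int_0^T y=0$ implies $\|y\|_{L^2[0,T]}\le\frac{T}{\pi}\|\dot y\|_{L^2[0,T]}$ (the optimal constant, attained by $\cos(\pi t/T)$, since $(\pi/T)^2$ is the first nonzero Neumann eigenvalue of $-\partial_t^2$ on $[0,T]$). With the pointwise bound on $\dot y$ this yields
\[
\int_0^T|y|^2 \;\le\;\frac{T^2}{\pi^2}\int_0^T|\dot y|^2 \;\le\;\frac{T^2(L+2\epsilon)^2}{\pi^2}\int_0^T|y|^2 ,
\]
so as soon as $T<\pi/(L+2\epsilon)$ (together with $T\le\frac{1}{2\epsilon}$ and the earlier ``orbit stays in $U$'' bound) we conclude $\int_0^T|y|^2=0$. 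Hence $y\equiv 0$, so $x(t)\equiv p$, so $Y_0(p)=y(0)=0$, and since $0$ is the only zero of $Y_0$ in $U$ this forces $p=0$. This produces the explicit time bound announced in the statement.

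The one genuinely delicate step is the middle one: converting (ii) into the estimate $|(\partial_tY_t)(x(t))|\le 2\epsilon|y(t)|$, i.e.\ controlling the temporal drift of the vector field \emph{relative to its size along the orbit} rather than in an absolute $C^0$ sense; everything else is the standard Neumann-eigenvalue inequality together with bookkeeping of constants. This is exactly where the almost-autonomous hypothesis (ii) is used, and reading it pointwise at $z=x(t)$ is consistent with the form of the estimates (K.6)--(K.7) to which this lemma will be applied.
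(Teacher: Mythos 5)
Your proof is correct and follows essentially the same Hofer--Zehnder-type argument as the paper: assume a closed orbit of small period, use the zero mean of the velocity together with a Wirtinger/Poincar\'e $L^2$ estimate and the near-autonomy hypothesis read along the orbit to force $\dot x\equiv 0$, then invoke isolation of the zero. The only differences are bookkeeping: you work directly on $[0,T]$ with the zero-mean interval inequality $\|y\|_{L^2}\le \frac{T}{\pi}\|\dot y\|_{L^2}$ and the bound $|Y_0(x(t))|\le 2|y(t)|$, arriving at the threshold $T<\pi/(L+2\epsilon)$, whereas the paper rescales to period one, uses the periodic Wirtinger constant $\frac{1}{2\pi}$ and the estimate $\|Y_0\|\le\frac{1}{1-\epsilon}\|Y_t\|$, arriving at $T<2\pi/\bigl(\tfrac{\epsilon}{1-\epsilon}+a\bigr)$.
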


Condition (ii) here is analogous to conditions (K.5) above and has been called being nearly-autonomous by some authors including N. Hingston when she introduced this idea in \cite{Hi}. 

\begin{proof}
Our aim now is to show there is some $t_0>0$ such that for $T<t_o$ we have that the time $T$ flow of $Y_t$ has no nontrivial fixed points, and hence the only fixed point is the origin by (i). Assume that $y(t)$ is a closed characteristic of $Y_t$ of period $T$. We may normalize to having period-one by taking $x(t)=y(Tt)$. Then we have 

\begin{align*}
	x(0)=x(1) && \dot x=TY_t(x).
\end{align*}

Next we note that by applying the vector valued mean value theorem we have that $||Y_t-Y_0|| \le \epsilon ||Y_0|| |t-0|$. And thus when time is bounded, as it is in our setting,  we have 
\begin{align}
\label{(iii)}
	 ||Y_t-Y_0|| \le \epsilon ||Y_0||.
\end{align} 
This condition corresponds to (K.6) above and we will use it shortly.\\

Recall for any time-one curve $c(t)$ with zero mean that by examining the Fourier expansion of $c(t)$ we have the $L^2$ estimate $||c|| \le\frac{1}{2\pi}||\dot c||$. Now as defined $\dot x$ has zero mean so we may look at 
\begin{align}
\label{path estimate}
	||\dot x||\le \frac{1}{2\pi}||\ddot x|| =  \frac{T}{2\pi}||\frac{\partial}{\partial t}Y_t + DY_t\cdot \dot x|| \le  \frac{T}{2\pi}\left (||\frac{\partial}{\partial t}Y_t||+ ||DY_t||\cdot||\dot x||\right).
\end{align}
Now by (ii) we have $ ||\frac{\partial}{\partial t} Y_t|| \le \epsilon||Y_0||$.  Then by equation (\ref{(iii)}) we have 
\[
	\left| ||Y_t||-||Y_0||  \right| \le ||Y_0-Y_t||\le \epsilon ||Y_0||
\] and hence
\[
	- \epsilon ||Y_0|| \le ||Y_t||-||Y_0||\le  \epsilon ||Y_0||.
\]From this it follows that 
\begin{align*}
	||Y_0||- \epsilon ||Y_0|| \le ||Y_t|| &\qquad\text{thus} & ||Y_0|| \le \frac{1}{1-\epsilon} ||Y_t||.
\end{align*} Now putting this all together we have 
\begin{align}
\label{VF bound}
	||\frac{\partial}{\partial t} Y_t|| \le \epsilon||Y_0|| \le  \frac{\epsilon}{1-\epsilon} ||Y_t|| =  \frac{\epsilon}{1-\epsilon} ||\dot x|| .
\end{align}
Now, substituting line (\ref{VF bound}) into the equation in line (\ref{path estimate}) we can see that 
\[
	||\dot x|| \le \frac{T}{2\pi}\left (\frac{\epsilon}{1-\epsilon} ||\dot x|| + ||DY_t||\cdot||\dot x||\right).
\] So if we have some $a$ as some upper bound for $||DY_t||$ then it follows that 
\[
	\frac{2\pi}{\frac{\epsilon}{1-\epsilon} +a}||\dot x|| \le T||\dot x||
\] From this we may conclude that if $T< \frac{2\pi}{\frac{\epsilon}{1-\epsilon} +a}$ that $\dot x =0$ implying that $x$ is a constant characteristic of $Y_t$. 
\end{proof}

Now there is one additional thing that we have to look at here, which is that in our case we want  at the flow of $X_K$ to have only trivial fixed points for $t\in(0,1]$. We can see that flow of $X_K$ has no nontrivial orbits of period  $T< \frac{2\pi}{\frac{\epsilon}{1-\epsilon} +a}$. But we want a Hamiltonian whose associated flow has the origin as an isolated fixed point for all time $t \in (0,1]$. So we need to show that  $\frac{\epsilon}{1-\epsilon} +a$ can be made less than $2\pi$ for our new Hamiltonian and then the claim (K4) will be proved.\\

Here we will show that we can make $\frac{\epsilon}{1-\epsilon}$ arbitrarily small if we can make we can make $||d(\phi^1_{\tilde H})_0-id||$ small. To this end it is sufficient if we show $\epsilon$ can be made small if $||d(\phi^1_{\tilde H})_0-id||$ is small. The statements in (K.6) and (K.7) together determine $\epsilon$ in terms of $(O(||d^2F_0||)+ O_\phi(r))$. That is $\epsilon$ can be made small if the quantity $(O(||d^2F_0||)+ O_\phi(r))$ can be made small. By looking at this in terms of (GF2) we see this quantity can be written as  $(O(||d(\phi^1_{\tilde H})_0-id||)+ O_\phi(r))$. We can make this small by shrinking the size of our ball, and are able to ensure that $||d(\phi^1_{\tilde H})_0-id||$ can be made as small as we like. We will address this shortly, but first we look at what is required in order for $a$ in the upper bound mentioned above to be made small.\\

In the Hamiltonian setting the object corresponding to $||DY_t(x)||$ is the quantity $||\nabla^2K(x)||$ which itself is bounded above by $\max ||d^2K_0|| +O(r)$. Then (K.5) and (GF2) together give us that $||\nabla^2K(x)|| = O(||d(\phi^1_{\tilde H})_0-id||)+ O(r)$. This is all to say that our upper bound $a$ can be made small if we shrink $r$ and can show that $||d(\phi^1_{\tilde H})_0-id||$ can be made small.\\

So for our final step we give some justification as to why $||d(\phi^1_{\tilde H})_0-id||$ can be taken to be small. This uses ideas presented in \cite{Gi:CC} Lemma 5.1, which states that by taking a change of coordinates we can assure that a totally degenerate symplectomorphism (or more specifically a conjugation of said symplectomorphism) can be produced so that its linearization at 0 is arbitrarily close to the identity. This is the condition we need to satisfy. This completes the proof of $(K4)$, and thus we have our candidate $K$ which will replace our original Hamiltonian $\tilde H$. \\

Finally, we say a few words on replacing $\tilde H$ with $K$.  We know $HF_*^{loc}(\phi^1_{\tilde H}) \cong HF_{*+s}^{loc}(\phi^1_{K})$ because they have the same time one map as stated in (K1) and we may apply (LFH2). Now what we are interested in $HF_*^{loc}(\phi^1_{Q+\tilde H})$ and $HF_*^{loc}(\phi^1_{Q+K})$. Both of these maps are split diffeomorphisms and in particular we have $\phi^1_{Q+\tilde H}= (\phi^1_{Q},\phi^1_{\tilde H}) = (\phi^1_{Q},\phi^1_{K})= \phi^1_{Q+K}$ so they have the same time one map additionally $Q+K$ is one-periodic because $K$ is as stated in (K2). Thus we may apply Lemma \ref{thm:lemma2} to obtain $HF_*^{loc}(\phi^1_{Q+\tilde H}) \cong HF_{*+s}^{loc}(\phi^1_{Q+K})$, as well as the corresponding statement for equivariant homology. Thus all desired isomorphisms have been proven.\\

\subsubsection{Producing an autonomous Hamiltonian}  
\label{autonomous K}

Now that we have replaced our degenerate Hamiltonian $\tilde H$ by some $K$ that has the properties outlined in Claim \ref{H to K}, namely that it is autonomous for short time, it is periodic, and has $0 \in B^{2n}$ is an isolated fixed point for all $t\in (0.1]$, we are interested in $\omega_{Q+K} = d(r-(Q+K))\wedge d\theta+\sigma$ which is the new symplectic form on $W$. Now the Hamiltonian flow of $f(r)$ with respect to $\omega_{Q+K}$ is
\begin{align*}
	\phi^t_f (r,\theta,z)= (r, f't+\theta, \phi_{Q+K}^{f't+\theta}(\phi_{Q+K}^\theta)^{-1}(z)) \qquad \qquad \qquad \,\,\,\\
	= (r, f't+\theta, \phi_{Q}^{f't+\theta}(\phi_{Q}^\theta)^{-1}(v),\phi_{K}^{f't+\theta}(\phi_{K}^\theta)^{-1}(w)),
\end{align*}
where $v$ are the coordinates on which we are nondegenerate and $w$ are the coordinates in which we are totally degenerate as described in Claim \ref{H splits}.\\

We want to replace the time dependent flow $\phi_K^t$ used here with something that is autonomous in such a way that our homology groups are all preserved up to isomorphism. The tools we will use are (LFH1) and Lemma \ref{thm:lemma1}. In order to do this we will give an $s$ dependent family $K^s$ where $s\in [t_0,1]$ where $t_0$ will be explained below, such that the hypothesis to apply the above facts are satisfied.

\begin{Claim}
\label{a nice K family}

Let $[0,2t_0]$ be a time interval on which $K$ is autonomous; this is guaranteed by (K3). There exists a family $K^s$ of Hamiltonians, with $s\in [t_0,1]$,that has the following properties:
\begin{itemize}
	\item (KS1) $K^s$ is periodic for all $s\in [t_0,1]$
	\item (KS2) $\phi^1_{K^s}$ has the fixed point set $0\in B^{2n}$ remaining uniformly isolated for all $s\in [t_0,1]$
	\item (KS3) $\phi^\theta_{K^s}(0)=0$ for all $s\in [t_0,1]$,$\theta\in [0,1]$
	\item (KS4) $K^{t_0}$ is an autonomous Hamiltonian. 
\end{itemize}
 
\end{Claim}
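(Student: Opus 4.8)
The plan is to produce the family $K^s$ by time-reparametrizing $K$, interpolating between $K^1=K$ at $s=1$ and an autonomous Hamiltonian at $s=t_0$. The device I would use is the following: for a smooth, weakly increasing $\mu\colon[0,1]\to\mathbb R$ with $\mu(0)=0$ and $\mu'\ge 0$ whose derivative extends to a smooth one-periodic function of $\theta$, set $K^{\mu}(\theta,z):=\mu'(\theta)\,K(\mu(\theta),z)$. Then $\phi^t_{K^{\mu}}=\phi^{\mu(t)}_K$; since $X_K$ vanishes at the origin for every time (as established in the construction of $K$) we get $\phi^\theta_{K^{\mu}}(0)=\phi^{\mu(\theta)}_K(0)=0$, and $K^{\mu}$ is a genuine smooth one-periodic Hamiltonian provided that, in addition to periodicity of $\mu'$, one of the following holds: $\mu(1)\in\mathbb Z$ (so that $K(\mu(0),\cdot)=K(\mu(1),\cdot)$ by one-periodicity of $K$); the image of $\mu$ lies in an interval on which $K$ is $\theta$-independent (so that $K^{\mu}(\theta,z)=\mu'(\theta)K_0(z)$ with $K_0$ autonomous); or $\mu'$ vanishes to infinite order at $0$ and $1$ (so that $K^{\mu}\equiv 0$ near the seam $\theta\equiv 0\equiv 1$).

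I would then interpolate in three stages, writing $K=K_0$ on the window $[0,2t_0]$ with $K_0$ autonomous. First, keeping $\mu_s(1)=1$, deform $\mu_s$ from the identity (at $s=1$, where $K^1=K$) to a reparametrization with $\int_0^1\mu_s'=1$ whose derivative is flat at the endpoints; here $\phi^1_{K^{\mu_s}}=\phi^1_K$ throughout, so the fixed point set $\{0\}$ is (uniformly) isolated and nothing moves. Then, keeping $\mu_s'$ seam-flat and nonnegative, shrink $\mu_s(1)$ monotonically from $1$ down to $2t_0$, so the image of $\mu_s$ retreats into $[0,2t_0]$; now $\phi^1_{K^{\mu_s}}=\phi^{\mu_s(1)}_K$ with $\mu_s(1)\in[2t_0,1]\subset(0,1]$, and $0$ is \emph{uniformly} isolated for all these maps because the argument establishing (K4) produces one neighbourhood of the origin containing no nonconstant periodic orbit of $X_K$ of period $\le 1$. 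Finally, since now $K^{\mu_s}(\theta,z)=\mu_s'(\theta)K_0(z)$, deform $\mu_s'$ (keeping $\int_0^1\mu_s'=2t_0$ and $\mu_s'\ge 0$) from the seam-flat bump to the constant function $2t_0$, ending at $K^{t_0}(\theta,z)=2t_0\,K_0(z)$, which is autonomous. After reparametrizing the $s$-interval to $[t_0,1]$ and smoothing the two joints, this yields a single family with (KS1) (one-periodicity, from the device above), (KS2) (uniform isolation of $\{0\}$ for $\phi^1_{K^s}$, from (K4)), (KS3) ($\phi^\theta_{K^s}(0)=0$), and (KS4) ($K^{t_0}$ autonomous).

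The hard part, and the reason this needs to be done in stages rather than by a single rescaling, is the tension between one-periodicity and autonomy. The autonomous target $2t_0\,K_0(z)$ is trivially one-periodic, but the naive family $c\mapsto\bigl(\theta\mapsto c\,K(c\theta,\cdot)\bigr)$ joining it to $K$ fails to be one-periodic whenever $c\in(2t_0,1)$, precisely because the flow still sees the genuinely time-dependent part of $K$ and $K(c,\cdot)\neq K(0,\cdot)$. The remedy is to route the deformation through reparametrizations whose derivative is flat at the seam while the flow is being compressed into the autonomous window, and to straighten $\mu'$ to a constant only afterwards, once $K^{\mu_s}$ has already collapsed to a $\theta$-dependent multiple of $K_0$. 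The remaining points — nonnegativity and the integral normalizations of $\mu_s'$ along the way, $C^\infty$-dependence on $s$ across the joints between stages, and the $s$-uniformity of the isolating neighbourhood — are routine bookkeeping.
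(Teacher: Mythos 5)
Your proposal is correct and follows essentially the same route as the paper: both compress the time-one flow of $K$ into its autonomous window via a reparametrized rescaling of the form $\mu'(\theta)\,K(\mu(\theta),z)$ (the paper's $K^s(\theta,z)=\lambda_s'(\theta)\,\bar K^s(\lambda_s(\theta),z)$ with $\bar K^s=sK(s\theta,z)$), keeping the derivative of the reparametrization flat at the seam while the genuinely time-dependent part of $K$ is still visible and relaxing this only once the image lies in $[0,2t_0]$, with (KS2)--(KS3) supplied by (K4) and the vanishing of $X_K$ at the origin, exactly as in the paper. The only (harmless) deviation is that your terminal Hamiltonian is $2t_0K_0$ rather than the paper's $t_0K_0$; this still satisfies (KS1)--(KS4), though for the subsequent verification of $(\hat H 2)$ one should then either require $4t_0\le 1$ or simply shrink $t_0$, since (K4) controls $\phi^t_K$ only for $t\in(0,1]$.
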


First let $\bar K^s = sK(s\theta, z)$. Then one can easily check that $\phi_K^s = \phi^1_{\bar K^s}$ because 
\[
	\frac{\partial}{\partial \theta} \phi^{s\theta}_K = sX_{K_{s\theta}} = X_{\bar K^s}.
\]
	This allows us to replaces the time-one Hamiltonian flow of $K$ by the time-s Hamiltonian flow of $K$. Since $K$ is autonomous for small time we can use this trick to replace our $\theta$ dependent Hamiltonian by one which is autonomous. Again, the interval on which $K$ is autonomous is $[0,2t_0]$. So we take $s\in [t_0,1]$ to get our path of Hamiltonians $\bar K^s$. This satisfies some of the desired conditions, but our new Hamiltonians are not one-periodic when $s\neq1$. This is a problem as it will not live on our local model  and we can't apply Lemma \ref{thm:lemma1}. So we address this next. We let $K^s(\theta,z) = \lambda_s'(\theta) \bar K^s(\lambda_s(\theta),z)$. Here we have a potentially different function $\lambda_s: [0,1] \to [0,1]$ for each $s$. We ask that the collection has the following properties 

\begin{itemize}	
	\item For  all $s\in [t_0,1]$, we have $\lambda_s$ is smooth in $s$ and $\theta$. 
	\item For all $s\in [t_0,1]$ we ask $\lambda_s'(0)=\lambda_s'(1)$ with $\lambda'_s>0$ otherwise, and $\int_0^1\lambda_s'(\theta) d\theta = 	1$ implying $\lambda_s(1)=1$. 
	\item Then we add conditions which depend on $s$. For $s\in[2t_0, 1]$ we ask that $\lambda_s'(0)=\lambda_s'(1)=0$ and for $s\in (t_0,2t_0)$ we relax this to earlier 		condition $\lambda_s'(0)= \lambda_s'(1)$ and we ask that $\lambda_{t_0}'=1$ the constant function.
\end{itemize}

 For any $s\in [t_0,1]$ it is easy to see that have not only that $\phi^1_{K^s}=  \phi^1_{\bar K^s}=\phi_K^s$, but we also have that 
\[
	\frac{\partial}{\partial \theta} \phi^{\lambda_s(\theta)}_{\bar K^s }= \lambda_s'(\theta)X_{\bar K^s(\lambda_s (\theta))}= X_{K^s} =	\frac{\partial}{\partial \theta} \phi^\theta_{K^s }.
\] 

Now we say a few words about proving the statements in Claim \ref{a nice K family}. Let's first take a look at (KS1); this condition is handled by the fact that $\lambda_s'(0)=\lambda_s'(1)$. First that $K^s$ is periodic from $[2t_0,1]$ and that is periodic from $[t_0,2t_0)$. This condition is handled in both cases in some sense by the condition that $\lambda_s'(0)=\lambda_s'(1)$. On  $[2t_0,1]$  we require that $\lambda_s'(0)=\lambda_s'(1)=0$ which makes our Hamiltonian periodic by making the Hamiltonian zero at time zero and at time one. On the interval $[t_0,2t_0]$ the Hamiltonian $K$ is autonomous so the only change that can occur to $K^s$ is the multiplication by the factor $\lambda_s'$ which we get from rescalling time. By requiring that  $\lambda_s'(0)=\lambda_s'(1)$, however, we get that it has the same scaling factor at time zero and time one and $K$ is autonomous so we have achieved something periodic.\\

Next we address (KS2) and (KS3). For $\bar K^s$ we know the fixed point set remains uniformly isolated because we are only flowing with $K$ up to time some time $s$ and by construction $0 \in B^{2n}$ is an isolated fixed point of $\phi^t_K$ for $t \in (0, 1]$. The flow of $K^s$ is following the flow of $\bar K^s$ except with a rescalling of time. The condition that $\lambda_s'>0$ except possibly when $t=0$ or $t=1$ ensures that this rescalling never has us standing still and a such we introduce no new fixed points which means that $0\in B^{2n}$ remains a uniformly isolated fixed point of $\phi^1_{K^s}$. Further more by the essentially the same reasoning we can see that $\phi^\theta_{K^s}(0)=0$ as we are following the flow of $K$ which maps $0 \in B^{2n}$ to itself for all time.\\*

Finally we address (KS4), which is a little more obvious than the proceeding points. Recalling that $K$ is autonomous from $t \in [0,2t_0]$ and that $K^s$ follows the flow of $K$ up to time $s$, it is simply $K$ multiplied by the scaling factor $\lambda_s'$. But we require that $\lambda_{t_0}'=1$. From this it follows that we are just flowing with $K$ from time zero to time $t_0$, and as a result we have an autonomous Hamiltonian. Lets call this autonomous replacement Hamiltonian $\hat K$ as we continue. This completes the proof of Claim \ref{a nice K family}.\\

Now drawing our attention to $Q+K^s$, condition (KS1) shows that $Q+K^^s$ is periodic, (KS2) gives us that $\phi^1_{Q+K^s}$ has $0 \in B^{2n}$ as a uniformly isolated fixed point for all $s \in [t_0,1]$, and (KS3) give us that $\phi^\theta_{Q+K^s}(0)=0$  for all $s,\theta \in [t_0,1]$. As such we may now apply (LFH1) and Lemma \ref{thm:lemma1} to the family $Q+K^s$ and we have the two following isomorphisms:

\begin{align*}
	HF_*^{loc}(\gamma,d(r-(Q+K))\wedge d\theta +\sigma ,f) \cong HF_*^{loc}(\gamma,d(r-(Q+\hat K))\wedge d\theta +\sigma ,f)\\
	 HF_*^{loc}(\phi^1_{Q+K}) \cong HF_*^{loc}(\phi^1_{Q+\hat K}).\qquad\qquad\qquad\qquad\qquad
\end{align*}

At this point we have finally shown ($\hat H$1) to be true, but we still need to prove condition ($\hat H$2). By construction, the time-one flow of $X_{\hat K}$ is the time $t_0$ flow of $X_K$ and the time-two flow of $X_{\hat K}$ is the time $2t_0$ flow of $X_K$, which shows ($\hat H$2) for the totally degenerate part. We next show that we can make a suitable replacement on the nondegenerate side.

\subsection{The nondegenerate Hamiltonian}
\label{nondegenerate H}

Here we modify the nondegenerate Hamiltonian $Q$ in order to finally prove condition ($\hat H2$). Since $Q$ is quadratic the time-one flow is just a symplectic linear transformation of $B^{2m}$ that has the origin as its only fixed point. We want to replace $Q$ by some autonomous $\hat Q$ such that $\phi^1_{Q}=\phi^1_{\hat Q}$, where $\phi^t_{\hat Q}$ has $0\in B^{2m}$ as an isolated fixed point for all $t\in (0,2)$.\\ 

Finding $\hat Q$ requires that we decompose the map $\phi^1_{Q}$ in terms of the eigenvalues of its linearized return map. It is helpful here to look at what $\phi^t_Q$ actually is. Our function $Q$ is a quadratic autonomous Hamiltonian which means we are solving a fairly simple first order ordinary differential equation when we compute the flow. The flow $\phi^t_Q(z)$ is equal to $\exp(Jd^2Qt)z$, where $d^2Q$ is the Hessian of $Q$ with respect to the $v$ coordinates, $J$ is the matrix representing multiplication by $i$ on $B^{2m}$, and the map $\exp: \mathfrak{sp}(2m) \to SP(2m)$ is the exponential map into the group of symplectic matrices. The matrix $Jd^2Q$ can be put in a normal form from a symplectic linear change of coordinates. This is a totally standard normal form, the details of which can be seen in \cite{Ar}. After this change of coordinates the matrix $Jd^2Q$ can be written in a block form. As such, the exponential of $Jd^2Q$ also has a block form and we can write $\phi^1_Q$ as the product of a Hamiltonian loop $\phi$ and the direct sum $\Phi_e \oplus \Phi_h$, where $\Phi_h$ is a Hamiltonian diffeomorphism for a Hamiltonian $Q_h$which is hyperbolic with real or complex eigenvalues. Here, $\Phi_e$ is a Hamiltonian diffeomorphism for some Hamiltonian $Q_e$ which is elliptic and up to conjugation it decomposes as a direct sum of short rotations $R_\theta$ where $\theta \in (-\pi,\pi)$. In particular, we can see that $\Phi_e\oplus\Phi_h$ is the time-one flow of $(\exp(Jd^2Q_et) , \exp(Jd^2Q_ht))$ for which $0\in B^{2m}$ remains uniformly isolated for all $t\in (0,1]$. We take $\hat Q$ to be the Hamiltonian $Q_e\oplus Q_h$. Furthermore by virtue of how $\hat Q$ is constructed, the elliptic part is less than a half rotation, and so we actually have that the origin is an isolated fixed point for all $t\in (0,2)$. Now finally we have $\phi^1_Q=\phi^1_{\hat Q}$ where both $Q$ and $\hat Q$ are autonomous , and thus one -periodic so we may apply (LFH2) and Lemma \ref{thm:lemma2} to see that

\begin{align*}
	HF_*^{loc}(\gamma,d(r-(Q+\hat K))\wedge d\theta +\sigma ,f) \cong HF_{*+s}^{loc}(\gamma,d(r-(\hat Q+\hat K))\wedge d\theta +\sigma ,f)\\
	HF_*^{loc}(\phi^1_{Q+\hat K}) \cong HF_{*+s}^{loc}(\phi^1_{\hat Q+\hat K}).\qquad\qquad\qquad\qquad\qquad
\end{align*}

Now the Hamiltonian $\hat Q+\hat K$ is autonomous so $(\hat H1)$ is still preserved, and we now have that $0\in B^{2n}$ is an isolated fixed point for its time$-t$ flow for $t\in (0,2)$ for the map $\phi^t_{\hat Q+ \hat K}$ which finally proves  $(\hat H2)$.\\

We no longer need to focus on the degenerate parts and non-degenerate parts separately. As such we will refer to $\hat Q+\hat K$ simply as $\hat H$ from this point forward. The only thing we still need to do in order to prove Proposition \ref{autonomous H} is to note we can take $\hat H(0)=0$. Our Hamiltonian is autonomous so $\hat H(0)=c$ and if we replace $\hat H$ by $\hat H-c$ we have achieved this without altering our Hamiltonian vector field or its associated symplectic form $\omega_{\hat H} =d(r-\hat H)\wedge d\theta +\sigma.$ 
\begin{flushright}
$\Box$
\end{flushright}

\begin{bibdiv}
\begin{biblist}*{labels={alphabetic}}

\bib{AGKM}{article}{
   author={Abreu, M.},
   author={Gutt, J.},
   author={Kang, J.}
   author={Macarini, L.}
   title={Two closed orbits for non-degenerate Reeb flows},
   journal={Preprint 2019; available at  \url{https://arxiv.org/abs/1903.06523}; to appear in Math. Proc. Cambridge Philos. Soc.} 
}

\bib{Ar}{book}{
   author={Arnol\cprime d, V. I.},
   author={Kozlov, V. V.},
   author={Ne\u{\i}shtadt, A. I.},
   title={Dynamical systems. III},
   series={Encyclopaedia of Mathematical Sciences},
   volume={3},
   note={Translated from the Russian by A. Iacob},
   publisher={Springer-Verlag, Berlin},
   date={1988},
   pages={xiv+291},
   isbn={3-540-17002-2},
   review={\MR{923953}},
   doi={10.1007/978-3-642-61551-1},
}

\bib{BO}{article}{
   author={Bourgeois, F.},
   author={Oancea, A.},
   title={$S^1$-equivariant symplectic homology and linearized contact
   homology},
   journal={Int. Math. Res. Not. IMRN},
   date={2017},
   number={13},
   pages={3849--3937},
   issn={1073-7928},
   review={\MR{3671507}},
   doi={10.1093/imrn/rnw029},
}

\bib{EKP}{article}{
   author={Eliashberg, Y.},
   author={Kim, S. S.},
   author={Polterovich, L.},
   title={Geometry of contact transformations and domains: orderability
   versus squeezing},
   journal={Geom. Topol.},
   volume={10},
   date={2006},
   pages={1635--1747},
   issn={1465-3060},
   review={\MR{2284048}},
   doi={10.2140/gt.2006.10.1635},
}

\bib{F14}{article}{
   author={Floer, A.},
   title={Witten's complex and infinite-dimensional Morse theory},
   journal={J. Differential Geom.},
   volume={30},
   date={1989},
   number={1},
   pages={207--221},
   issn={0022-040X},
   review={\MR{1001276}},
}

\bib{F15}{article}{
   author={Floer, A.},
   title={Symplectic fixed points and holomorphic spheres},
   journal={Comm. Math. Phys.},
   volume={120},
   date={1989},
   number={4},
   pages={575--611},
   issn={0010-3616},
   review={\MR{987770}},
}

\bib{Gi:CC}{article}{
   author={Ginzburg, V. L.},
   title={The Conley conjecture},
   journal={Ann. of Math. (2)},
   volume={172},
   date={2010},
   number={2},
   pages={1127--1180},
   issn={0003-486X},
   review={\MR{2680488}},
   doi={10.4007/annals.2010.172.1129},
}

\bib{GG:gap}{article}{
   author={Ginzburg, V. L.},
   author={G\"{u}rel, B. Z.},
   title={Local Floer homology and the action gap},
   journal={J. Symplectic Geom.},
   volume={8},
   date={2010},
   number={3},
   pages={323--357},
   issn={1527-5256},
   review={\MR{2684510}},
}

\bib{GG:CCAB}{article}{
   author={Ginzburg, V. L.},
   author={G\"{u}rel, B. Z.},
   title={The Conley conjecture and beyond},
   journal={Arnold Math. J.},
   volume={1},
   date={2015},
   number={3},
   pages={299--337},
   issn={2199-6792},
   review={\MR{3390228}},
   doi={10.1007/s40598-015-0017-3},
}

\bib{GG:convex}{article}{
   author={Ginzburg, V. L.},
   author={G\"{u}rel, B. Z.},
   title={Lusternik-Schnirelmann theory and closed Reeb orbits},
   journal={Math. Z.},
   volume={295},
   date={2020},
   number={1-2},
   pages={515--582},
   issn={0025-5874},
   review={\MR{4100023}},
   doi={10.1007/s00209-019-02361-2},
}

\bib{GGM1}{article}{
   author={Ginzburg, V. L.},
   author={G\"{u}rel, B. Z.},
   author={Macarini, L.},
   title={Multiplicity of closed Reeb orbits on prequantization bundles},
   journal={Israel J. Math.},
   volume={228},
   date={2018},
   number={1},
   pages={407--453},
   issn={0021-2172},
   review={\MR{3874849}},
   doi={10.1007/s11856-018-1769-y},
}

\bib{GGM}{article}{
   author={Ginzburg, V. L.},
   author={G\"{u}rel, B. Z.},
   author={Macarini, L.},
   title={On the Conley conjecture for Reeb flows},
   journal={Internat. J. Math.},
   volume={26},
   date={2015},
   number={7},
   pages={1550047, 22},
   issn={0129-167X},
   review={\MR{3357036}},
   doi={10.1142/S0129167X15500470},
}

\bib{GHHM}{article}{
   author={Ginzburg, V. L.},
   author={Hein, D.},
   author={Hryniewicz, U. L.},
   author={Macarini, L.},
   title={Closed Reeb orbits on the sphere and symplectically degenerate
   maxima},
   journal={Acta Math. Vietnam.},
   volume={38},
   date={2013},
   number={1},
   pages={55--78},
   issn={0251-4184},
   review={\MR{3089878}},
   doi={10.1007/s40306-012-0002-z},
}

\bib{GM}{article}{ 
   author={Ginzburg, V. L.}
   author={Macarini, L.}
   title={Dynamical convexity and closed orbits on symmetric spheres}
   journal={Preprint 2019; available at  \url{https://arxiv.org/abs/1912.04882}; to appear in Duke Math. J} 
}
   
\bib{GS}{article}{
   author={Ginzburg, V. L.},
   author={Shon, J.},
   title={On the filtered symplectic homology of prequantization bundles},
   journal={Internat. J. Math.},
   volume={29},
   date={2018},
   number={11},
   pages={1850071, 35},
   issn={0129-167X},
   review={\MR{3871720}},
   doi={10.1142/S0129167X18500714},
}

\bib{HHM:local Morse}{article}{
   author={Hein, D.},
   author={Hryniewicz, U.},
   author={Macarini, L.},
   title={Transversality for local Morse homology with symmetries and
   applications},
   journal={Math. Z.},
   volume={293},
   date={2019},
   number={3-4},
   pages={1513--1599},
   issn={0025-5874},
   review={\MR{4024596}},
   doi={10.1007/s00209-019-02295-9},
}

\bib{Hi}{article}{
   author={Hingston, N.},
   title={Subharmonic solutions of Hamiltonian equations on tori},
   journal={Ann. of Math. (2)},
   volume={170},
   date={2009},
   number={2},
   pages={529--560},
   issn={0003-486X},
   review={\MR{2552101}},
   doi={10.4007/annals.2009.170.529},
}

\bib{HM:contacthom}{article}{
   author={Hryniewicz, U. L.},
   author={Macarini, L.},
   title={Local contact homology and applications},
   journal={J. Topol. Anal.},
   volume={7},
   date={2015},
   number={2},
   pages={167--238},
   issn={1793-5253},
   review={\MR{3326300}},
   doi={10.1142/S1793525315500119},
}

\bib{HZ}{book}{
   author={Hofer, H.},
   author={Zehnder, E.},
   title={Symplectic invariants and Hamiltonian dynamics},
   series={Birkh\"{a}user Advanced Texts: Basler Lehrb\"{u}cher. [Birkh\"{a}user
   Advanced Texts: Basel Textbooks]},
   publisher={Birkh\"{a}user Verlag, Basel},
   date={1994},
   pages={xiv+341},
   isbn={3-7643-5066-0},
   review={\MR{1306732}},
   doi={10.1007/978-3-0348-8540-9},
}

\bib{Lo}{article}{
   author={Long, Y.},
   title={Index iteration theory for symplectic paths with applications to
   nonlinear Hamiltonian systems},
   conference={
      title={Proceedings of the International Congress of Mathematicians,
      Vol. II},
      address={Beijing},
      date={2002},
   },
   book={
      publisher={Higher Ed. Press, Beijing},
   },
   date={2002},
   pages={303--313},
   review={\MR{1957042}},
   doi={10.1007/978-3-0348-8175-3},
}

\bib{Mc}{article}{
   author={McLean, M.},
   title={Local Floer homology and infinitely many simple Reeb orbits},
   journal={Algebr. Geom. Topol.},
   volume={12},
   date={2012},
   number={4},
   pages={1901--1923},
   issn={1472-2747},
   review={\MR{2994824}},
   doi={10.2140/agt.2012.12.1901},
}

\bib{Oa}{article}{
   author={Oancea, A.},
   title={The K\"{u}nneth formula in Floer homology for manifolds with
   restricted contact type boundary},
   journal={Math. Ann.},
   volume={334},
   date={2006},
   number={1},
   pages={65--89},
   issn={0025-5831},
   review={\MR{2208949}},
   doi={10.1007/s00208-005-0700-0},
}

\bib{Po}{article}{
   author={Po\'{z}niak, M.},
   title={Floer homology, Novikov rings and clean intersections},
   conference={
      title={Northern California Symplectic Geometry Seminar},
   },
   book={
      series={Amer. Math. Soc. Transl. Ser. 2},
      volume={196},
      publisher={Amer. Math. Soc., Providence, RI},
   },
   date={1999},
   pages={119--181},
   review={\MR{1736217}},
   doi={10.1090/trans2/196/08},
}
	
\bib{Sa1}{article}{
   author={Salamon, D.},
   title={Morse theory, the Conley index and Floer homology},
   journal={Bull. London Math. Soc.},
   volume={22},
   date={1990},
   number={2},
   pages={113--140},
   issn={0024-6093},
   review={\MR{1045282}},
   doi={10.1112/blms/22.2.113},
}

\bib{Sa2}{article}{
   author={Salamon, D.},
   title={Lectures on Floer homology},
   conference={
      title={Symplectic geometry and topology},
      address={Park City, UT},
      date={1997},
   },
   book={
      series={IAS/Park City Math. Ser.},
      volume={7},
      publisher={Amer. Math. Soc., Providence, RI},
   },
   date={1999},
   pages={143--229},
   review={\MR{1702944}},
   doi={10.1016/S0165-2427(99)00127-0},
}

\bib{SZ}{article}{
   author={Salamon, D.},
   author={Zehnder, E.},
   title={Morse theory for periodic solutions of Hamiltonian systems and the
   Maslov index},
   journal={Comm. Pure Appl. Math.},
   volume={45},
   date={1992},
   number={10},
   pages={1303--1360},
   issn={0010-3640},
   review={\MR{1181727}},
   doi={10.1002/cpa.3160451004},
}

\bib{Vi:GAFA}{article}{
   author={Viterbo, C.},
   title={Functors and computations in Floer homology with applications. I},
   journal={Geom. Funct. Anal.},
   volume={9},
   date={1999},
   number={5},
   pages={985--1033},
   issn={1016-443X},
   review={\MR{1726235}},
   doi={10.1007/s000390050106},
}

\end{biblist}
\end{bibdiv}

\end{document}